\documentclass[12pt]{amsart}
\usepackage{amscd}
\usepackage{latexsym}
\usepackage{amssymb}

\usepackage{ytableau}
\usepackage{hyperref}

\allowdisplaybreaks

\newtheorem{theorem}{Theorem}[section]
\newtheorem{proposition}[theorem]{Proposition}
\newtheorem{prop}[theorem]{Proposition}
\newtheorem{lemma}[theorem]{Lemma}
\newtheorem{notation}[theorem]{Notation}
\newtheorem{definition}[theorem]{Definition}

\newtheorem{corollary}[theorem]{Corollary}
\newtheorem{remark}[theorem]{Remark}

\newtheorem{example}[theorem]{Example}

\begin{document}

%
%

\title[Thick representations of the symmetric group]{\small The classification of thick representations of the symmetric group}

\author{Kazunori NAKAMOTO, Shingo OKUYAMA, and Yasuhiro OMODA}
\address{Center for Medical Education and Sciences, Faculty of Medicine, 
University of Yamanashi}
\email{nakamoto@yamanashi.ac.jp}
\address{
National Institute of Technology, 
Kagawa College}
\email{okuyama@di.kagawa-nct.ac.jp}
\address{
National Institute of Technology, 
Akashi College}
\email{omoda@akashi.ac.jp}

\thanks{The authors were partially supported by JSPS KAKENHI Grant Numbers JP20K03509 and JP24K06686. } 

\subjclass[2020]{Primary 20C30; Secondary 05E10.}

\keywords{Thick representation, Dense representation, Symmetric group.} 

\date{20 March 2026 (ver. 1.0.0)}

\begin{abstract} 
We give the classification of thick representations and dense representations of the symmetric group over a field of characteristic zero.   
\end{abstract}

%
%

\maketitle

%
%

\section{Introduction}\label{section:Introduction} 

Let $V$ be a  finite-dimensional vector space over a field $k$. 
For a group representation $\rho : G \to {\rm GL}(V)$, we say that $\rho$ is 
{\it thick} if for any subspaces $V_1, V_2$ of $V$ with $\dim_k V_1+\dim_k V_2=\dim_k V$, 
there exists $g\in G$ such that $(\rho(g)V_1)\cap V_2=0$. 
We also say that $\rho$ is {\it dense} if the exterior representation 
$\bigwedge^m\rho : G \to {\rm GL}(\bigwedge^m V)$ is irreducible for any $0 < m < \dim_k V$ (Definition~\ref{def:thickanddense}).  
If $\rho$ is dense, then it is thick; and if it is thick, then it is irreducible (Proposition~\ref{prop:densethickirreducible}).

Regarding the classification problem of thick representations, 
in the case of finite-dimensional irreducible representations over ${\Bbb C}$ of connected 
complex semi-simple Lie groups, the notion of 
thickness fits well within the framework of weight theory and the classification has been successfully achieved (Theorems~\ref{th:WMF}, \ref{th:listofthick}, \ref{th:listofdense}, and \cite[Theorem~3.12]{NO2}).  
In this paper, we discuss the classification of thick representations and dense representations of the symmetric group over a field of characteristic zero, a prototypical example among finite groups.     

Let $S_n$ be the symmetric group of degree $n$. 
For a partition  $\lambda$ of $n$, we denote by $V_{\lambda}$ the irreducible representation of $S_n$ 
over a field of characteristic zero corresponding to $\lambda$.  
The main theorems of this paper are the following: 

\begin{theorem}
\label{th:main1}
For a finite-dimensional irreducible 
representation $\rho$ of $S_n$ over a field of characteristic zero, 
$\rho$ is thick if and only if it is dense. 
\end{theorem}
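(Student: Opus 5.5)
Since dense implies thick (Proposition~\ref{prop:densethickirreducible}), only the converse needs proof: every thick irreducible $V_\lambda$ is dense. Rather than prove this abstractly, I will classify both classes explicitly and check that the two lists coincide. The expected common list consists of:
\begin{itemize}
\item the trivial representation $V_{(n)}$ and the sign representation $V_{(1^n)}$ (dimension one, so vacuously dense);
\item the standard representation $V_{(n-1,1)}$ and its sign twist $V_{(2,1^{n-2})}$;
\item finitely many small exceptions, such as $V_{(2,2)}$ for $n=4$ (two-dimensional), and possibly a few cases for $n\le 6$.
\end{itemize}

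\emph{Density of the listed representations.} For $V_{(n-1,1)}$ we have $\bigwedge^m V_{(n-1,1)}\cong V_{(n-m,1^m)}$, which is irreducible for all $0<m<n-1$; so the standard representation is dense. Twisting by the sign character preserves density, since $\bigwedge^m(\rho\otimes\mathrm{sgn})=(\bigwedge^m\rho)\otimes\mathrm{sgn}^m$. The small exceptions I will verify directly by character computations.

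\emph{Non-thickness of all other $V_\lambda$.} This is the hard direction. I will use a criterion for thickness: $\rho$ is thick only if, for each $m$, there are no subspaces $V_1,V_2$ of complementary dimension with $gV_1\cap V_2\neq 0$ for all $g$. A convenient sufficient condition for non-thickness is the following. Suppose some $\bigwedge^m V$ contains a proper nonzero $G$-submodule $W$, and some decomposable vector $v_1\wedge\cdots\wedge v_m$ pairs to zero with the whole orbit of $\bigwedge^{\dim V-m}$-decomposables. Then $V$ is not thick.

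More practically, I will use restriction to a subgroup and weight-type arguments as in the Lie group case. Take $H=S_{n-1}$, or a Young subgroup, and exhibit subspaces $V_1,V_2$ built from isotypic components of the restriction, for instance by branching (removing boxes). The aim is that every $g\in S_n$ moves $V_1$ into something meeting $V_2$. Concretely, I will first try $m=2$. For $\lambda$ not in the list, I will show $\bigwedge^2 V_\lambda$ is reducible in a strong enough way to produce a $2$-dimensional $V_1$ and a codimension-$2$ subspace $V_2$ with $gV_1\cap V_2\ne0$ for all $g$. Equivalently, $\langle gv_1\wedge gv_2,\ \omega\rangle=0$ for all $g$, where $\omega$ ranges over the decomposable elements of $\bigwedge^{d-2}V\cong(\bigwedge^2V)^*\otimes\det$. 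So it suffices to find a nonzero decomposable tensor lying in a proper $S_n$-stable subspace of $\bigwedge^2 V$ together with a decomposable tensor in its annihilator.

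I expect the main obstacle to be exactly this step. Reducibility of $\bigwedge^2V_\lambda$ (which holds for $\lambda$ outside the list, via Littlewood--Richardson/plethysm) does not by itself give \emph{decomposable} vectors in the right places. My first attempt will use a two-dimensional $V_1$ spanned by vectors fixed by a large Young subgroup, such as Specht polytabloids symmetric under $S_{n-2}$. The orbit of such vectors is then small and combinatorially controlled, and I will choose $V_2$ as the annihilator-type codimension-$2$ subspace containing all the required intersections.

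The remaining small $n$ will be handled by direct computation.
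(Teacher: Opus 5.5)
\textbf{Verdict: there is a genuine gap in the hard direction.}

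Your reduction is the right one, and it is the paper's. Dense implies thick in general, so it suffices to exhibit a list of dense $V_\lambda$ and prove that every $V_\lambda$ off the list is not thick. On the easy half, you must fix the list exactly. The small exceptions are $V_{(2^2)}$ and also $V_{(2^3)}$ and $V_{(3^2)}$ of $S_6$. The last two are dense because they are pullbacks of $V_{(5,1)}$ and $V_{(2,1^4)}$ along an outer automorphism of $S_6$. If you leave them off, your hard direction is false for them.

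The gap is the hard direction. You give a plan whose key step you yourself leave open, and the concrete tactic you propose fails.
\begin{itemize}
\item By the branching rule, $V_\lambda$ has nonzero $S_{n-2}$-fixed vectors only when $\lambda_1\ge n-2$. A $2$-dimensional fixed space occurs only for $\lambda=(n-1,1)$, which is already on the list. So the $2$-plane $V_1$ you want does not exist for essentially every $\lambda$ you need to treat.
\item Reducibility of $\bigwedge^2V_\lambda$ does not produce a realizable invariant subspace with a decomposable annihilator (Proposition~\ref{prop:condofm-thick}), as you note.
\item Nothing supports the expectation that $m=2$ works uniformly. The paper proves non-$2$-thickness only for $(n-2,2)$ and $(2,2,1^{n-4})$. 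For $(n-2,1,1)=\bigwedge^2V_{(n-1,1)}$ it proves only non-$3$-thickness, via Theorem~\ref{th:exteriorsymmetric}. Elsewhere it obtains non-$m$-thickness only for $m$ in a middle range.
\end{itemize}

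The missing idea is to prove non-$(i,j)$-thickness with $i,j$ large and then apply Proposition~\ref{prop:ijthick}. This rests on a combinatorial property of Specht polynomials: $f(y)$ depends, up to sign, only on the column sets of $y$, and it is unchanged by swapping two columns of equal length.

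Take $W_1$ to be spanned by the $f(y)$ whose first column contains $1,2$; for three rows with $m_3\ge 3$, take those whose first column equals $\{1,2,3\}$. Then $\sigma(W_1)$ is the analogous span for $\sigma(1),\sigma(2)$ (or $\sigma(1),\sigma(2),\sigma(3)$). It then suffices to choose $W_2$ spanned by a few tableaux such that every translate $\sigma(W_1)$ contains one of them. The paper does this as follows:
\begin{itemize}
\item for two rows, $W_1$ plus about $\lfloor (n-1)/2\rfloor$ extra tableaux;
\item for $(m_1,m_2,1)$ and $(m_1,m_2,2)$, an edge-covering family of $3$-subsets of $\{1,\dots,n\}$;
\item for at least four rows, $W_2=W_1$, after replacing $\lambda$ by ${}^t\lambda$ if necessary so that $2\dim W_1\le \dim V_\lambda$.
\end{itemize}
With this choice $\sigma(W_1)\cap W_2\neq 0$ holds automatically. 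The actual work is the hook-length estimate $\dim W_1+\dim W_2\le\dim V_\lambda$, plus a handful of small shapes checked by hand. Your orbit-of-a-$2$-plane approach contains no substitute for this mechanism.
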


\begin{theorem}
\label{th:main2}
Let ${\Bbb K}$ be a field of characteristic zero. 
The thick (or dense) representations of the symmetric group $S_n$ over ${\Bbb K}$ are those on the following list: 
\begin{enumerate}
\item\label{item:Snrep1} the trivial representation $V_{(n)}$ of $S_n$ for $n\ge 1$, 
\item\label{item:Snrep2} the sign representation $V_{(1^n)}$ of $S_n$ for $n\ge 2$, 
\item\label{item:Snrep3} the standard representation $V_{(n-1, 1)}$ of $S_n$ for $n \ge 3$, 
\item\label{item:Snrep4} the product of the standard and sign representation $V_{(2, 1^{n-2})}$ of $S_n$ for $n \ge 4$, 
\item\label{item:Snrep5} the $2$-dimensional irreducible representation $V_{(2^2)}$ of $S_4$,  
\item\label{item:Snrep6} the $5$-dimensional irreducible representations $V_{(2^3)}$ and $V_{(3^2)}$ of $S_6$.     
\end{enumerate} 
\end{theorem}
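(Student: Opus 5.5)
Since thick representations are irreducible (Proposition~\ref{prop:densethickirreducible}) and dense implies thick, Theorems~\ref{th:main1} and \ref{th:main2} together reduce to two claims. First, every representation on the list is dense. Second, every irreducible thick $V_\lambda$ appears on the list. I will also use the fact that $V_\lambda$ is defined over $\mathbb{Q}$ and absolutely irreducible. This lets me work over an algebraically closed field such as $\mathbb{C}$. Irreducibility of $\bigwedge^m V_\lambda$ is invariant under extension of scalars because characters are rational. Thickness can be handled by a base-change argument: a non-thick pair $(V_1,V_2)$ gives a nonzero polynomial condition, and one can choose rational or generic points.

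For the positive direction I treat the list case by case. Items (1) and (2) have dimension $\le 1$, so they are trivially dense. For (3), $\bigwedge^m V_{(n-1,1)} \cong V_{(n-m,1^m)}$ is irreducible, which is the classical hook computation. Item (4) follows by tensoring (3) with the sign representation. For (5), $\dim V_{(2^2)}=2$ and $m=1$ is just irreducibility. For (6) I compute characters. $\bigwedge^2$ and $\bigwedge^3$ of the $5$-dimensional $V_{(3^2)}$ of $S_6$ have dimension $10$, and they must be checked to be irreducible. Here the outer automorphism of $S_6$ exchanges $V_{(3^2)}$ with $V_{(3,1^3)}$ up to sign-twists, so this follows from the hook case via that automorphism. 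The case $\bigwedge^4\cong \bigwedge^1\otimes\det$ is automatic.

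For the negative direction, given $\lambda$ not on the list, I produce explicit $V_1,V_2$ with $\dim V_1+\dim V_2=\dim V_\lambda$ and $gV_1\cap V_2\neq 0$ for all $g$. The most usable criterion is the following. If $\bigwedge^m V$ has a proper invariant subspace $W$ whose annihilator in $\bigwedge^{d-m}V$ contains a nonzero decomposable vector, then $V$ is not thick. I expect an earlier result of the paper to phrase thickness as follows: for every decomposable $v_1\wedge\cdots\wedge v_m$ and $w_1\wedge\cdots\wedge w_{d-m}$, the span of $G\cdot(v_1\wedge\cdots\wedge v_m)$ pairs nontrivially with $w$. Concretely, I will use $m=1$ or $m=2$ with a restriction argument. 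Let $V=V_\lambda$ and let $H\subset S_n$ be a subgroup. If some vector $v$ has $\dim \mathrm{span}(S_n v)$ small, its span is a proper subspace and I want it to be all of $V$. Instead, I plan to use the pair $V_1=\langle v\rangle$ and $V_2$ a hyperplane. Thickness for $m=1$ is just irreducibility, so I need $m\ge2$. The better choice is a $2$-dimensional $V_1$ spanned by two vectors related by a transposition $\tau$. If $V_1=\langle v,\tau v\rangle$, then $g V_1$ always contains vectors of the form $gv-g\tau g^{-1}(gv)$, which are $(-1)$-eigenvectors of the transposition $g\tau g^{-1}$. The union of the $(-1)$-eigenspaces $E_t$ of all transpositions $t$ is a finite union of subspaces of dimension $a=\dim E_t$. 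Choose $V_2$ of codimension $2$ meeting every $E_t$ nontrivially. This is possible whenever $a\ge 3$, since a generic codimension-$2$ subspace meets any subspace of dimension $\ge3$. Then $gV_1\cap V_2\supseteq gV_1\cap E_{g\tau g^{-1}}\cap V_2$. To make this nonzero, I instead take $V_1=E_\tau$ itself and $V_2$ of codimension $a$. This needs $V_2$ to meet each $E_t$, and a generic codimension-$a$ subspace is transverse to an $a$-dimensional one, so I must choose $V_2$ cleverly, for example $V_2\supseteq$ a fixed vector from each $E_t$. There are $\binom n2$ such vectors, so this is valid when $\binom n2\le \dim V - a$ and the chosen vectors are independent enough.

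The main obstacle is making this counting work uniformly for all $\lambda$ outside the list. The quantity $a=\dim E_\tau=(\dim V_\lambda-\chi_\lambda(\tau))/2$ is known from the content formula. I expect the inequality $\binom n2+a\le\dim V_\lambda$, or a variant using $3$-cycles or double transpositions, to hold for all $\lambda$ except hooks, near-hooks, and finitely many small cases. I will then treat the small $n$ (say $n\le 8$) by direct character computation, exhibiting a reducible $\bigwedge^m$. For the two-row and two-column shapes I will use a specially chosen Young-subgroup invariant. Matching the exceptional list exactly at small $n$, especially $(2^2)$, $(3^2)$ and $(2^3)$ surviving, is where I expect the delicate casework.
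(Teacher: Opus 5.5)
The reduction and the positive direction match the paper, but the negative direction has two gaps: the small-case method proves the wrong property, and the key counting inequality fails on infinite families for which you give no argument.

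\textbf{Positive direction.} You use the paper's ingredients: hooks via $\bigwedge^r V_{(n-1,1)}\cong V_{(n-r,1^r)}$, the sign twist, Corollary~\ref{cor:whendimle3} for $V_{(2^2)}$, and the outer automorphism $\varphi$ of $S_6$. There is one slip. $V_{(3^2)}$ is $5$-dimensional and $V_{(3,1^3)}$ is $10$-dimensional, so $\varphi$ cannot exchange them. What holds is $\varphi^{\ast}V_{(5,1)}\cong V_{(2^3)}$ and $\varphi^{\ast}V_{(2,1^4)}\cong V_{(3^2)}$. Hence each $\bigwedge^m V_{(3^2)}\cong\varphi^{\ast}\bigwedge^m V_{(2,1^4)}$ is the pullback of an irreducible hook representation.

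\textbf{First gap: small cases.} For small $n$ you plan to exhibit a reducible $\bigwedge^m V_\lambda$. That only shows $V_\lambda$ is not dense. The implication runs dense $\Rightarrow$ thick, not conversely, and the coincidence of the two notions for $S_n$ is exactly what is being proved. So a reducible exterior power says nothing about thickness. Every $\lambda$ off the list needs an explicit pair $W_1,W_2$ with $\sigma(W_1)\cap W_2\neq 0$ for all $\sigma\in S_n$ and $\dim W_1+\dim W_2\le\dim V_\lambda$. The paper builds such pairs even for small shapes such as $(4,4)$, $(3,2,2)$, $(3,3,2)$, $(4,4,2)$, $(3,3,3)$. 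Also, your $\bigwedge^m$-criterion omits that the invariant subspace $W$ must itself contain a decomposable vector (compare Proposition~\ref{prop:condofm-thick}).

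\textbf{Second gap: the counting inequality.} The inequality $\binom{n}{2}+a\le\dim V_\lambda$ is only conjectured, and the cases you set aside include infinite families off the list.
\begin{itemize}
\item You exclude ``hooks'', but the hooks $(n-k,1^k)$ with $2\le k\le n-3$ are not on the list and must be shown non-thick.
\item For $(n-2,2)$, $(n-2,1,1)$ and their conjugates the count fails whichever eigenspace you use. Already $\dim V_{(n-2,2)}=n(n-3)/2$ and $\dim V_{(n-2,1,1)}=(n-1)(n-2)/2$ are below $\binom{n}{2}$.
\item With your fixed choice $a=\dim E^-_\tau$ it also fails for every $(2^3,1^{n-6})$, where $\dim V_\lambda-a=(n-2)(n-5)/2$. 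Taking the smaller of $E^{\pm}_\tau$ repairs this one.
\end{itemize}
``A specially chosen Young-subgroup invariant'' is not an argument, and it would not reach the hooks anyway.

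The paper handles these families as follows.
\begin{itemize}
\item $(n-2,2)$: an explicit $2$-dimensional $W_2=\langle f_{24},f_{34}\rangle$ and a Garnir-relation case check (Proposition~\ref{prop:okuyama-omoda}).
\item $(n-2,1,1)\cong\bigwedge^2V_{(n-1,1)}$: Theorem~\ref{th:exteriorsymmetric}, which in fact disposes of all hooks $\bigwedge^kV_{(n-1,1)}$ with $2\le k\le n-3$.
\item Conjugate shapes: the sign twist.
\end{itemize}

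\textbf{Relation to the paper's construction.} Your eigenspace idea is the paper's starting point. The span of tableaux with $1,2$ in the first column is $E^-_{(12)}$, of dimension $\sharp S^{\rm st}_{\lambda/(1,1)}$, and $\sigma$ carries it to $E^-_{\sigma(12)\sigma^{-1}}$. But one vector per transposition is too wasteful, and the paper economizes on $W_2$.
\begin{itemize}
\item At least four rows and four columns: pass to ${}^t\lambda$ if needed so that $2\dim E^-_{(12)}\le\dim V_\lambda$. Then $E^-_{(12)}$ already meets every $E^-_t$ through a tableau whose first column contains both pairs, so $W_2=W_1$.
\item Two rows: $W_2\supseteq W_1$ already handles the transpositions disjoint from $\{1,2\}$, so only $[(n-1)/2]$ extra tableaux are needed.
\item Three rows: one tableau with first column $\{i,j,k\}$ lies in three eigenspaces at once, so $F(n)\le(n^2-4)/4$ triples covering all pairs suffice. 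For $m_3\ge 3$ the paper uses $W_1=\langle T'_{1,2,3}\rangle$ instead.
\end{itemize}
Without such savings, or separate arguments for the near-hook families, your count does not close.
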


In \cite{NO5}, we announced the main theorems, and the present paper is the detailed version of \cite{NO5}. 

\bigskip 

The classification of thick (or dense) representations can be extended in future work to the classification of $(i, j)$-thick (or $(i, j)$-dense) representations of the symmetric group 
(for the definitions of $(i, j)$-thickness and $(i, j)$-denseness, 
see Definitions~\ref{def:ijthick} and~\ref{def:ijdense}). 
This problem reduces to 
determining the thick diagram $T(\rho)$ and the dense diagram $D(\rho)$ 
for each irreducible representation $\rho$ of the symmetric group (see Definitions~\ref{def:thickdiagram}). 
However, a detailed investigation is beyond the scope of the present paper. Instead, we focus on the classification of thick representations 
and dense representations of the symmetric group, and merely list several hints toward a future determination of the thick diagram $T(\rho)$ for each irreducible representation $\rho$ of the symmetric group; a more refined analysis is left for future work.

Throughout this paper, ${\Bbb K}$ denotes a field of characteristic zero. For a subset $S$ of a vector space $V$ over ${\Bbb K}$, we denote by $\langle S \rangle$ 
the subspace of $V$ spanned by $S$. 
For a partition $\lambda$ of a positive integer $n$, 
let ${}^t \lambda$ denote the conjugate partition to $\lambda$.   
In other words, ${}^t \lambda$ is defined by interchanging the rows and columns of $\lambda$ as a Young diagram.  

The organization of this paper is as follows: in Section~\ref{section:Preliminaries}, we review thick representations and dense representations. We also introduce the  classification of thick representations and dense representations over ${\Bbb C}$ of connected complex simple Lie groups.  In Section~\ref{section:Specht}, we review Specht modules in order to treat irreducible representations of the symmetric group concretely. 
In Section~\ref{section:denserepresentations}, we classify dense representations of the symmetric group.   
In Section~\ref{section:thickrepresentations}, we give an outline of the proof of 
Theorems~\ref{th:main1} and \ref{th:main2}.   
In Sections~\ref{section:s=2}--\ref{section:sge4}, 
we divide the proof into three cases. 
More precisely, Section~\ref{section:s=2} is devoted to the case $\lambda=(m_1, m_2)$, Section~\ref{section:s=3} to the case $\lambda=(m_1, m_2, m_3)$, and Section~\ref{section:sge4} to the case $\lambda=(m_1, \ldots, m_s)$ with $s\ge 4$.

\section{Preliminaries}\label{section:Preliminaries}  

In this section, we review thick representations and dense representations. 
For details, see \cite{NO1} and \cite{NO2}. We also introduce the  classification of thick representations and dense representations over ${\Bbb C}$ of connected complex simple Lie groups.  Throughout this section, we assume that $\rho : G \to {\rm GL}(V)$ is an 
$n$-dimensional representation of a group $G$ over a field $k$ unless otherwise stated.   


\begin{definition}[{\cite[Definitions~2.1 and 2.3]{NO1} and \cite[Definitions~2.1 and 2.2]{NO2}}]\label{def:thickanddense}\rm 
Let $\rho : G \to {\rm GL}(V)$ be an $n$-dimensional representation of a group $G$ over a field $k$. 
We say that $\rho$ is {\it $m$-thick} if for any subspaces $V_1, V_2$ of $V$ with 
$\dim_{k} V_1=m$ and $\dim_{k}V_2 = n - m$, there exists $g \in G$ such that $(\rho(g)V_1)\cap V_2= 0$. 
If $\rho$ is $m$-thick for any $0 < m < n$, then we say that $\rho$ is {\it thick}. 
We also say that $\rho$ is {\it $m$-dense} if the exterior representation 
$\bigwedge^m\rho : G \to {\rm GL}(\bigwedge^m V)$ is irreducible. 
If $\rho$ is $m$-dense for any $0 < m < n$, then we say that $\rho$ is {\it dense}. 
\end{definition}

\begin{proposition}[{\cite[Proposition~2.6]{NO1} and 
\cite[Proposition~2.7]{NO2}}]\label{prop:mthickn-mthick} 
For an $n$-dimensional representation $\rho : G \to {\rm GL}(V)$, we have 
\begin{align*}
m\mbox{-thick}  & \Longleftrightarrow (n-m)\mbox{-thick}, \\
m\mbox{-dense} & \Longleftrightarrow (n-m)\mbox{-dense}. 
\end{align*}
\end{proposition}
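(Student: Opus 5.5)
The plan is to prove the two equivalences separately, each by duality: pass from $V$ to the dual space $V^*$ with the contragredient representation $\rho^*(g)=({}^t\rho(g))^{-1}$, and use annihilators to exchange dimension $m$ with dimension $n-m$.

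\emph{Thickness.} Assume $\rho$ is $m$-thick, and let $W_1,W_2\subset V$ be subspaces with $\dim W_1=n-m$ and $\dim W_2=m$. We must find $g\in G$ with $\rho(g)W_1\cap W_2=0$.

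Since $\dim W_1+\dim W_2=n$, we have
\[
\rho(g)W_1\cap W_2=0 \iff \rho(g)W_1+W_2=V \iff W_1\cap \rho(g)^{-1}W_2=0 .
\]
So it suffices to find $h\in G$ with $\rho(h)W_2\cap W_1=0$ and then take $g=h^{-1}$. This is exactly $m$-thickness applied to the pair $(W_2,W_1)$, whose dimensions are $(m,n-m)$, which is the required order.

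This argument uses only the group inverse and needs no dual space. The converse direction is symmetric, swapping the roles of $m$ and $n-m$.

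\emph{Denseness.} The wedge pairing
\[
\textstyle\bigwedge^m V\times\bigwedge^{n-m}V\to\bigwedge^n V\cong k
\]
is perfect. Under it, $g$ acts on $\bigwedge^n V$ by the scalar $\det\rho(g)$. This gives a $G$-isomorphism
\[
\textstyle\bigwedge^{n-m}V\cong \left(\bigwedge^m V\right)^*\otimes\det\rho .
\]

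Next, a finite-dimensional representation $W$ is irreducible if and only if $W^*$ is irreducible: $G$-stable subspaces of $W^*$ correspond to $G$-stable subspaces of $W$ via annihilators, and this correspondence reverses inclusion. Twisting by the one-dimensional character $\det\rho$ does not change the lattice of invariant subspaces. Hence $\bigwedge^m V$ is irreducible if and only if $\bigwedge^{n-m}V$ is irreducible.

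\emph{Main obstacle.} The only point needing care is checking that the wedge pairing is $G$-equivariant up to the determinant twist, i.e.
\[
\rho(g)\alpha\wedge\rho(g)\beta=\det\rho(g)\,(\alpha\wedge\beta)\quad\text{for }\alpha\in\textstyle\bigwedge^m V,\ \beta\in\textstyle\bigwedge^{n-m}V .
\]
This is the standard fact that $\bigwedge^n\rho(g)=\det\rho(g)$.

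One could instead avoid the isomorphism entirely. Let $U\subset\bigwedge^m V$ be invariant. Its orthogonal complement $U^\perp\subset\bigwedge^{n-m}V$ under the pairing is again invariant, because the pairing is invariant up to a nonzero scalar. Moreover $U\mapsto U^\perp$ is an inclusion-reversing bijection between invariant subspaces of the two spaces. That bijection gives the equivalence directly.
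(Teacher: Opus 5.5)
Your proof is correct and takes essentially the same route as the paper, whose proof is only cited from \cite{NO1,NO2}. For thickness you use $\rho(g)V_1\cap V_2=0 \iff V_1\cap\rho(g^{-1})V_2=0$, which is the same observation behind ``$(i,j)$-thick iff $(j,i)$-thick''. For denseness you use the $U\mapsto U^{\perp}$ bijection of invariant subspaces under the perfect pairing $\bigwedge^m V\otimes\bigwedge^{n-m}V\to\bigwedge^n V$, the argument the paper records in its preliminaries; your twisted-dual isomorphism $\bigwedge^{n-m}V\cong(\bigwedge^m V)^{*}\otimes\det\rho$ is just another packaging of that same fact.
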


\begin{proposition}[{\cite[Proposition~2.7 and Corollary~2.8]{NO1} and \cite[Proposition~2.8 and Corollary~2.9]{NO2}}]\label{prop:densethickirreducible} 
For an $n$-dimensional representation $\rho : G \to {\rm GL}(V)$ and $0<m<n$, we have 
\begin{eqnarray*} 
\begin{array}{ccccc} 
\mbox{$m$-dense} & \Longrightarrow & \mbox{$m$-thick}  & &  \\ 
 & & \Downarrow & & \\ 
\mbox{$1$-dense} & \Longleftrightarrow & \mbox{$1$-thick} & \Longleftrightarrow & \mbox{irreducible}. 
\end{array}  
\end{eqnarray*}
In particular, 
\[
dense \Longrightarrow thick \Longrightarrow \mbox{irreducible}. 
\]
\end{proposition}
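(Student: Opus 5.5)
The plan is to prove the four assertions of the diagram separately: $1$-dense $\Leftrightarrow$ irreducible, $1$-thick $\Leftrightarrow$ irreducible, $m$-dense $\Rightarrow$ $m$-thick, and $m$-thick $\Rightarrow$ irreducible. The final line, dense $\Rightarrow$ thick $\Rightarrow$ irreducible, then follows by applying these for every $0<m<n$. The arguments are elementary linear algebra over an arbitrary field $k$, using only the definitions in Definition~\ref{def:thickanddense}.

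First, $1$-dense $\Leftrightarrow$ irreducible holds by definition, since $\bigwedge^1\rho=\rho$. For $1$-thick $\Leftrightarrow$ irreducible, $1$-thickness says that for every line $L$ and hyperplane $H$ there is $g\in G$ with $\rho(g)L\not\subset H$.
\begin{itemize}
\item If $\rho$ is irreducible, then $\langle \rho(g)L : g\in G\rangle$ is a nonzero invariant subspace, hence equals $V$. So it is not contained in $H$, and some $\rho(g)L\not\subset H$.
\item Conversely, if $0\neq W\subsetneq V$ is invariant, choose a line $L\subset W$ and a hyperplane $H\supseteq W$. Then $\rho(g)L\subset W\subset H$ for all $g$, so $\rho$ is not $1$-thick.
\end{itemize}

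For $m$-dense $\Rightarrow$ $m$-thick, let $V_1,V_2$ have dimensions $m$ and $n-m$.
\begin{itemize}
\item Let $\omega\in\bigwedge^m V$ be the wedge of a basis of $V_1$ and $\eta\in\bigwedge^{n-m}V$ the wedge of a basis of $V_2$.
\item The standard fact used is that for decomposable vectors, $(\rho(g)V_1)\cap V_2=0$ if and only if $(\bigwedge^m\rho(g))\omega\wedge\eta\neq 0$ in $\bigwedge^n V\cong k$.
\item The map $\phi:x\mapsto x\wedge\eta$ is a nonzero linear functional on $\bigwedge^m V$, because it is nonzero on the wedge of a complement of $V_2$.
\item The span of the orbit $\{(\bigwedge^m\rho(g))\omega\}$ is a nonzero invariant subspace, so by $m$-density it is all of $\bigwedge^m V$. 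Hence it is not contained in $\ker\phi$, and some $g$ satisfies $\phi((\bigwedge^m\rho(g))\omega)\neq0$, as required.
\end{itemize}

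The main step is $m$-thick $\Rightarrow$ irreducible, which we prove by contraposition. Suppose $0\neq W\subsetneq V$ is invariant with $\dim W=d$, so $1\le d\le n-1$. We construct $V_1,V_2$ of dimensions $m$ and $n-m$ such that $\rho(g)V_1\cap V_2\neq0$ for every $g$.
\begin{itemize}
\item \emph{Case $m\le d$.} Take $V_1\subset W$ of dimension $m$, so $\rho(g)V_1\subset W$ for all $g$. Take $V_2$ of dimension $n-m$ containing a subspace $U\subset W$ of dimension $d-m+1$. This is possible since $d-m+1\le n-m$, which follows from $d\le n-1$. Inside $W$ we then get
\[
\dim(\rho(g)V_1\cap U)\ge m+(d-m+1)-d=1 .
\]
\item \emph{Case $m>d$.} Take $V_1\supseteq W$ of dimension $m$, so $\rho(g)V_1\supseteq W$ for all $g$. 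Take $V_2$ of dimension $n-m\ge1$ meeting $W$ nontrivially. Then $\rho(g)V_1\cap V_2\supseteq W\cap V_2\neq0$.
\end{itemize}
The only care needed here is the dimension bookkeeping in the first case, which is exactly where the hypothesis $d\le n-1$ enters.
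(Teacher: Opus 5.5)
Your proof is correct and complete. Each implication is checked directly from Definition~\ref{def:thickanddense}, and the dimension count in the case $m\le d$ (which needs $d-m+1\le n-m$, i.e.\ $d\le n-1$) is handled properly.

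The paper gives no argument for this statement; it only cites earlier work. So there is no proof to match against, but it is worth seeing where your steps sit relative to tools the paper does state.

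\emph{$m$-dense $\Rightarrow$ $m$-thick.} Your argument (the orbit span of $\omega$ against the nonzero functional $x\mapsto x\wedge\eta$) is an unpacked form of Proposition~\ref{prop:condofm-thick}. Failure of $m$-thickness gives invariant realizable subspaces $W_1\subseteq\bigwedge^m V$ and $W_2=W_1^{\perp}$. Realizability makes both nonzero, so $W_1$ is a nonzero proper invariant subspace of $\bigwedge^m V$, contradicting $m$-density.

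\emph{$m$-thick $\Rightarrow$ irreducible.} Your argument is exactly the mechanism behind Proposition~\ref{prop:ijthick}. An invariant $W$ of dimension $d$ shows that $\rho$ is not $(m,d-m+1)$-thick when $m\le d$ (both subspaces taken inside $W$). It also shows that $\rho$ is not $(d,1)$-thick (take $V_1=W$ and $V_2$ a line in $W$). Your two cases are then the step of enlarging these witnessing subspaces to dimensions $m$ and $n-m$.

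Your version has the advantage of being self-contained: it needs neither the realizability criterion nor the $(i,j)$-thick formalism.

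One small point on the final line: when $n=1$ there is no $m$ with $0<m<n$ to apply your argument to. In that case a $1$-dimensional representation is trivially irreducible, so the implications still hold.
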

 
\begin{corollary}[{\cite[Corollary~2.9]{NO1} and \cite[Corollary~2.10]{NO2}}]\label{cor:whendimle3}
For $n\le 3$, we have 
\[
dense \Longleftrightarrow thick \Longleftrightarrow \mbox{irreducible}.   
\]
\end{corollary}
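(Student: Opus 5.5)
The plan is to reduce everything to the case $m=1$ (and its dual $m=n-1$) by the symmetry of Proposition~\ref{prop:mthickn-mthick}, and then invoke Proposition~\ref{prop:densethickirreducible}. The statement concerns an $n$-dimensional representation $\rho : G \to {\rm GL}(V)$ with $n\le 3$. By Proposition~\ref{prop:densethickirreducible} we already have the chain dense $\Rightarrow$ thick $\Rightarrow$ irreducible for every $n$. So it suffices to show that irreducible implies dense when $n\le 3$, i.e. that $\rho$ is $m$-dense for every $0<m<n$.

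For $n=1$ there is no $m$ with $0<m<n$, so all three conditions hold vacuously. A one-dimensional representation is irreducible, so the equivalence is trivial.

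For $n=2$ the only value is $m=1$. For $n=3$ the values are $m=1,2$, and by Proposition~\ref{prop:mthickn-mthick} we have $2$-dense $\Leftrightarrow$ $1$-dense. In both cases denseness is therefore equivalent to $1$-denseness. By the bottom row of Proposition~\ref{prop:densethickirreducible}, $1$-denseness is equivalent to irreducibility, since $\bigwedge^1\rho=\rho$. Hence irreducible $\Rightarrow$ dense, which closes the cycle of implications.

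There is no real obstacle here; the only point needing care is the case $n=3$, where the reduction from $m=2$ to $m=1$ uses the duality $m\leftrightarrow n-m$ of Proposition~\ref{prop:mthickn-mthick}. Behind that duality is the $G$-equivariant isomorphism $\bigwedge^{n-m}V\cong (\bigwedge^m V)^\vee\otimes\bigwedge^n V$, which preserves irreducibility. We may take this as given, since it is stated earlier.
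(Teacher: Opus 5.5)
Your proof is correct and is essentially the intended argument. The paper only cites this corollary from earlier work, but it follows exactly as you describe: for $n\le 3$ every $0<m<n$ equals $1$ or $n-1$, so Proposition~\ref{prop:mthickn-mthick} reduces denseness to $1$-denseness. Proposition~\ref{prop:densethickirreducible} then identifies $1$-denseness with irreducibility, which closes the chain dense $\Rightarrow$ thick $\Rightarrow$ irreducible.
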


\begin{notation}\rm
For a representation $\rho : G  \to {\rm GL}(V)$ of a group $G$, we sometimes call
$V$ itself a representation of $G$. For a group homomorphism $\varphi : G' \to G$, we denote by 
$\varphi^{\ast}V$ the representation $\rho\circ \varphi : G' \to {\rm GL}(V)$ of $G'$.  
\end{notation}

\bigskip

Let $\rho : G \to {\rm GL}(V)$ be an $n$-dimensional representation of a group $G$ over a field $k$. 
For positive integers $i$ and $j$ with $i+ j= n$, let us consider the $G$-equivariant perfect 
pairing $\bigwedge^i V \otimes_{k} \bigwedge^j V \stackrel{\wedge}{\longrightarrow} \bigwedge^{n} V \cong k$. 
For a $G$-invariant subspace $W$ of $\bigwedge^i V$, put $W^{\perp} := \{ y \in 
\bigwedge^j V 
\mid  x\wedge y = 0 \mbox{ for any } x \in W \}$. 
Then $W^{\perp}$ is also $G$-invariant. 
In particular, $\bigwedge^i V$ is irreducible if and only if so is $\bigwedge^j V$.

\begin{definition}[{\cite[Definition~2.10]{NO1} and \cite[Definition~2.11]{NO2}}]\label{def:vector}\rm
Let $V$ be an $n$-dimensional vector space over a field $k$. 
For a $d$-dimensional subspace $V'$ of $V$ with $0 < d < n$, we can consider a point  
$[ \bigwedge^d V' ]$ in the projective space ${\mathbb P}(\bigwedge^d V)$. 
In the sequel, we identify $[ \bigwedge^d V' ]$ with   
a non-zero vector $\bigwedge^d V' \in \bigwedge^d V$ (which is determined 
by $[ \bigwedge^d V' ]$ up to scalar) for simplicity.  
For a vector subspace $W \subseteq \bigwedge^d V$, we say that $W$ is 
{\it realizable} if $W$ contains a non-zero vector $\bigwedge^d V'$ 
obtained by a $d$-dimensional subspace $V'$ of $V$. 
\end{definition}

\begin{proposition}[{\cite[Proposition~2.11]{NO1} and \cite[Proposition~2.12]{NO2}}]\label{prop:condofm-thick}
Let $V$ be an $n$-dimensional representation of a group $G$. 
For $0 < m <n$,      
$V$ is not $m$-thick if and only if there exist 
$G$-invariant realizable subspaces $W_1 \subseteq \bigwedge^m V$ and $W_2 
\subseteq \bigwedge^{n-m} V$ 
such that $W_1^{\perp} = W_2$. 
\end{proposition}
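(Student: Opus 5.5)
We prove both directions by translating between pairs of subspaces $(V_1,V_2)$ and decomposable vectors, using the perfect pairing $\bigwedge^m V\otimes\bigwedge^{n-m}V\to\bigwedge^n V\cong k$. The basic fact is that, for subspaces $V_1,V_2$ of dimensions $m$ and $n-m$,
\[
(\bigwedge^m V_1)\wedge(\bigwedge^{n-m}V_2)\neq 0 \iff V_1\cap V_2=0 .
\]
This holds because the wedge of a basis of $V_1$ with a basis of $V_2$ is nonzero exactly when the union is a basis of $V$. By $G$-equivariance we have $\rho(g)\bigwedge^m V_1=\bigwedge^m(\rho(g)V_1)$. Hence $(\rho(g)V_1)\cap V_2=0$ for some $g$ if and only if $\bigwedge^{n-m}V_2\notin (W_1)^{\perp}$, where $W_1$ is the $G$-span of $\bigwedge^m V_1$.

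\emph{($\Leftarrow$)} Suppose $V$ is not $m$-thick. Then there are $V_1,V_2$ with $(\rho(g)V_1)\cap V_2\neq 0$ for all $g\in G$.
\begin{itemize}
\item Set $W_1=\langle \rho(g)\bigwedge^m V_1 : g\in G\rangle$. It is $G$-invariant and realizable, since it contains $\bigwedge^m V_1$.
\item Set $W_2=W_1^{\perp}$, which is $G$-invariant by the remark preceding Definition~\ref{def:vector}.
\item By the fact above, every translate $\rho(g)\bigwedge^m V_1$ wedges to zero with $\bigwedge^{n-m}V_2$. So $\bigwedge^{n-m}V_2\in W_1^{\perp}=W_2$, and $W_2$ is realizable.
\end{itemize}
This gives the required pair with $W_1^{\perp}=W_2$.

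\emph{($\Rightarrow$)} Suppose such $W_1,W_2$ exist. Pick realizing vectors $\bigwedge^m V_1\in W_1$ and $\bigwedge^{n-m}V_2\in W_2$. By $G$-invariance, $\rho(g)\bigwedge^m V_1\in W_1$ for every $g$. Since $W_2=W_1^{\perp}$, we get $\bigwedge^m(\rho(g)V_1)\wedge\bigwedge^{n-m}V_2=0$ for every $g$. By the fact above, $(\rho(g)V_1)\cap V_2\neq 0$ for all $g$, so $V$ is not $m$-thick.

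The only point needing care is the basic equivalence. If $V_1\cap V_2\neq0$, choose bases of $V_1$ and $V_2$ sharing a common nonzero vector; the wedge then vanishes. If $V_1\cap V_2=0$, the dimensions add to $n$, so $V_1\oplus V_2=V$ and the wedge is a nonzero top form. The definition of $\perp$ is used only with the orientation $W_1\subseteq\bigwedge^m V$, $W_1^{\perp}\subseteq\bigwedge^{n-m}V$, which matches the statement.
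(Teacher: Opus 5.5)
Your proof is correct and is essentially the standard argument behind this proposition; the paper gives no proof of its own here and only cites earlier work. As in that argument, you take $W_1$ to be the $G$-span of $\bigwedge^m V_1$ and $W_2=W_1^{\perp}$, and use that $\bigwedge^m V_1\wedge\bigwedge^{n-m}V_2\neq 0$ exactly when $V_1\cap V_2=0$. The only slip is cosmetic: your two directions are labeled the wrong way round, since ``not $m$-thick implies such $W_1,W_2$ exist'' is the ($\Rightarrow$) direction of the stated equivalence.
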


\begin{definition}[{\cite{NO4}}]\rm\label{def:ijthick} 
Let $0 \le i, j \le n$. 
For an $n$-dimensional representation $\rho : G \to {\rm GL}(V)$, we say that $\rho$ is {\it $(i, j)$-thick} if for any subspaces $V_1, V_2$ of $V$ with 
$\dim_{k} V_1=i$ and $\dim_{k}V_2 =j$, 
there exists $g\in G$ such that $(\rho(g)V_1)\cap V_2= 0$. 
Note that if $\rho$ is $(i, j)$-thick, then $0 \le i+j \le n$.  
For $0<m<n$, $\rho$ is $m$-thick if and only if it is $(m, n-m)$-thick.  
We also see that $\rho$ is $(i, j)$-thick if and only if it is $(j, i)$-thick. 
\end{definition}

\begin{proposition}[{\cite{NO4}}]\label{prop:i+1jthick}
If an $n$-dimensional representation 
$\rho: G \to {\rm GL}(V)$ is $(i+1, j)$-thick, then $i+j<n$ and 
$\rho$ is $(i, j)$-thick. 
\end{proposition}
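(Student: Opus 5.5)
We need to prove that if $\rho$ is $(i+1,j)$-thick, then $i+j<n$ and $\rho$ is $(i,j)$-thick. The argument is direct from the definition in Definition~\ref{def:ijthick}, with no need for the exterior-power machinery.

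First, the inequality $i+j<n$. Definition~\ref{def:ijthick} notes that $(i+1,j)$-thickness forces $(i+1)+j\le n$. The reason is that for subspaces $V_1,V_2$ of dimensions $i+1$ and $j$, the subspace $\rho(g)V_1$ has dimension $i+1$ for every $g$. If $(i+1)+j>n$, the dimension formula gives
\[
\dim\bigl(\rho(g)V_1\cap V_2\bigr)\ \ge\ (i+1)+j-n\ >\ 0
\]
for every $g\in G$. Such subspaces exist as long as $i+1\le n$ and $j\le n$, so no $g$ can make the intersection zero. Hence $i+1+j\le n$, that is, $i+j<n$.

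Second, $(i,j)$-thickness. Take arbitrary subspaces $V_1,V_2$ with $\dim V_1=i$ and $\dim V_2=j$. Since $i<i+1\le n$, we can choose a subspace $V_1'\supseteq V_1$ with $\dim V_1'=i+1$. Applying $(i+1,j)$-thickness to the pair $(V_1',V_2)$ gives $g\in G$ with $\rho(g)V_1'\cap V_2=0$. Because $\rho(g)V_1\subseteq\rho(g)V_1'$, it follows that $\rho(g)V_1\cap V_2=0$, so $\rho$ is $(i,j)$-thick.

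There is no real obstacle. The only care needed is the degenerate range of indices: we need $0\le i$ and $i+1\le n$, which is implicit in the hypothesis, so that the enlarged subspace $V_1'$ exists.
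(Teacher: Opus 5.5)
Your proof is correct and takes the same approach as the paper, which simply says the statement follows directly from the definition of $(i,j)$-thickness. The dimension count giving $i+j<n$ and the enlargement of $V_1$ to an $(i+1)$-dimensional $V_1'$ are exactly the details that one-line proof leaves implicit.
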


\proof
The statement directly follows from the definition of $(i, j)$-thickness. 
\qed 

\bigskip 

The following proposition will be used in Sections~\ref{section:thickrepresentations}--\ref{section:sge4}. 

\begin{proposition}[{\cite{NO4}}]\label{prop:ijthick} 
Let $i$ and $j$ be positive integers with $i+j \le n$. 
Assume that $\rho : G \to {\rm GL}(V)$ is an $n$-dimensional 
representation that is not $(i, j)$-thick. 
If $\min\{ i,  j\} \le m \le n - \max\{ i, j \}$ or $\max\{ i,  j\} \le m \le n - \min\{ i, j \}$, then 
$\rho$ is not $m$-thick. In particular, $\rho$ is not thick.  
\end{proposition}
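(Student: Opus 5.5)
The plan is to argue by contraposition: assuming $\rho$ is not $(i,j)$-thick, exhibit explicit subspaces of dimensions $m$ and $n-m$ that no group element can separate. Fix subspaces $V_1, V_2$ with $\dim V_1 = i$, $\dim V_2 = j$ such that $(\rho(g)V_1)\cap V_2 \neq 0$ for every $g\in G$. Since $\rho$ is $(i,j)$-thick if and only if it is $(j,i)$-thick (Definition~\ref{def:ijthick}), and $m$-thick if and only if $(n-m)$-thick (Proposition~\ref{prop:mthickn-mthick}), we may assume without loss of generality that $i \le j$. The two ranges in the hypothesis then become $i\le m\le n-j$ and $j \le m \le n-i$.

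\emph{First range, $i \le m \le n-j$.} We enlarge both subspaces. Choose a subspace $V_1' \supseteq V_1$ with $\dim V_1' = m$; this is possible because $m \ge i$. Choose $V_2' \supseteq V_2$ with $\dim V_2' = n-m$; this is possible because $n-m \ge j$. For every $g\in G$ we have
\[
(\rho(g)V_1')\cap V_2' \supseteq (\rho(g)V_1)\cap V_2 \neq 0 .
\]
Hence the pair $(V_1', V_2')$ witnesses that $\rho$ is not $(m,n-m)$-thick, which is exactly the statement that $\rho$ is not $m$-thick.

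\emph{Second range, $j \le m \le n-i$.} Here we enlarge in the swapped roles. Choose $V_2'' \supseteq V_2$ with $\dim V_2'' = m$ and $V_1'' \supseteq V_1$ with $\dim V_1'' = n-m$; the dimension constraints $m\ge j$ and $n-m \ge i$ are exactly what is needed. The same containment argument shows that $\rho$ is not $(n-m, m)$-thick, i.e.\ not $(n-m)$-thick. By Proposition~\ref{prop:mthickn-mthick} it is then not $m$-thick.

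\emph{Final clause.} Both ranges contain at least one integer $m$ with $0<m<n$. Indeed, $i\ge 1$ and $n-j\le n-1$, and the range is nonempty because $i+j\le n$. Hence $\rho$ is not thick. The only point requiring care is this bookkeeping: making sure each dimension inequality is used in the right direction, and that the WLOG reduction to $i\le j$ is compatible with the symmetric formulation of the two ranges. There is no genuine obstacle beyond that.
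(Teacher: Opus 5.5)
Your proposal is correct and follows the paper's proof: fix a non-separable pair $(V_1,V_2)$, reduce to $i\le j$, and enlarge both subspaces to dimensions $m$ and $n-m$, since $(\rho(g)V_1')\cap V_2'\supseteq(\rho(g)V_1)\cap V_2\neq 0$. You also write out the range $j\le m\le n-i$ (swapped roles plus $m$-thick $\Leftrightarrow$ $(n-m)$-thick) and check that $m=i$ satisfies $0<m<n$; the paper leaves both of these to the reader.
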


\proof
Suppose that $\rho$ is not $(i, j)$-thick with $i+j \le n$. 
There exist subspaces $V_1$ and $V_2$ of $V$ 
with $\dim_{k} V_1=i$ and $\dim_{k} V_2=j$ such that 
$(\rho(g)V_1)\cap V_2\neq 0$ for any $g\in G$. 
We may assume that $i=\min \{ i, j\}$ and $j=\max\{ i, j\}$.  
For $i\le m \le n-j$, 
choose subspaces $V'_1 \supseteq V_1$ and $V'_2 \supseteq V_2$ with 
$\dim_{k} V'_1= m$ and $\dim_{k} V'_2=n-m$.  
Then $(\rho(g)V'_1)\cap V'_2\neq 0$ for any $g\in G$, which implies that $\rho$ is not 
$m$-thick. We can also prove the case $j\le m \le n-i$.  
\qed  

\begin{corollary}\label{cor:ijthick} 
Let $\rho : G \to {\rm GL}(V)$ be an $n$-dimensional representation of a group $G$ over a field $k$.  
Let $i$ and $j$ be positive integers with $\max\{i, j\} \le \dfrac{n}{2}$. Assume that 
$\rho$ is not $(i, j)$-thick. 
Then $\rho$ is not $m$-thick for any $\min\{i, j\} \le m \le n-\min\{i, j\}$. 
\end{corollary}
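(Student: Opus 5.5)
This is a direct consequence of Proposition~\ref{prop:ijthick}; the only work is to check that every $m$ in the range $\min\{i,j\} \le m \le n-\min\{i,j\}$ falls into one of the two intervals allowed there. By the symmetry of $(i,j)$-thickness noted in Definition~\ref{def:ijthick}, we may assume $i=\min\{i,j\}$ and $j=\max\{i,j\}$. Since $i \le j \le n/2$, we have $i+j\le n$, so the hypothesis of Proposition~\ref{prop:ijthick} holds and $\rho$ is not $m$-thick for $m$ in either $I_1=[i,\,n-j]$ or $I_2=[j,\,n-i]$.

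The two intervals should cover $[i,\,n-i]$:
\begin{itemize}
\item $I_1$ starts at $i$, and $I_2$ ends at $n-i$.
\item They leave no gap, because $j\le n/2\le n-j$. Hence $I_2$ begins at or before the point where $I_1$ ends, so $I_1\cup I_2=[i,\,n-i]$ as sets of integers.
\end{itemize}
This gives the claim: $\rho$ is not $m$-thick for every integer $m$ with $i\le m\le n-i$.

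One could also argue more symmetrically. For $m \le n/2$ use $I_1$ when $m\le n-j$, which always holds since $m\le n/2\le n-j$. For $m\ge n/2$ use $I_2$, since $m\ge n/2\ge j$. Either way there is no real obstacle; the only point needing care is the inequality $j\le n-j$, which is exactly where the hypothesis $\max\{i,j\}\le n/2$ is used.
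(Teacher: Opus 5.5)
Your proposal is correct and follows the paper's proof: the paper likewise observes that, under $\max\{i,j\}\le n/2$, the two ranges from Proposition~\ref{prop:ijthick} together cover exactly $\min\{i,j\}\le m\le n-\min\{i,j\}$. Your check that there is no gap, via $j\le n/2\le n-j$, is the one detail the paper leaves implicit, and you handle it correctly.
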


\proof
If $\max\{i, j\} \le \dfrac{n}{2}$, then 
the union of $\{ m \in {\Bbb Z} \mid \min\{ i,  j\} \le m \le n- \max\{ i, j \}\}$ and  
$\{ m \in {\Bbb Z} \mid \max\{ i,  j\} \le m \le n - \min\{ i, j \}\}$ coincides with  
$\{ m \in {\Bbb Z} \mid \min\{i, j\} \le m \le n-\min\{i, j\} \}$.  
The statement follows from Proposition~\ref{prop:ijthick}.  
\qed

\bigskip 

Although the notion of $(i, j)$-dense is not used in this paper, we include its definition here for reference.

\begin{definition}[{\cite{NO4}}]\rm\label{def:ijdense}
Let $0\le i, j \le n$.   
For an $n$-dimensional representation $\rho : G \to {\rm GL}(V)$,  
we say that $\rho$ is {\it $(i, j)$-dense} if $W_1\wedge W_2\neq 0$ for any 
$G$-invariant subspaces $0\neq W_1 \subseteq \bigwedge^i V$ and 
$0\neq W_2 \subseteq \bigwedge^j V$. Note that if $\rho$ is $(i, j)$-dense, then 
$0 \le i+j \le n$. 
For $0<m<n$, $\rho$ is $m$-dense if and only if it is $(m, n-m)$-dense. 
We also see that $\rho$ is $(i, j)$-dense if and only if it is $(j, i)$-dense. 
We can also prove that if $\rho : G \to {\rm GL}(V)$ is $(i+1, j)$-dense with 
$i+j < n$, then it is $(i, j)$-dense (for details, see \cite{NO4}). 
\end{definition} 

\begin{remark}[{\cite{NO4}}]\rm 
We see that $\rho : G \to {\rm GL}(V)$ is not $(i, j)$-thick if and only if 
there exist non-zero $G$-invariant realizable subspaces  $0\neq W_1 \subseteq \bigwedge^i V$ and $0\neq W_2 \subseteq \bigwedge^j V$ such that 
$W_1 \wedge W_2=0$. 
Hence, if $\rho$ is $(i, j)$-dense, then it is $(i, j)$-thick. 
\end{remark}

\begin{definition}[{\cite{NO4}}]\rm\label{def:thickdiagram}  
For a positive integer $n$, set $\Delta(n) = \{ (i, j)\in {\Bbb Z}^2 \mid i, j \ge 0 \text{ and } i+j \le n \}$.  
For an $n$-dimensional representation $\rho : G \to {\rm GL}(V)$, 
we define the {\it thick diagram} $T(\rho)$ of $\rho$ by 
\[
T(\rho) = \{ (i, j)\in \Delta(n) \mid \rho \text{ is $(i, j)$-thick} \}.  
\] 
We also define the {\it dense diagram} $D(\rho)$ of $\rho$ by 
\[
D(\rho) = \{ (i, j)\in \Delta(n) \mid \rho \text{ is $(i, j)$-dense} \}.  
\] 
It is easy to see that 
\[
\{ (i, 0) \mid 0 \le i  \le n \}\cup \{ (0, j) \mid 0 \le j \le n \} \subseteq D(\rho) \subseteq T(\rho) \subseteq \Delta(n).  
\]
We also see that $\rho$ is thick (resp. dense) if and only if $T(\rho)=\Delta(n)$ (resp. 
$D(\rho)=\Delta(n)$). 
\end{definition}

\begin{remark}[{\cite{NO4}}]\rm\label{rem:thickdiagram}  
For a subset $S \subseteq \Delta(n)$, set ${}^t S = \{ (j, i) \in \Delta(n) \mid (i, j)\in S \}$. 
For $(i, j) \in \Delta(n)$, we also set $\overline{(i, j)} = \{ (k, l) \in \Delta(n) \mid 0 \le k \le i, 0 \le l \le j \}$. For an $n$-dimensional representation $\rho : G \to {\rm GL}(V)$,  
${}^t T(\rho) = T(\rho)$ and ${}^t D(\rho) = D(\rho)$.  We also see that 
$(i, j) \in T(\rho)$ (resp. $(i, j) \in D(\rho)$), then $\overline{(i, j)} \subseteq T(\rho)$ (resp. $\overline{(i, j)} \subseteq D(\rho)$).  
In particular, if $(i', j') \in \overline{(i, j)}$ and $\rho$ is not $(i', j')$-thick, then 
$\rho$ is not $(i, j)$-thick. 
For further properties of $T(\rho)$ and $D(\rho)$, see \cite{NO4}. 
\end{remark}

\bigskip 

The classification of thick representations was first successfully completed in the case of connected semi-simple Lie groups over ${\Bbb C}$. 
Let $G$ be a connected semi-simple Lie group over ${\Bbb C}$. Let ${\frak g}, {\frak h}, \Delta^{+} (\subset {\frak h}^{\ast})$ be the Lie algebra of $G$, a Cartan subalgebra, the set of positive roots, respectively. 
For a finite-dimensional representation 
$\rho : G \to {\rm GL}(V)$ over ${\Bbb C}$, denote by $W(V)$ the set of weights of $V$. 
We can regard $W(V)$ as a partially ordered set by using $\Delta^{+}$. 
We say that $\rho : G \to {\rm GL}(V)$ is {\it weight multiplicity-free} if 
the dimension of the $\varphi$-eigenspace is $1$ 
for any $\varphi \in W(V)$. 

\begin{theorem}[{\cite[Theorem~3.5]{NO2}}]\label{th:WMF}
For a connected semi-simple Lie group $G$ over ${\Bbb C}$, a finite-dimensional irreducible representation 
$\rho : G \to {\rm GL}(V)$ over ${\Bbb C}$ is thick if and only if $\rho$ is weight multiplicity-free and the weight poset 
$W(V)$ is a totally ordered set. 
\end{theorem}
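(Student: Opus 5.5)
The plan is to reduce thickness of $\rho$ to a finite combinatorial condition on weight spaces, using a Borel subgroup $B\supseteq T$ (with Lie algebra $\frak h\oplus\bigoplus_{\alpha\in\Delta^+}{\frak g}_\alpha$), its opposite $B^-$, and the Bruhat decomposition. Fix $0<m<n=\dim V$.

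\emph{Step 1: reduction to Borel-stable subspaces.} Consider the set
\[
Z=\{(V_1,V_2)\in {\rm Gr}(m,V)\times{\rm Gr}(n-m,V)\mid (\rho(g)V_1)\cap V_2\neq 0 \text{ for all } g\in G\}.
\]
$Z$ is Zariski closed, since it is an intersection of the closed incidence conditions indexed by $g$. It is also stable under $(V_1,V_2)\mapsto(\rho(h)V_1,\rho(h')V_2)$ for all $h,h'\in G$: if $(V_1,V_2)\in Z$, then $(\rho(h)V_1,V_2)\in Z$ by substituting $gh$, and $(V_1,\rho(h')V_2)\in Z$ because $\rho(g)V_1\cap\rho(h')V_2=\rho(h')\bigl(\rho(h'^{-1}g)V_1\cap V_2\bigr)$. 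If $Z\ne\emptyset$, the Borel fixed point theorem, applied to the solvable group $B\times B^-$ acting on the projective variety $Z$, gives a pair in $Z$ with $V_1$ $B$-stable and $V_2$ $B^-$-stable. Conversely, any pair in $Z$ witnesses that $\rho$ is not $m$-thick. For such a pair, Bruhat gives $G=\bigcup_{w\in W}B^-\dot wB$, and $\rho(b^-\dot wb)V_1\cap V_2=\rho(b^-)(\rho(\dot w)V_1\cap V_2)$. Hence $\rho$ is $m$-thick if and only if, for every $B$-stable $m$-dimensional $V_1$ and every $B^-$-stable $(n-m)$-dimensional $V_2$, some Weyl group element $w$ satisfies $\dot wV_1\cap V_2=0$.

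\emph{Step 2: the "if" direction.} Suppose $\rho$ is weight multiplicity-free and its weights are totally ordered, $\mu_1>\mu_2>\dots>\mu_n$, with weight vectors $v_{\mu_i}$. I will show that the only $B$-stable subspaces are $U_k=\langle v_{\mu_1},\dots,v_{\mu_k}\rangle$, and dually the only $B^-$-stable ones are $\langle v_{\mu_{n-k+1}},\dots,v_{\mu_n}\rangle$.
\begin{itemize}
\item A $B$-stable subspace is $T$-stable, hence a sum of weight lines.
\item For each $i>1$, some simple root $\alpha$ should satisfy $\mu_{i-1}=\mu_i+\alpha$ and $e_\alpha v_{\mu_i}\ne 0$. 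The argument is as follows. Since $v_{\mu_i}$ is not a highest weight vector, some $e_\alpha v_{\mu_i}\neq0$, and $\mu_i+\alpha$ is a weight above $\mu_i$. Total order plus $\alpha$-strings then force consecutive weights to differ by a single simple root.
\end{itemize}
With this, a $B$-stable subspace containing $v_{\mu_i}$ contains all $v_{\mu_j}$ with $j<i$, so it equals some $U_k$. Taking $w=w_0$, which reverses the order on $W(V)$, we get $\dot w_0U_m=\langle v_{\mu_{n-m+1}},\dots,v_{\mu_n}\rangle$. This meets $\langle v_{\mu_1},\dots,v_{\mu_{n-m}}\rangle$ trivially. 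I realize this does not yet match the required $B^-$-stable $V_2=\langle v_{\mu_{m+1}},\dots,v_{\mu_n}\rangle$, so I will instead choose $w=e$: then $U_m\cap V_2=0$ directly. So thickness holds with $w=e$, and Step 2 is easy once the chain structure is established.

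\emph{Step 3: the "only if" direction, which I expect to be the main obstacle.} Assume $\rho$ is thick. If there is a weight of multiplicity $\ge2$, or two incomparable weights, then the lattice of $T$-stable $B$-stable subspaces is not a chain. So for some $m$ there are two distinct $B$-stable subspaces $U\ne U'$ of dimension $m$. I will take $V_1=U$ and take $V_2$ to be a $B^-$-stable complement-type subspace built from $U'$: the span of the weight vectors outside $U'$, which is $B^-$-stable because $U'$ is an upper set. By construction $V_2$ is transversal to $U'$ but not to $U$. The hard part is showing $\dot wU\cap V_2\ne0$ for every $w\in W$, not just $w=e$, so that Step 1 applies. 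I would first try the case of a simple root $\alpha$ with $\dim V_\mu\geq2$, or with incomparable $\mu,\nu$ differing in an $\alpha$-direction. There I would pass to the rank-one subgroup $SL_2^{(\alpha)}$, use the $B$-fixed-point reduction there, and then lift to $G$ via Corollary~\ref{cor:ijthick}. The lift should work because failure of $(i,j)$-thickness for small $i,j$ propagates to failure of $m$-thickness.
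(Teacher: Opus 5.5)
The paper gives no proof of this statement; it only quotes it from \cite{NO2}. I therefore judge your argument on its own. Steps~1 and~2 are sound. $Z$ is closed and $G\times G$-stable, so Borel's fixed point theorem together with $G=\bigcup_w B^-\dot wB$ gives the reduction. In the multiplicity-free, totally ordered case the $B$-stable (resp.\ $B^-$-stable) subspaces form a single chain, so $w=e$ works. Two small fixes there: justify $\mu_{i-1}=\mu_i+\alpha$ by heights (a weight strictly between $\mu_i$ and $\mu_i+\alpha$ would split the simple root $\alpha$ into two nonzero $\mathbb{Z}_{\ge0}$-combinations of simple roots), and delete the $w_0$ detour.

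The genuine gap is Step~3. You correctly locate the difficulty, namely showing $\dot wU\cap V_2\neq0$ for every $w\in W$, but you do not resolve it, and your proposed resolution runs the wrong way. Non-thickness of $V|_{SL_2^{(\alpha)}}$ tells you nothing about $V$ as a $G$-module:
\begin{itemize}
\item Lemma~\ref{lemma:grouphomnon-thick} transports non-thickness only from $G$ down to a subgroup, never back. Indeed, $V|_{SL_2^{(\alpha)}}$ is usually reducible, hence not even $1$-thick, although $V$ is irreducible.
\item Corollary~\ref{cor:ijthick} only compares different $(i,j)$ for one fixed representation of one fixed group, so it cannot lift anything from $SL_2^{(\alpha)}$ to $G$.
\item Your $V_2$ (``span of the weight vectors outside $U'$'') is only well defined and $B^-$-stable when $V$ is multiplicity-free, so the multiplicity case needs its own construction.
\end{itemize}

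The missing idea is a weight argument showing that only those $w$ which stabilize $U$ can help. Let $\omega_1$ be the weight of the $B$-eigenvector $\bigwedge^mU$, which is dominant, and $\omega_2$ the weight of the $B^-$-eigenvector $\bigwedge^{n-m}V_2$, which is antidominant. Suppose $\dot wU\cap V_2=0$. Then $\dot w(\bigwedge^mU)\wedge\bigwedge^{n-m}V_2$ is a nonzero vector of the trivial module $\bigwedge^nV$, so $w\omega_1=-\omega_2$. Since $\omega_1$ and $-\omega_2$ are both dominant and $W$-conjugate, $\omega_1=-\omega_2$ and $w\in W_{\omega_1}$. This stabilizer is generated by the simple reflections $s_\alpha$ with $\langle\omega_1,\alpha^\vee\rangle=0$. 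For such $\alpha$, $\bigwedge^mU$ is killed by $e_\alpha$ and has $h_\alpha$-weight $0$, so it is fixed by $SL_2^{(\alpha)}\ni\dot s_\alpha$. Hence $\dot s_\alpha U=U$, and so $\dot wU=U$.

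Consequently $V$ is $m$-thick if and only if $U\cap V_2=0$ for every $B$-stable $U$ and $B^-$-stable $V_2$ of complementary dimensions. The ``only if'' direction then needs just one intersecting pair. In the multiplicity-free case your $U$ and $V_2$ already are one. If $\dim V_\mu\geq2$ for some weight $\mu$, take $U=\bigoplus_{\xi>\mu}V_\xi\oplus L$ and $V_2=\bigoplus_{\xi\not\geq\mu}V_\xi\oplus L'$ with $L\subseteq L'\subset V_\mu$, $\dim L=1$ and $\dim L'=\dim V_\mu-1$; both are stable under the respective Borel subgroups and $U\cap V_2\supseteq L\neq0$.
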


\begin{theorem}[{\cite[Theorem~3.6]{NO2}}]\label{th:listofthick}
The thick representations of connected complex 
simple Lie groups are those on the following list: 
\begin{enumerate}
\item the trivial $1$-dimensional representation for any groups, 
\item $A_n \; (n\ge 1)$    
\begin{itemize} 
\item the standard representation $V$ for $A_n \; (n\ge1)$ with highest weight $\omega_1$, 
\item the dual representation $V^{\ast}$ of $V$ for $A_n \; (n\ge1)$ with highest weight $\omega_n$, 
\item the symmetric tensor $S^{m}(V)$ $(m\ge 2)$ of $V$ for $A_1$ with highest weight $m\omega_1$,   
\end{itemize}
\item $B_n \; (n\ge 2)$ 
\begin{itemize}
\item  the standard representation $V$ for $B_n \; (n\ge2)$ with highest weight $\omega_1$,  
\item  the spin representation for $B_2$ with highest weight $\omega_2$,  
\end{itemize} 
\item $C_n \; (n\ge 3)$ 
\begin{itemize}
\item  the standard representation $V$ for $C_n \; (n\ge3)$ with highest weight $\omega_1$,  
\end{itemize} 
\item $G_2$ 
\begin{itemize}
\item the $7$-dimensional representation $V$ for $G_2$ with highest weight $\omega_1$.   
\end{itemize} 
\end{enumerate}
\end{theorem}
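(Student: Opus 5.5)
This list is cited from \cite[Theorem~3.6]{NO2}. A self-contained proof would derive it from Theorem~\ref{th:WMF}: an irreducible representation $V$ of a connected complex simple Lie group is thick exactly when it is weight multiplicity-free and its weight poset $W(V)$ is totally ordered. The plan is therefore to classify the highest weights $\lambda$ for which $W(V(\lambda))$ is a chain with all multiplicities one, going through the Dynkin types.

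\emph{Reduction to a chain.} If $W(V)$ is totally ordered, then the highest weight $\lambda$ has a unique weight immediately below it, which must be $\lambda-\alpha_i$ for a single simple root $\alpha_i$. Since $\lambda-\alpha_i$ is a weight if and only if $\langle\lambda,\alpha_i^\vee\rangle>0$, the weight $\lambda$ must be a multiple $a\,\omega_i$ of one fundamental weight.

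\emph{The case $a\ge 2$.} The weights $\lambda-\alpha_i$ and $\lambda-2\alpha_i$ both occur. If $\alpha_j$ is a simple root adjacent to $\alpha_i$, then $\lambda-\alpha_i-\alpha_j$ also occurs. This weight is incomparable with $\lambda-2\alpha_i$, since their difference $\alpha_i-\alpha_j$ is neither a nonnegative nor a nonpositive combination of simple roots. Hence $a\ge 2$ is only possible when the rank is one, i.e. type $A_1$, where every $S^m(V)$ has the chain of weights $m,m-2,\dots,-m$.

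\emph{The case $a=1$.} Here $\lambda=\omega_i$. Descending from $\omega_i$, at each step only one simple root may be subtracted. A branch point in the sequence of weights would produce two incomparable weights, in the same way as above. This forces the following:
\begin{itemize}
\item $\omega_i$ is a minuscule or quasi-minuscule end node, or the spin node of $B_2$;
\item the saturated weight diagram (its crystal graph) is a path.
\end{itemize}
I would go through the fundamental representations type by type.
\begin{itemize}
\item In type $A_n$, $\bigwedge^k$ with $2\le k\le n-1$ contains $\varepsilon_1+\dots+\varepsilon_{k-1}+\varepsilon_{k+1}$. From there one can lower either at $k-1$ or at $k+1$, producing incomparable weights, so only $\omega_1$ and $\omega_n$ survive.
\item For $B_n$, $C_n$ and $D_n$, the standard representations have weights $\varepsilon_1>\dots>\varepsilon_n(>0)>-\varepsilon_n>\dots$. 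This is a chain for $B_n$ and $C_n$. For $D_n$ the weights $\varepsilon_n$ and $-\varepsilon_n$ are incomparable, and for $B_n$ and $C_n$ the zero weight has the right multiplicity. All other fundamental representations of these types branch, except the spin representation of $B_2$ (the $4$-dimensional representation, which has a chain of weights). Note that $C_2=B_2$ explains the range $n\ge3$ in the $C_n$ entry.
\item For the exceptional types, $G_2$ with $\omega_1$ has the $7$ weights in a chain. The minimal representations of $F_4$, $E_6$, $E_7$, $E_8$ already have branching weight diagrams, or a zero weight of multiplicity greater than one (for $F_4$ and $E_8$).
\end{itemize}

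\emph{Converse and main obstacle.} For each representation on the list, one exhibits the chain of weights directly to show it is thick. The main obstacle is the uniform exclusion argument: showing that each excluded fundamental representation really has an incomparable pair or a repeated weight. I would handle this by finding a single explicit branch point two or three steps below $\omega_i$ in each excluded case. For types $D$ and $E$, the first trivalent node met while descending gives such a branch point.
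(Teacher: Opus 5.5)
The paper gives no proof of this statement. It quotes it from [NO2, Theorem~3.6], right after Theorem~\ref{th:WMF}. Your plan is the natural route: note via Proposition~\ref{prop:densethickirreducible} that thick representations are irreducible, apply Theorem~\ref{th:WMF}, and classify the highest weights whose weights form a multiplicity-free chain. The following parts are correct:
\begin{enumerate}
\item the reduction to $\lambda=a\omega_i$;
\item the exclusion of $a\ge 2$ in rank $\ge 2$;
\item the type $A$ computation;
\item the treatment of the standard representations of $B_n$, $C_n$, $D_n$;
\item the check that every listed representation has a multiplicity-free chain of weights.
\end{enumerate}

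The genuine gap is in the exceptional types. Checking only the minimal representations of $F_4,E_6,E_7,E_8$ does not exclude the other fundamental representations. Non-thickness of one irreducible representation says nothing about another, so every $\omega_i$ has to be treated.

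\emph{Types $D$ and $E$.} Your trivalent-node descent does cover all end nodes here. Along a path $i=j_0,j_1,\dots$ in the Dynkin diagram, $\langle \omega_i-\alpha_{j_0}-\cdots-\alpha_{j_r},\alpha_{j_{r+1}}^\vee\rangle=-\langle\alpha_{j_r},\alpha_{j_{r+1}}^\vee\rangle>0$. So the descent reaches the trivalent node, and there it branches.

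\emph{Types $F_4$ and $G_2$.} These have no trivalent node, so that argument is unavailable. Moreover, $G_2$ with $\omega_2$ (the $14$-dimensional adjoint representation) is not mentioned at all.

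To close the gap, first state a uniform interior-node lemma. If node $i$ has two neighbours $j\ne k$, then $\omega_i-\alpha_i-\alpha_j$ and $\omega_i-\alpha_i-\alpha_k$ are both weights. They differ by $\alpha_k-\alpha_j$, so they are incomparable. This disposes of every interior node in every type at once. In particular it covers $\omega_2,\omega_3$ of $F_4$, and your unproved claim that ``all other fundamental representations'' of $B_n$, $C_n$ branch, except at the end node $\omega_n$.

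The remaining end nodes are handled one by one:
\begin{enumerate}
\item $\omega_2$ of $G_2$ and $\omega_1$ of $F_4$ give adjoint representations. Their zero weight has multiplicity equal to the rank, so they are not weight multiplicity-free.
\item $\omega_4$ of $F_4$ has a zero weight of multiplicity $2$. Alternatively, at $\omega_4-\alpha_4-\alpha_3-\alpha_2$ both $\alpha_1$ and $\alpha_3$ can be subtracted.
\item $\omega_n$ of $B_n$ and of $C_n$, for $n\ge 3$, branch at $\omega_n-\alpha_n-\alpha_{n-1}$.
\end{enumerate}

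Finally, your sentence saying $\omega_i$ is ``forced'' to be a minuscule or quasi-minuscule end node is an outcome of this case analysis, not a step in it. It should not be used as an input.
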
  

\begin{theorem}[{\cite[Theorem~3.7]{NO2}}]\label{th:listofdense} 
The dense representations of connected complex simple Lie groups are those on the following list: 
\begin{enumerate}
\item the trivial $1$-dimensional representation for any groups,  
\item $A_n \; (n\ge 1)$    
\begin{itemize} 
\item the standard representation $V$ for $A_n \; (n\ge1)$ with highest weight $\omega_1$,  
\item the dual representation $V^{\ast}$ of $V$ for $A_n \; (n\ge1)$ with highest weight $\omega_n$,  
\item the symmetric tensor $S^{2}(V)$ of $V$ for $A_1$ with highest weight $2\omega_1$,   
\end{itemize}
\item $B_n \; (n\ge 2)$ 
\begin{itemize}
\item  the standard representation $V$ for $B_n \; (n\ge2)$ with highest weight $\omega_1$.  
\end{itemize} 
\end{enumerate}
\end{theorem}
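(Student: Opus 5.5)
By Proposition~\ref{prop:densethickirreducible}, every dense representation is thick. So I would take the list of Theorem~\ref{th:listofthick} as the set of candidates and test each entry for denseness, that is, for irreducibility of $\bigwedge^m V$ for $0<m<\dim V$. By Proposition~\ref{prop:mthickn-mthick} (or the perfect pairing $\bigwedge^i V\otimes\bigwedge^j V\to\bigwedge^{n}V$), it suffices to treat $m\le \dim V/2$. The proof then splits into the candidates that survive and the candidates that must be discarded.

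\emph{Surviving cases.}
\begin{itemize}
\item The trivial $1$-dimensional representation is vacuously dense.
\item For $A_n$ with $V={\Bbb C}^{n+1}$, the representation $\bigwedge^m V$ is the fundamental representation with highest weight $\omega_m$. Hence it is irreducible. The same holds for $V^{\ast}$, since $\bigwedge^m V^{\ast}\cong(\bigwedge^m V)^{\ast}$.
\item For $S^2(V)$ of $A_1$, the space is $3$-dimensional, so Corollary~\ref{cor:whendimle3} applies.
\item For $B_n$ with $V={\Bbb C}^{2n+1}$, I would show that $\bigwedge^m V$ is irreducible for $1\le m\le n$. Its highest weight is $\varepsilon_1+\cdots+\varepsilon_m$, which is $\omega_m$ for $m<n$ and $2\omega_n$ for $m=n$. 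Irreducibility amounts to checking that the irreducible module with this highest weight has dimension $\binom{2n+1}{m}$, using the Weyl dimension formula. Alternatively, I would use the classical fact that $\bigwedge^m$ of the standard representation of $\mathrm{SO}(N)$ is irreducible for $m<N/2$, which applies here since $N=2n+1$ is odd. This dimension check is the only genuinely computational step, and I expect it to be the main obstacle, though it is classical.
\end{itemize}

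\emph{Discarded cases.} For each remaining entry I exhibit a reducible $\bigwedge^2$.
\begin{itemize}
\item For $S^m(V)$ of $A_1$ with $m\ge 3$, the Clebsch--Gordan (plethysm) decomposition gives
\[
\textstyle\bigwedge^2 S^m V\cong \bigoplus_{k\ge 0} S^{2m-2-4k}V .
\]
This contains both $S^{2m-2}V$ and $S^{2m-6}V$, so it is reducible. Concretely, the weights of $\bigwedge^2 S^mV$ show that the highest-weight module $S^{2m-2}V$ has dimension $2m-1<\binom{m+1}{2}$.
\item For the spin representation of $B_2$, which is the standard $4$-dimensional representation of $C_2\cong B_2$, and for the standard representation of $C_n$ with $n\ge 3$, the invariant symplectic form gives a trivial summand in $\bigwedge^2 V$. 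Hence $\bigwedge^2 V$ is reducible.
\item For the $7$-dimensional representation of $G_2$, $\bigwedge^2 V$ has dimension $21$. It contains the adjoint representation, of dimension $14$, as a proper submodule; in fact $\bigwedge^2 V\cong V\oplus\mathfrak g_2$. Alternatively, the invariant $3$-form gives a nonzero equivariant map $\bigwedge^2 V\to V^{\ast}\cong V$, which cannot be an isomorphism since the dimensions differ.
\end{itemize}
In each of these cases $2\le \dim V/2$, so $2$-denseness fails.

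Combining the two parts gives exactly the list in the statement.
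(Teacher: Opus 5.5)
The paper gives no proof of this statement: it is quoted from \cite[Theorem~3.7]{NO2} as background, so there is no in-paper argument to compare yours against.

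Your proposal is a correct derivation of the statement from Theorem~\ref{th:listofthick}, which is itself imported from \cite{NO2}. Since dense implies thick (Proposition~\ref{prop:densethickirreducible}), only the entries of that list need to be tested, and each of your verdicts is right.

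\emph{Survivors.} You use two standard facts: every $\bigwedge^m$ of the standard module of $\mathfrak{sl}_{n+1}$ is irreducible, and every $\bigwedge^m$ of the standard module of $\mathfrak{so}_{2n+1}$ is irreducible. The latter is classical, with highest weight $\omega_m$ for $m<n$ and $2\omega_n$ for $m=n$. So the step you flag as the main obstacle needs no new computation. Corollary~\ref{cor:whendimle3} then handles $S^2V$ of $A_1$.

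\emph{Discarded entries.} Each one already fails $2$-denseness, as you show:
\begin{itemize}
\item For $S^mV$ of $A_1$ with $m\ge 3$, the plethysm gives $\bigwedge^2 S^mV\cong S^{2m-2}V\oplus S^{2m-6}V\oplus\cdots$. Equivalently, your count $\binom{m+1}{2}-(2m-1)=\frac{(m-1)(m-2)}{2}>0$ settles it.
\item For $C_n$, and for the spin module of $B_2\cong C_2$, the invariant symplectic form gives a trivial summand of $\bigwedge^2 V$.
\item For $G_2$, you use $\bigwedge^2V\cong V\oplus\mathfrak g_2$.
\end{itemize}

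The only caveat is that your argument is exactly as complete as Theorem~\ref{th:listofthick}. Its proof, via the weight criterion of Theorem~\ref{th:WMF}, is the substantive input. Granting it, nothing is missing.
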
 

\begin{remark}\rm 
Through the theorems above, we see that irreducible, thick, and dense representations do not necessarily coincide. 
However, for the symmetric group, thick representations and dense representations coincide by 
Theorem~\ref{th:main1}.  
\end{remark}

By the following theorems, we cannot essentially obtain any new thick representations using $\bigwedge^m$, $S^m$, or $\otimes_{k}$. 

\begin{theorem}[{\cite{NO4} and \cite[Theorems~11 and 12]{NO3}}]\label{th:exteriorsymmetric}
Let $\rho : G \to {\rm GL}(V)$ be an $n$-dimensional representation of a group $G$ over a field $k$.  
\begin{enumerate}
\item\label{item:exterior} If $n\ge 4$ and $2\le m \le n-2$, then the exterior product 
$\bigwedge^m V$ is not $3$-thick as 
a representation of $G$.  
\item\label{item:symmetric} If $n\ge 3$ and $m\ge 2$, then the symmetric tensor 
$S^m(V)$ is not $3$-thick as a representation of $G$.  
\end{enumerate}
\end{theorem}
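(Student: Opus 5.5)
Since $G$ acts on $\bigwedge^m V$ and $S^m(V)$ only through $\rho(G)\subseteq {\rm GL}(V)$, it suffices to show that these spaces are not $3$-thick as representations of the whole group ${\rm GL}(V)$. I therefore plan to construct explicit subspaces $V_1$ with $\dim V_1=3$ and $V_2$ with ${\rm codim}\, V_2=3$ such that $(gV_1)\cap V_2\neq 0$ for every $g\in{\rm GL}(V)$. I will write $V_2=U^{\perp}$ for a $3$-dimensional subspace $U$ of the dual space, namely $\bigwedge^{n-m}V$ under the wedge pairing, or $S^m(V^{\ast})$ for the symmetric power. The condition $(gV_1)\cap U^{\perp}\neq0$ then says exactly that the $3\times3$ pairing matrix between $gV_1$ and $U$ is singular. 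So the whole task is to pick $V_1$ and $U$ so that this matrix is degenerate for every $g$.

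\textbf{Exterior powers.} Since $2\le m\le n-2$, choose a $3$-dimensional subspace $A\subseteq V$ and a decomposable $\omega\in\bigwedge^{m-2}V$. Put $V_1=\bigwedge^2A\wedge\omega$. Similarly choose a $3$-dimensional $B\subseteq V$ and a decomposable $\eta\in\bigwedge^{n-m-2}V$, and put $U=\bigwedge^2B\wedge\eta$.

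Replacing $A,\omega$ by $gA,g\omega$, it is enough to prove that for arbitrary such data $A,\omega,B,\eta$ the bilinear form
\[
\bigwedge^2A\times\bigwedge^2B\to\bigwedge^nV,\qquad (x,y)\mapsto x\wedge\omega\wedge y\wedge\eta
\]
has a nonzero left kernel vector.

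If the $n-4$ vectors underlying $\omega$ and $\eta$ are linearly dependent, then $\omega\wedge\eta=0$, the form vanishes identically, and we are done. Otherwise, let $L$ be their span and pass to the $4$-dimensional quotient $Q=V/L$, with images $\bar A,\bar B$. Then the form factors through the wedge pairing $\bigwedge^2\bar A\times\bigwedge^2\bar B\to\bigwedge^4Q$.

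If $\dim\bar A<3$, then $\dim\bigwedge^2\bar A<3$, so the map $\bigwedge^2A\to\bigwedge^2\bar A$ has nonzero kernel, and any vector in that kernel lies in the left kernel of the form. If $\dim\bar A=3$, and also if $\dim\bar B<3$, then $\dim(\bar A\cap\bar B)\ge 2$ inside the $4$-dimensional space $Q$. Take independent $c,c'\in\bar A\cap\bar B$ and lift $c\wedge c'\in\bigwedge^2\bar A$ to $\bigwedge^2A$. For any $b,b'\in\bar B$, the element $c\wedge c'\wedge b\wedge b'$ consists of four vectors in a space $\bar B$ of dimension at most $3$, so it vanishes. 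Hence the lift of $c\wedge c'$ lies in the left kernel of the form, the matrix is singular, and part~(\ref{item:exterior}) follows.

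\textbf{Symmetric powers.} For part~(\ref{item:symmetric}) I intend to imitate this construction, using subspaces built from a $2$- or $3$-dimensional subspace $A\subseteq V$ times a fixed power of one vector. The first candidate is
\[
V_1=x^{m-2}\cdot S^2(A)\quad\text{with }\dim A=2,
\]
with $U\subseteq S^m(V^{\ast})$ of the same shape, and the second is
\[
V_1=x^{m-1}A\quad\text{with }\dim A=3,
\]
with $U=\eta^{m-1}B$ in $S^m(V^{\ast})$. The pairing of products of linear forms is given by a permanent-type formula. For $x^{m-1}a$ against $\eta^{m-1}\beta$ it is, up to nonzero constants,
\[
\eta(x)^{m-2}\bigl(p\,\eta(a)\beta(x)+q\,\eta(x)\beta(a)\bigr).
\]
I would compute the resulting $3\times3$ matrix for each candidate and look for a forced kernel vector, in the same way as the dimension count $\dim(\bar A\cap\bar B)\ge2$ did above. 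When $\eta(x)=0$ and $m\ge3$, every entry vanishes, so the matrix is trivially singular; the remaining case $\eta(x)\neq0$ is where the real work lies.

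I expect this symmetric case to be the main obstacle. There are two reasons: decomposable-type elements of $S^m$ do not fill linear subspaces as conveniently as $\bigwedge^2$ of a $3$-space does, and the natural pairing $A\times B\to{\Bbb K}$ is generically nondegenerate. If neither ansatz works, my fallback is to choose $V_1$ and $U$ so that the pairing matrix is forced to have rank at most $2$ by a dimension count. For instance, I would take $V_1\subseteq x\cdot S^{m-1}(V)$ and $U\subseteq S^{m}(H)$ with $H$ a hyperplane of $V^{\ast}$, and exploit the fact that $gx$ and $H$ always interact through a single linear condition.
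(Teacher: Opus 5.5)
\textbf{Part (1).} Your argument is correct, but it takes a route dual to the paper's. The paper fixes a basis and takes $W_1=e_1\wedge\cdots\wedge e_{m-1}\wedge\langle e_m,e_{m+1},e_{m+2}\rangle$. It lets $W_2$ be spanned by all basis vectors except three; in your language, $W_2=U^{\perp}$ with $U=\langle e_{m-1},e_m,e_{m+1}\rangle\wedge e_{m+2}\wedge\cdots\wedge e_n$. So the paper pairs two ``pencils'' (a fixed decomposable vector wedged with a $3$-dimensional space), where you pair two ``nets'' $\bigwedge^2(\mbox{3-space})\wedge(\mbox{fixed})$.

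For pencils, suppose the product of the two fixed parts is nonzero. Then their factors span a subspace of codimension $2$, which the $3$-dimensional $g\langle e_m,e_{m+1},e_{m+2}\rangle$ must meet. This is the paper's remark that $f_1,\ldots,f_{m+2}$ lie in an $(m+1)$-dimensional space. Your $4$-dimensional quotient with $\dim(\bar A\cap\bar B)\ge 2$ is the same mechanism: the $3\times 3$ pairing matrix factors through something too small. You apply it to the dual configuration, since passing from $V$ to $V^{\ast}$ turns pencils into nets. Your version is coordinate-free, the paper's is slightly shorter, and both are characteristic-free.

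Two small repairs are needed:
\begin{itemize}
\item Choose $A$ complementary to the span of the factors of $\omega$, and likewise $B$ for $\eta$, so that $V_1$ and $U$ really are $3$-dimensional.
\item The sentence asserting $\dim(\bar A\cap\bar B)\ge 2$ when $\dim\bar B<3$ is false as written. The intersection count is needed only when $\dim\bar A=\dim\bar B=3$. If $\dim\bar B<3$, the form factors through $\bigwedge^2\bar B$, which has dimension $\le 1$, exactly as in your case $\dim\bar A<3$.
\end{itemize}

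\textbf{Part (2).} This part is a genuine gap. You give two candidates and a fallback, but no proof, and neither candidate works as you pair them. (The paper itself proves only (1) and refers elsewhere for (2), but the statement includes (2).)
\begin{itemize}
\item First candidate: $V_1=x^{m-2}S^2(A)$ against $U=\xi^{m-2}S^2(B)$, with $x\notin A$ and $\xi\notin B$. Some $g$ moves $(x,A)$ to $(e_3,\langle e_1,e_2\rangle)$, where $(\xi,B)=(e_3^{\ast},\langle e_1^{\ast},e_2^{\ast}\rangle)$ in dual bases. The pairing matrix then becomes diagonal with nonzero entries in characteristic $0$, so $gV_1\cap U^{\perp}=0$.
\item Second candidate: $V_1=x^{m-1}A$ against $U=\eta^{m-1}B$. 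Take $n=3$, so $A=V$ and $B=V^{\ast}$. Your entry formula gives a determinant equal to a nonzero constant times $\eta(gx)^{3m-3}$ in characteristic $0$. Hence every $g$ with $\eta(gx)\neq 0$ separates $gV_1$ from $U^{\perp}$.
\end{itemize}
The fallback is only a heuristic.

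The missing idea is to combine your two candidates.
\begin{itemize}
\item Keep $V_1=x^{m-2}S^2(A)$ with $\dim A=2$.
\item Let $V_2$ be spanned by all degree-$m$ monomials in a basis $e_1,\ldots,e_n$ except $e_1^m$, $e_1^{m-1}e_2$, $e_1^{m-1}e_3$. This uses $n\ge 3$. In characteristic $0$ it is your second $U^{\perp}$ with $\eta=e_1^{\ast}$ and $B=\langle e_1^{\ast},e_2^{\ast},e_3^{\ast}\rangle$.
\item $V_2$ contains the entire degree-$m$ part of the ideal $(e_2,\ldots,e_n)^2$.
\item For every $g$, the plane $gA$ meets the hyperplane $\langle e_2,\ldots,e_n\rangle$ in some $r\neq 0$. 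Then $(gx)^{m-2}r^2$ is a nonzero element of $gV_1\cap V_2$, because the symmetric algebra is a domain.
\end{itemize}
This argument works over any field. It also avoids the fact that the natural pairing of $S^m(V)$ with $S^m(V^{\ast})$ is degenerate when $0<{\rm char}\,k\le m$.
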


\proof 
Here, we prove only (\ref{item:exterior}). For the proof of (\ref{item:symmetric}), see \cite{NO4}. 
Let $\{ e_1, e_2, \ldots, e_n \}$ be a basis of $V$ over $k$. 
Then ${\mathcal B} = \{ e_{i_1} \wedge e_{i_2} \wedge \cdots \wedge e_{i_m} \mid 1 \le i_1 < i_2 < \cdots < i_m \le n \}$ 
is a basis of $\bigwedge^m V$. We define subsets $S_1$ and $S_2$ of ${\mathcal B}$ by 
\[
S_1  =   \{ e_1 \wedge e_2 \wedge \cdots \wedge e_{m-1} \wedge e_i \mid i=m, m+1, m+2 \}  
\] and 
\begin{multline*} 
 S_2  =  \{ e_1 \wedge e_2 \wedge \cdots \wedge e_{m-1} \wedge e_m,  \; e_1 \wedge e_2 \wedge \cdots \wedge e_{m-1} \wedge e_{m+1},  \; 
 e_1 \wedge e_2 \wedge \cdots \wedge e_{m-2} \wedge e_{m} \wedge e_{m+1} \},  
\end{multline*}
respectively. 
Let $W_1$ be the subspace of $\bigwedge^m V$ spanned by $S_1$ and $W_2$ the subspace 
of $\bigwedge^m V$ spanned by 
${\mathcal B} \setminus S_2$.  Note that $\dim_{k} W_1=3$ and $\dim_{k} W_2 = \binom{n}{m} -3$. 

Let us show that $(\rho(g)W_1)\cap W_2 \neq 0$ for any $g \in G$. Setting $x_i = e_1 \wedge e_2 \wedge \cdots \wedge e_{m-1} \wedge e_{m+i-1} \in W_1$ for $i=1, 2, 3$, we have $W_1 = kx_1 \oplus kx_2 \oplus kx_3$.  
As in Definition~\ref{def:vector}, we 
denote by $\bigwedge^3 (\rho(g)W_1)$ and $\bigwedge^{\binom{n}{m}-3} W_2$ the non-zero vectors of 
$\bigwedge^3 (\bigwedge^m V)$ and $\bigwedge^{\binom{n}{m}-3} (\bigwedge^m V)$ defined by $\rho(g)W_1$ and $W_2$  (up to scalar multiplication), respectively. We may write $\bigwedge^3 (\rho(g)W_1) = \rho(g)x_1\wedge \rho(g)x_2\wedge \rho(g)x_3$.  
For $1 \le i \le m+2$, we can write 
\[
\rho(g)e_i = f_i +h_i,  
\]
where $f_i \in \oplus_{j=1}^{m+1} ke_j$ and $h_i \in  \oplus_{j=m+2}^{n} ke_j$. 
Note that if some $i_j$ of $\{ i_1, i_2, \ldots, i_m \} \subset \{ 1, 2, \ldots ,n \}$ is larger than $m+1$, then $e_{i_1} \wedge e_{i_2} \wedge \cdots \wedge e_{i_m} \in W_2$. In particular, 
\[
\rho(g)x_i = f_1\wedge f_2 \wedge \cdots \wedge f_{m-1} \wedge f_{m+i-1} + w_i
\]
for $i=1, 2, 3$, where $w_i \in W_2$. 
Then 
\begin{eqnarray*}
& & \bigwedge^3(\rho(g)W_1) \wedge (\bigwedge^{\binom{n}{m}-3} W_2) \\
 & = & (f_1\wedge \cdots \wedge f_{m-1} \wedge f_{m} )\wedge (f_1\wedge \cdots \wedge f_{m-1} \wedge f_{m+1} )  \wedge (f_1\wedge \cdots \wedge f_{m-1} \wedge f_{m+2} ) \\ 
 &  & \hspace*{60ex} \wedge  (\bigwedge^{\binom{n}{m}-3} W_2). 
\end{eqnarray*}
On the other hand, since $f_1, f_2, \ldots, f_{m+2} \in \oplus_{j=1}^{m+1} ke_j$ and $\dim_k (\oplus_{j=1}^{m+1} ke_j) =m+1$,  $f_1, f_2, \ldots, f_{m+2}$ are linearly dependent in $V$. 
It is easy to see that 
\[
(f_1\wedge \cdots \wedge f_{m-1} \wedge f_{m} )\wedge (f_1\wedge \cdots \wedge f_{m-1} \wedge f_{m+1} ) \wedge (f_1\wedge \cdots \wedge f_{m-1} \wedge f_{m+2} )  = 0
\]
in $\bigwedge^3(\bigwedge^m V)$.  
This implies that $\bigwedge^3(\rho(g)W_1) \wedge (\bigwedge^{\binom{n}{m}-3} W_2) = 0$.  
Hence, $(\rho(g)W_1)\cap W_2 \neq 0$ for any $g \in G$. 
Therefore, $\bigwedge^m V$ is not $3$-thick. 
\qed

\begin{theorem}[{\cite{NO4} and \cite[Theorem~10]{NO3}}] 
Let $V, W$ be $G$-modules over a field $k$. If $\dim_{k} V\ge2$ and $\dim_{k} W\ge 2$, then 
$V\otimes_{k} W$ is not $2$-thick as a representation of $G$.  
\end{theorem}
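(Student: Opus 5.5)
The plan is to write down explicit subspaces directly from Definition~\ref{def:thickanddense}, in the spirit of the proof of Theorem~\ref{th:exteriorsymmetric}~(\ref{item:exterior}). Concretely, we will exhibit $U_1 \subseteq V\otimes_k W$ of dimension $2$ and $U_2$ of codimension $2$ such that $(g U_1)\cap U_2 \neq 0$ for every $g\in G$. Write $\rho$ and $\sigma$ for the actions on $V$ and $W$, so that $g$ acts on $V\otimes_k W$ by $\rho(g)\otimes\sigma(g)$.

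Choose bases $\{e_1,\dots,e_n\}$ of $V$ and $\{f_1,\dots,f_r\}$ of $W$, where $n,r\ge 2$. Then $\{e_i\otimes f_j\}$ is a basis of $V\otimes_k W$, and we let $f_1^{\ast}$ denote the coordinate functional on $W$ dual to $f_1$. We set
\[
U_1 = \langle e_1\otimes f_1,\; e_1\otimes f_2\rangle, \qquad U_2 = \langle e_i\otimes f_j \mid (i,j)\neq (1,1),(2,1)\rangle .
\]
Here $\dim_k U_1 = 2$ and $\dim_k U_2 = nr-2$; this is where $n\ge 2$ and $r\ge 2$ are used.

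The key observation concerns the image of $U_1$. For any $g$, it is $\rho(g)U_1 = v\otimes U$, where $v=\rho(g)e_1 \neq 0$ and $U=\sigma(g)\langle f_1,f_2\rangle$ is a $2$-dimensional subspace of $W$. Since $\dim_k U=2$, the restriction of $f_1^{\ast}$ to $U$ has a nonzero kernel, so we can pick $0\neq u\in U$ with $f_1^{\ast}(u)=0$. The vector $v\otimes u$ is nonzero, and its expansion in the basis $\{e_i\otimes f_j\}$ has vanishing coefficient on every $e_i\otimes f_1$. In particular, the coefficients on $e_1\otimes f_1$ and $e_2\otimes f_1$ vanish, so $v\otimes u\in U_2$. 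Hence $(\rho(g)U_1)\cap U_2\neq 0$ for all $g\in G$, and $V\otimes_k W$ is not $2$-thick.

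The one non-routine step is the choice of the two excluded basis vectors of $U_2$. In matrix language, the two linear conditions defining $U_2$ restrict to the fiber $v\otimes U$ as the forms $v_1 f_1^{\ast}$ and $v_2 f_1^{\ast}$. These are always proportional, so they cut out a nonzero subspace of the $2$-dimensional space $U$. By contrast, excluding $e_1\otimes f_1$ and $e_1\otimes f_2$ would give the forms $v_1 f_1^{\ast}$ and $v_1 f_2^{\ast}$, which are generically independent, and the argument would fail. Once this choice is made, the rest of the argument is immediate.
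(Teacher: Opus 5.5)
Your proof is correct. The paper does not prove this statement itself; it is quoted from the cited references. So there is no in-text argument to match against. What you give is a complete, self-contained proof in the same explicit style the paper uses for Theorem~\ref{th:exteriorsymmetric}~(\ref{item:exterior}).

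The essential point is that every translate of $U_1$ has the form $v\otimes U$ with $\dim_k U=2$. Then $U\cap\ker f_1^{\ast}\neq 0$ produces a nonzero vector of $V\otimes\ker f_1^{\ast}\subseteq U_2$, and nothing about $k$ or $G$ is used.

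The same computation can be phrased through Proposition~\ref{prop:condofm-thick}. The span $A$ of all $(v\otimes w)\wedge(v\otimes w')$ is a $G$-invariant realizable subspace of $\bigwedge^2(V\otimes_k W)$. Its orthogonal complement contains $\bigwedge^{nr-2}U_2$, hence is realizable too. Your observation that $(e_1\otimes f_1)^{\ast}$ and $(e_2\otimes f_1)^{\ast}$ restrict to proportional forms on each $v\otimes U$ is exactly the vanishing of the relevant $2\times 2$ determinant. Your direct version has the advantage of requiring no analysis of $\bigwedge^2(V\otimes_k W)$, and it is visibly characteristic-free.

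It also proves more than stated. The vector you find already lies in $V\otimes\langle f_2,\ldots,f_r\rangle$, which has dimension $n(r-1)$, so $V\otimes_k W$ is not $(2,n(r-1))$-thick. By Proposition~\ref{prop:ijthick}, and the same construction with $V$ and $W$ interchanged, it is therefore not $m$-thick for any $2\le m\le\max\{n,r\}$.

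One cosmetic point: you introduced $\rho$ as the action on $V$, so the translates should be written $(\rho\otimes\sigma)(g)U_1$ rather than $\rho(g)U_1$.
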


\begin{lemma}[{\cite{NO4}}]\label{lemma:grouphomnon-thick}
Let $\varphi : G' \to G$ be a group homomorphism. 
Let $\rho : G \to {\rm GL}(V)$ be a finite-dimensional representation over a field $k$.  
If $V$ is not thick (resp. not dense), then neither is $\varphi^{\ast} V$. 
Moreover, if $\varphi$ is a group isomorphism, then $V$ is thick (resp. dense) if and only if  so is $\varphi^{\ast} V$. 
\end{lemma}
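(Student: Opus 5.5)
The plan is to prove both assertions straight from the definitions. For $g\in G'$ we have $(\varphi^{\ast}\rho)(g')=\rho(\varphi(g'))$, so the set of operators $\{(\varphi^{\ast}\rho)(g')\mid g'\in G'\}$ equals $\rho(\varphi(G'))$. This is a subset of $\rho(G)$, and it is all of $\rho(G)$ when $\varphi$ is surjective, in particular when $\varphi$ is an isomorphism. Every condition in Definition~\ref{def:thickanddense} depends on the representation only through this set of operators, so both claims reduce to monotonicity in that set.

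\emph{Thickness.} Suppose $V$ is not thick as a $G$-module. Then there are some $0<m<n$ and subspaces $V_1,V_2$ with $\dim V_1=m$ and $\dim V_2=n-m$ such that $(\rho(g)V_1)\cap V_2\neq 0$ for all $g\in G$. Taking $g=\varphi(g')$ gives $((\varphi^{\ast}\rho)(g')V_1)\cap V_2\neq 0$ for every $g'\in G'$. Hence $\varphi^{\ast}V$ is not $m$-thick, and so it is not thick.

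\emph{Denseness.} Suppose $V$ is not dense. Then some $\bigwedge^m V$ with $0<m<n$ has a proper nonzero $G$-invariant subspace $W$. We have $\bigwedge^m(\varphi^{\ast}\rho)=\varphi^{\ast}(\bigwedge^m\rho)$, because $\bigwedge^m$ is applied to each operator separately. It follows that $W$ is invariant under $\bigwedge^m\rho(\varphi(g'))$ for all $g'\in G'$. So $\bigwedge^m(\varphi^{\ast}V)$ is reducible, and $\varphi^{\ast}V$ is not dense.

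\emph{The isomorphism case.} Apply the first part to $\varphi^{-1}:G\to G'$. Since $(\varphi^{-1})^{\ast}\varphi^{\ast}V=V$, if $\varphi^{\ast}V$ is not thick (resp.\ not dense) then neither is $V$. Combined with the first part, this gives the equivalence.

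No step is a real obstacle. The only point to check is the identity $\bigwedge^m(\rho\circ\varphi)=(\bigwedge^m\rho)\circ\varphi$, and it holds by functoriality.
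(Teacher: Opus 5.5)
Your proof is correct and is the same argument as the paper's, which simply says the lemma is immediate from the definitions of thickness and denseness. You unpack those definitions in full, and the isomorphism case via $\varphi^{-1}$ (surjectivity of $\varphi$ would in fact suffice, as you note) is handled correctly; the only slip is the typo $g\in G'$ for $g'\in G'$ in your first line.
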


\proof 
The statement is immediate from the definitions of thickness and denseness. 
\qed 

\bigskip 

Lemmas~\ref{lemma:binomineq-1} and \ref{lemma:binomineq} will be used in Sections~\ref{section:s=2} and~\ref{section:s=3}, and in Section~\ref{section:denserepresentations}, respectively.

\begin{lemma}\label{lemma:binomineq-1} 
For $0 \le r_1 \le r_2 \le \left[\dfrac{n}{2}\right]$, the following inequality holds: 
\[
\binom{n}{r_1} \le \binom{n}{r_2}. 
\] 
\end{lemma}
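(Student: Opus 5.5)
The plan is to prove the unimodality of the binomial coefficients on the range $0\le r\le [n/2]$ by comparing consecutive terms. By transitivity it suffices to show
\[
\binom{n}{r} \le \binom{n}{r+1} \quad \text{for } 0 \le r < \left[\dfrac{n}{2}\right],
\]
since then $\binom{n}{r_1}\le\binom{n}{r_1+1}\le\cdots\le\binom{n}{r_2}$. The case $r_1=r_2$ is trivial.

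For the step, I would use the standard ratio identity
\[
\binom{n}{r+1} = \binom{n}{r}\cdot\dfrac{n-r}{r+1},
\]
valid for $0\le r\le n-1$. With this identity, the inequality reduces to $n-r\ge r+1$, that is, $2r+1\le n$. This holds because $r\le [n/2]-1$ gives $2r+2\le 2[n/2]\le n$. Since all quantities are positive, this completes the argument.

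There is no real obstacle. The only point needing care is that the range of $r$ is handled correctly at the endpoint: we only need $r+1\le [n/2]$, not $r\le [n/2]$. I would also note that $n\ge 1$ whenever the chain is nonempty, so dividing by $\binom{n}{r}>0$ is legitimate.
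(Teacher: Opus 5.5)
Your proof is correct, but it takes a slightly different route from the paper.

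\textbf{Your route.} You telescope adjacent comparisons: you use $\binom{n}{r+1}=\binom{n}{r}\cdot\frac{n-r}{r+1}$ and check $2r+1\le n$ at each step.

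\textbf{The paper's route.} The paper compares $\binom{n}{r_1}$ and $\binom{n}{r_2}$ in one step. It rewrites the inequality as $\frac{r_2!}{r_1!}\le\frac{(n-r_1)!}{(n-r_2)!}$. It then pairs the $r_2-r_1$ factors on each side as $r_2-k\le n-r_1-k$ for $0\le k\le r_2-r_1-1$.

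\textbf{What each buys.} Both arguments compare the same factors $r_1+1,\dots,r_2$ against $n-r_2+1,\dots,n-r_1$; only the pairing differs. The paper's pairing needs only $r_1+r_2\le n$, not $r_2\le[n/2]$. So it actually proves $\binom{n}{r_1}\le\binom{n}{r_2}$ for all $r_1\le r_2$ with $r_1+r_2\le n$. That stronger form is what the paper invokes later. For instance, it uses $\binom{6}{2}\le\binom{6}{4}$ for $\lambda=(6,4)$, where $4>[6/2]$.

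Your stepwise chain reaches only $r_2\le(n+1)/2$ directly. To get the stronger form, add the symmetry $\binom{n}{r_2}=\binom{n}{n-r_2}$ and apply your result to $r_1\le n-r_2\le[n/2]$. In exchange, your argument is the standard unimodality proof and makes the endpoint bookkeeping explicit, which the paper leaves implicit.
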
 

\proof
Since 
\begin{align*}
\binom{n}{r_1} \le \binom{n}{r_2} & \Longleftrightarrow \dfrac{n!}{r_1!(n-r_1)!} \le \dfrac{n!}{r_2!(n-r_2)!} \\
& \Longleftrightarrow \dfrac{r_2!}{r_1!} \le \dfrac{(n-r_1)!}{(n-r_2)!} \\
& \Longleftrightarrow \overbrace{r_2(r_2-1)\cdot\cdots\cdot (r_1+1)}^{(r_2-r_1)\mbox{-terms}} \le \overbrace{(n-r_1)(n-r_1-1)\cdot\cdots\cdot (n-r_2+1)}^{(r_2-r_1)\mbox{-terms}}
\end{align*} 
and $r_2-k \le n-r_1-k$ for $0\le k \le r_2-r_1-1$,   
the inequality holds. 
\qed 

\begin{lemma}\label{lemma:binomineq} 
The following inequalities hold: 
\begin{align}
\binom{2m}{m} & \ge 2^{m}  \quad (m\ge1), \label{ineq:even} \\
\binom{2m+1}{m} & \ge 2^{m+1}  \quad (m\ge 2). \label{ineq:odd}  
\end{align}
\end{lemma}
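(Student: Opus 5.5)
I will prove both inequalities by induction on $m$, comparing how each side grows when $m$ increases by one. The right-hand sides are exactly doubled at each step, so it suffices to show that the binomial coefficients grow by a factor of at least $2$.

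For (\ref{ineq:even}), the base case $m=1$ reads $\binom{2}{1}=2\ge 2^1$. For the inductive step, I use the ratio
\[
\binom{2m+2}{m+1}\Big/\binom{2m}{m} = \frac{(2m+2)(2m+1)}{(m+1)^2} = \frac{2(2m+1)}{m+1}.
\]
This ratio is at least $2$ because $2m+1\ge m+1$ for $m\ge 0$. Hence $\binom{2m+2}{m+1}\ge 2\binom{2m}{m}\ge 2^{m+1}$.

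For (\ref{ineq:odd}), the base case $m=2$ reads $\binom{5}{2}=10\ge 2^3=8$. For the inductive step, the analogous ratio is
\[
\binom{2m+3}{m+1}\Big/\binom{2m+1}{m} = \frac{(2m+3)(2m+2)}{(m+1)(m+2)} = \frac{2(2m+3)}{m+2}.
\]
This is at least $2$ because $2m+3\ge m+2$. Hence $\binom{2m+3}{m+1}\ge 2\binom{2m+1}{m}\ge 2^{m+2}$, which completes the induction.

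The only delicate point is the starting value of the odd case. At $m=1$ the inequality fails, since $\binom{3}{1}=3<4$, which explains the hypothesis $m\ge 2$. The base check $\binom{5}{2}=10\ge 8$ is what launches that induction.

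As a consistency check, (\ref{ineq:odd}) also follows from (\ref{ineq:even}) for $m\ge 2$. The identity $\binom{2m+1}{m}=\frac{2m+1}{m+1}\binom{2m}{m}$ gives $\binom{2m+1}{m}\ge \frac{2m+1}{m+1}2^m$. However, $\frac{2m+1}{m+1}<2$, so this bound does not reach $2^{m+1}$, and the separate induction above is still needed.
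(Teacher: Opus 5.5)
Your proof is correct, but it takes a different route from the paper. You argue by induction. You compute the consecutive ratios $\binom{2m+2}{m+1}/\binom{2m}{m}=\frac{2(2m+1)}{m+1}$ and $\binom{2m+3}{m+1}/\binom{2m+1}{m}=\frac{2(2m+3)}{m+2}$, check that both are at least $2$, and verify the base cases. The paper instead gives a direct, non-inductive counting argument. In $X=\{a_1,b_1,\ldots,a_m,b_m,c\}$ it exhibits the $2^m$ transversals of the pairs $\{a_i,b_i\}$, together with the $m2^{m-1}$ sets $\{c\}\cup\{\ast_i\mid i\neq j\}$. This yields $\binom{2m+1}{m}\ge 2^m+m2^{m-1}=(m+2)2^{m-1}$. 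The even case is left as ``the same way,'' using the $2^m$ transversals alone.

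The counting argument reaches a slightly sharper explicit bound in one step, and the hypothesis $m\ge 2$ appears naturally as the condition $m2^{m-1}\ge 2^m$. Your ratio argument is more mechanical and writes out both inequalities explicitly. It locates the threshold in the failing base case $\binom{3}{1}=3<4$. It also shows that the coefficients grow by a factor tending to $4$, so there is considerable slack.

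One small point: your last paragraph opens by asserting that (\ref{ineq:odd}) follows from (\ref{ineq:even}). It then correctly shows that the resulting bound $\frac{2m+1}{m+1}2^m$ falls short of $2^{m+1}$. The first sentence contradicts the rest of that paragraph and should be dropped.
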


\proof
The inequality (\ref{ineq:even}) can be proved in the same way, so here we only prove (\ref{ineq:odd}). 
Let us consider how to choose $m$ elements from the set 
\[ 
 X=\{a_1, b_1, a_2, b_2, \ldots, a_m, b_m, c\}. 
\]   
Since there are $m$-element subsets 
$\{ \ast_i \mid  1\le i \le m \}$ ($\ast= a\mbox{ or } b$) and $X_j = \{c \} \cup \{ \ast_i \mid  1\le i \le m, i \neq j \}$ ($1\le j \le m$, $\ast= a\mbox{ or } b$) of $X$,  we obtain 
\[
\binom{2m+1}{m} \ge 2^m+m\cdot 2^{m-1} \ge 2^{m+1}
\] 
for $m \ge 2$. This completes the proof.  
\qed 

\section{Review of Specht modules}\label{section:Specht} 
In this section, we review Specht modules (for details, see \cite{Peel}).  Since the irreducible representations of the symmetric group can be treated concretely via 
Specht modules, we complete the classification of thick representations of the symmetric group in Sections~\ref{section:thickrepresentations}--\ref{section:sge4}.   

\begin{definition}[{{\it cf.} \cite{Iwahori} and \cite{Fulton}}]\rm
Let $n$ be a positive integer. 
To each partition $\lambda= (m_1, m_2, \ldots, m_s)$ of $n$ with $m_1\ge m_2\ge\cdots \ge m_s>0$ and  $n=m_1+m_2+\cdots +m_s$, we associate a Young diagram as follows.  
For example, for $\lambda=(4, 2, 1)$, 
the Young diagram is 
\[
{\small\begin{ytableau}
       \phantom{a}  & \phantom{a} & \phantom{a} & \phantom{a} \\
       \phantom{a}  & \phantom{a} \\
       \phantom{a}  \\
\end{ytableau}}\quad .  
\]
We denote $n=m_1+m_2+\cdots +m_s$ by $|\lambda|$.  
\end{definition}

\begin{definition}[{{\it cf.} \cite{Iwahori}, \cite{Fulton}, and \cite{JamesKerber}}]\label{def:lambda-tableau}\rm
For a partition  $\lambda= (m_1, m_2, \ldots, m_s)$ of $n$, 
a {\it Young tableau}, or a {\it $\lambda$-tableau}, is a tableau in which the entries are the numbers from $1$ to $n$, 
each occurring once. 
A {\it standard Young tableau}, or a {\it standard $\lambda$-tableau}, is a Young tableau which satisfies 
\begin{enumerate}
\item strictly increasing across each row,  
\item strictly increasing down each column. 
\end{enumerate}
For example, for the partition $\lambda=(4, 2, 1)$ of $7=4+2+1$, let us 
consider the following Young tableaux 
\[
(a)\; 
{\small\begin{ytableau}
       3 & 1  & 6 & 2\\
       7  & 5 \\
       4  \\
\end{ytableau}}, \quad 
(b)\;  
{\small\begin{ytableau}
       1  & 2  & 5 & 6\\
       3  & 4 \\
       7  \\
\end{ytableau}}. 
\]
(a) is a Young tableau, but not standard. 
(b) is a standard Young tableau.  
(Note that the usage of the term 
$\lambda$-tableau is slightly different from that in \cite{Peel}.) 
\end{definition}

\begin{definition}[{\cite[pages~88--89]{Peel}}]\rm
Suppose that $a_1, \ldots, a_t \in \{1, 2, \ldots, n\}$ occur in the $j$-th column of the $\lambda$-tableau $y$, with $a_i$ in the $i$-th row. We form the difference product 
\[
\Delta(x_{a_1}, \ldots, x_{a_t}) = \prod_{i<j} (x_{a_i}-x_{a_j})
\]
if $t>1$, and $\Delta(x_{a_1})=1$ if $t=1$. 
Multiplying these difference products for all the columns of $y$, we obtain a polynomial 
$f(y) \in {\Bbb Z}[x_1, \ldots, x_n]$, called a {\it Specht polynomial}. 
For example, if $y$ is (a) or (b) in Definition~\ref{def:lambda-tableau},  
then 
\begin{align*}
(a)\; &f(y)=(x_3-x_7)(x_3-x_4)(x_7-x_4)(x_1-x_5)  \quad \mbox{ and } \\
(b)\; &f(y)=(x_1-x_3)(x_1-x_7)(x_3-x_7)(x_2-x_4). 
\end{align*} 
\end{definition}

\begin{definition}[{\cite[page~89]{Peel}}]\rm
For a partition $\lambda$ of $n$, let 
\[ S^{\lambda}  = {\mathbb Z}\{ f(y) \mid y \mbox{ is a $\lambda$-tableau }\} 
\subset {\mathbb Z}[x_1, \ldots, x_n]. 
\]
Then $S_n$ canonically acts on $S^{\lambda}$. We call $S^{\lambda}$ the {\it Specht module} corresponding to 
$\lambda$. 
\end{definition}

\begin{theorem}[{\cite[Theorem~1.1]{Peel}}]\label{th:basisofSpecht} 
For a partition $\lambda$ of $n$, $S^{\lambda}$ has a ${\Bbb Z}$-basis 
\[
B^{\lambda} = \{ f(y) \mid y \mbox{ is a standard $\lambda$-tableau }\}.  
\]
\end{theorem}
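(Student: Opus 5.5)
The statement to prove is Peel's Theorem~1.1: the Specht polynomials $f(y)$ of standard $\lambda$-tableaux form a ${\Bbb Z}$-basis of $S^{\lambda}$. The argument splits into two halves, linear independence and spanning. Both are controlled by a suitable total order on monomials. Since $f(y)$ is a product of Vandermonde factors, it is homogeneous. I would order monomials lexicographically with $x_1 > x_2 > \cdots > x_n$.

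\textbf{Independence.} For a standard tableau $y$, every column is increasing downward. The leading monomial of the column factor $\prod_{i<j}(x_{a_i}-x_{a_j})$ is therefore $x_{a_1}^{t-1}x_{a_2}^{t-2}\cdots x_{a_{t-1}}$, with coefficient $\pm 1$. Hence the leading monomial of $f(y)$ is $\prod_{a} x_a^{\,t(a)-r(a)}$, where $r(a)$ is the row of $a$ and $t(a)$ is the length of its column; its coefficient is $\pm1$.

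The key step is to show that this exponent vector determines the standard tableau $y$. Equivalently, $y$ can be recovered from the map $a\mapsto t(a)-r(a)$ (the ``distance to the bottom of the column''). I would reconstruct $y$ by inserting $1,2,\ldots,n$ in order. A standard tableau is built by adding the entries $1,\dots,k$ one box at a time, and the value $t(a)-r(a)$ together with the growing shape should pin down the column of $a$. If that is awkward, I would instead use a different monomial order or Peel's ordering on tableaux (via column words) to guarantee distinct leading terms.

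Once the leading monomials are distinct with unit coefficients, the polynomials are linearly independent over ${\Bbb Z}$, and more generally over any field.

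\textbf{Spanning.} Here I would use a straightening argument. First, permuting within a column of $y$ changes $f(y)$ only by a sign, so we may assume columns increase. Next, if two adjacent columns have a row descent $a_i > b_i$, apply a Garnir relation. This is the polynomial identity obtained by antisymmetrizing over $\{a_i,\dots,a_t\}\cup\{b_1,\dots,b_i\}$: the alternating sum is a polynomial antisymmetric in $t+1$ variables while of too low degree in them, so it vanishes. The relation expresses $f(y)$ as a ${\Bbb Z}$-combination of $f(y')$ with $y'$ strictly larger in the column-dominance order on tableaux. Since there are finitely many tableaux, induction on this order terminates at standard tableaux.

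\textbf{Main obstacle.} I expect the main difficulty to be verifying the Garnir vanishing at the level of polynomials, and showing that it moves strictly up in the order; this is the technical heart. A count $\#\{\text{standard tableaux}\}=\dim V_\lambda$ over ${\Bbb Q}$ would give spanning rationally, but not integrally. So the Garnir straightening is still needed to get spanning over ${\Bbb Z}$.
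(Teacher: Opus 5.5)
The spanning half is the right idea. The paper does not reprove this theorem; it cites Peel and notes that Peel's proof runs through the Garnir relation, which is your straightening step. The genuine gap is in the independence half.

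With the lexicographic order $x_1>x_2>\cdots>x_n$, the leading monomial of $f(y)$ is indeed $\prod_a x_a^{t(a)-r(a)}$. But your key claim, that this exponent vector determines the standard tableau, is false. The exponent only records how many boxes lie below each entry in its column. So every entry at the bottom of a column gets exponent $0$, whichever column it sits in. For $\lambda=(n-1,1)$ the standard tableaux give $f(y)=x_1-x_j$ $(2\le j\le n)$, and all of these have leading monomial $x_1$. Already for $\lambda=(2,1)$ your insertion procedure stalls, since the entry $2$ has value $0$ in either box it could occupy. So this order does not separate the standard tableaux, and the independence argument as proposed fails.

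The repair is to reverse the variable order. Under the lexicographic order with $x_n>x_{n-1}>\cdots>x_1$, a column factor $x_a-x_b$ with $a$ above $b$ (so $a<b$ when $y$ is standard) has leading term $-x_b$. Hence the leading term of $f(y)$ is $\pm\prod_a x_a^{r(a)-1}$. This exponent vector records the row of every entry. Since the rows of a standard tableau increase, the row contents determine $y$. The leading monomials are then pairwise distinct with coefficients $\pm1$, which gives independence over $\mathbb{Z}$ and over every field.

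A smaller caution on the Garnir vanishing: the degree count has to be done variable by variable, not by total degree.
\begin{itemize}
\item Each of the $t+1$ variables being antisymmetrized has degree at most $t-1$ in $f(y)$. Those from the left column have degree $t-1$, and those from the right column have degree $s-1\le t-1$.
\item A nonzero polynomial alternating in $t+1$ variables is divisible by their Vandermonde, so it has degree at least $t$ in each of them. This is why the alternating sum vanishes.
\item The total degree in those variables can exceed $\binom{t+1}{2}$. For two columns of length $4$ with a descent in row $2$ it is $11>10$, so a total-degree count would not suffice.
\end{itemize}
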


\begin{notation}\rm 
Let ${\Bbb K}$ be a field of characteristic zero. 
For a partition $\lambda$ of $n$, we denote by 
$V_{\lambda}$ the Specht module $S^{\lambda}\otimes_{{\Bbb Z}}{\Bbb K}$. 
\end{notation}

\begin{remark}\rm In \cite[Section~7.2]{Fulton}, 
for a partition $\lambda$ of $n$, the irreducible representation $S^{\lambda}$ of $S_n$ is 
constructed using tabloids, and this is also called a Specht module. It is isomorphic to the Specht module in our sense, following \cite{Peel}.
\end{remark}

\begin{theorem}[{\cite[page~90 and Section~4]{Peel} and \cite[Section~7.2, Proposition~1]{Fulton}}]
For each partition $\lambda$ of $n$, $V_{\lambda}$ 
is an irreducible representation of $S_n$ over a field ${\Bbb K}$ of characteristic zero. 
Every irreducible representation of $S_n$ over ${\Bbb K}$ is 
isomorphic to exactly one $V_{\lambda}$. 
\end{theorem}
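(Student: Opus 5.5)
The plan is the classical James-style argument, rewritten for Peel's polynomial model. First I would make the structure of $f(y)$ explicit. Give each entry $a$ of a $\lambda$-tableau $y$ the exponent $r(a)-1$, where $r(a)$ is the row containing $a$, and set $m_y=\prod_a x_a^{r(a)-1}$. Let $C_y$ be the column stabilizer of $y$ and $\kappa_y=\sum_{\sigma\in C_y}{\rm sgn}(\sigma)\sigma$. Expanding each column's Vandermonde determinant gives $f(y)=\pm\kappa_y m_y$. The monomials $m_y$, as $y$ varies, span a permutation module $M^\lambda\subset {\Bbb K}[x_1,\ldots,x_n]$ isomorphic to the tabloid module. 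Hence $V_\lambda=S^\lambda\otimes{\Bbb K}\subseteq M^\lambda$ is the cyclic submodule generated by any single $f(y)$. Declaring the distinct monomials orthonormal gives an $S_n$-invariant symmetric bilinear form $\langle\ ,\ \rangle$ on $M^\lambda$.

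The key step, which I expect to be the main obstacle, is the sign lemma: for every $u\in M^\lambda$ one has $\kappa_y u\in{\Bbb K}f(y)$. I would prove it by the standard combinatorial argument. Take a monomial $m_{y'}$. If two entries in the same column of $y$ carry the same exponent in $m_{y'}$, then the transposition of those two entries lies in $C_y$ and fixes $m_{y'}$, so $\kappa_y m_{y'}=0$. Otherwise there is $\sigma\in C_y$ with $\sigma m_{y'}=m_y$, and then $\kappa_y m_{y'}=\pm f(y)$. The same argument applied to a $\mu$-monomial, with $\mu$ a second partition, shows that $\kappa_y m\neq0$ forces $\lambda\trianglerighteq\mu$ in the dominance order.

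Next comes the submodule theorem. Let $U\subseteq M^\lambda$ be a submodule. If $\kappa_y u\neq0$ for some $u\in U$ and some $y$, then $f(y)\in U$, so $V_\lambda\subseteq U$. Otherwise every $u\in U$ satisfies $\langle u,f(y)\rangle=\langle \kappa_y u,m_y\rangle=0$, using that $\kappa_y$ is self-adjoint up to sign; hence $U\subseteq V_\lambda^\perp$.

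To obtain irreducibility I need $V_\lambda\cap V_\lambda^\perp=0$. Since $S^\lambda$ is defined over ${\Bbb Q}$ and the monomials are orthonormal, the form restricted to $S^\lambda\otimes{\Bbb Q}$ is positive definite. Because $V_\lambda^\perp$ is cut out by rational equations, this anisotropy persists after extending scalars to ${\Bbb K}$. So any nonzero proper submodule $U$ of $V_\lambda$ would satisfy $U\subseteq V_\lambda\cap V_\lambda^\perp=0$, which is absurd. This argument works equally over $\overline{{\Bbb K}}$, so each $V_\lambda$ is absolutely irreducible and ${\rm End}(V_\lambda)={\Bbb K}$.

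For pairwise non-isomorphism, suppose $\theta:V_\lambda\to V_\mu$ is an isomorphism. Extend $\theta$ to a nonzero map $M^\lambda\to M^\mu$ by composing with the orthogonal projection onto $V_\lambda$, which exists by complete reducibility in characteristic zero. The dominance consequence of the sign lemma then gives $\lambda\trianglerighteq\mu$, and by symmetry $\mu\trianglerighteq\lambda$, so $\lambda=\mu$.

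Completeness is a counting argument. Over $\overline{{\Bbb K}}$, the number of irreducible representations equals the number of conjugacy classes of $S_n$, which is the number of partitions of $n$. The $V_\lambda\otimes\overline{{\Bbb K}}$ are pairwise non-isomorphic and absolutely irreducible, so they exhaust all irreducibles over $\overline{{\Bbb K}}$. It remains to descend to ${\Bbb K}$. By Maschke, any irreducible ${\Bbb K}S_n$-module $V$ is a summand of ${\Bbb K}S_n$. Then $V\otimes\overline{{\Bbb K}}$ is a sum of copies of the $V_\lambda\otimes\overline{{\Bbb K}}$, so ${\rm Hom}_{S_n}(V_\lambda,V)\otimes\overline{{\Bbb K}}\neq0$ for some $\lambda$. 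Hence ${\rm Hom}_{S_n}(V_\lambda,V)\neq0$ already over ${\Bbb K}$, and Schur's lemma gives $V\cong V_\lambda$.
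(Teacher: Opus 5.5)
Your argument is correct and is essentially the standard James--Fulton proof that the paper cites for this theorem (Fulton, Section~7.2, Proposition~1; the paper gives no proof of its own), transported to Peel's polynomial model via the identification $f(y)=\pm\kappa_y m_y$ of Specht polynomials with polytabloids. One slip needs fixing: what survives extension of scalars from $\mathbb{Q}$ to ${\Bbb K}$ is not anisotropy, since over $\mathbb{C}$ the form on $V_\lambda$ has isotropic vectors as soon as $\dim V_\lambda\ge 2$; what survives is nondegeneracy, because the Gram matrix of a rational basis of $V_\lambda$ has nonzero rational determinant, and that is all you need for $V_\lambda\cap V_\lambda^{\perp}=0$ (alternatively, Maschke together with the one-dimensionality of $\kappa_y V_\lambda={\Bbb K}f(y)$ gives irreducibility without any form).
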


\begin{definition}\rm
For a partition $\lambda$ of $n$, we set 
\begin{align*} 
Y_{\lambda} & = \{ \lambda\mbox{-tableau } \},   \\ 
Y^{\rm st}_{\lambda} & = \{ \mbox{ standard } \lambda\mbox{-tableau } \}.  
\end{align*} 
We denote by $d(\lambda)$ the dimension of 
$V_{\lambda}$. 
Note that $d(\lambda) = \sharp Y^{\rm st}_{\lambda}$ by Theorem~\ref{th:basisofSpecht}. 
\end{definition}

\begin{proposition}\label{prop:formuladim}
For $\lambda = (m_1, m_2)$, 
\begin{align}
d(m_1, m_2) & = \dfrac{(m_1+m_2)!(m_1-m_2+1)}{(m_1+1)!m_2!}. \label{eq:dm1m2} 
\end{align}
For $\lambda = (m_1, m_2, m_3)$, 
\begin{align}
d(m_1, m_2, m_3) & = \dfrac{(m_1+m_2+m_3)!(m_1-m_2+1)(m_1-m_3+2)(m_2-m_3+1)}{(m_1+2)!(m_2+1)!m_3!}. \label{eq:dm1m2m3} 
\end{align}
\end{proposition}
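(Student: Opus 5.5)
We would derive both formulas from the hook length formula, $d(\lambda) = n!/\prod_{b} h(b)$, where $h(b)$ is the hook length of the box $b$ in the Young diagram of $\lambda$. Since $d(\lambda)=\sharp Y^{\rm st}_{\lambda}$ by Theorem~\ref{th:basisofSpecht}, this is the standard count of standard $\lambda$-tableaux (\cite{Fulton}). The work is then to compute the product of hook lengths row by row and simplify it to the stated closed form.

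\textbf{Two rows.} For $\lambda=(m_1,m_2)$, the boxes $(2,j)$ of the second row have hook lengths $m_2-j+1$, and their product is $m_2!$. In the first row, the box $(1,j)$ with $j\le m_2$ has hook length $m_1-j+2$, and the box $(1,j)$ with $j>m_2$ has hook length $m_1-j+1$. The product over the first row is therefore
\[
\frac{(m_1+1)!}{(m_1-m_2+1)!}\cdot (m_1-m_2)! = \frac{(m_1+1)!}{m_1-m_2+1}.
\]
Dividing $n!$ by the product of the two rows gives (\ref{eq:dm1m2}).

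\textbf{Three rows.} For $\lambda=(m_1,m_2,m_3)$ the same bookkeeping applies.
\begin{itemize}
\item The third row contributes $m_3!$.
\item The second row contributes $(m_2+1)!/(m_2-m_3+1)$, by the two-row computation applied to rows $2$ and $3$.
\item In the first row, the box $(1,j)$ has hook length $m_1-j+3$ for $j\le m_3$, hook length $m_1-j+2$ for $m_3<j\le m_2$, and hook length $m_1-j+1$ for $j>m_2$.
\end{itemize}
The first-row product is then
\[
\frac{(m_1+2)!}{(m_1-m_3+2)!}\cdot\frac{(m_1-m_3+1)!}{(m_1-m_2+1)!}\cdot (m_1-m_2)! = \frac{(m_1+2)!}{(m_1-m_3+2)(m_1-m_2+1)}.
\]
Multiplying the three rows and dividing $n!$ by the result gives (\ref{eq:dm1m2m3}).

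\textbf{Avoiding the hook length formula.} If one prefers not to cite it, one can instead use the determinantal (Frobenius) formula $d(\lambda)= n!\prod_{i<j}(l_i-l_j)/\prod_i l_i!$ with $l_i=m_i+s-i$. For $s=2$ this gives $l=(m_1+1,m_2)$ and the factor $(m_1-m_2+1)$. For $s=3$ it gives $l=(m_1+2,m_2+1,m_3)$ and the three stated difference factors, which yields the formulas immediately. Alternatively, for $s=2$ one can count directly: a standard tableau corresponds to a ballot sequence, and the count is $\binom{n}{m_2}-\binom{n}{m_2-1}$, which simplifies to (\ref{eq:dm1m2}).

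The main obstacle is only the careful telescoping of the first-row hook products, which split into ranges according to the column lengths. No conceptual difficulty is expected.
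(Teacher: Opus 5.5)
Your proposal is correct and takes the same approach as the paper, which simply cites the hook length formula. Your row-by-row hook products, including the split of the first row into the ranges $j\le m_3$, $m_3<j\le m_2$, and $j>m_2$, are accurate and fill in the computation the paper leaves to the reader.
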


\proof
By \cite[page~50, Hook Length Formula~4.12]{FH}, we can calculate 
$d(m_1, m_2)$ and $d(m_1, m_2, m_3)$.   
\qed 

\bigskip 


Let us consider a $\lambda$-tableau 
\[
y= {\small\begin{ytableau}
       \cdots & a_1 & \cdots  & b_1 & \cdots \\
       \cdots & a_2  & \cdots & b_2 & \cdots  \\
        & \vdots & & \vdots & \vdots \\
        \cdots & a_s & \cdots & b_s \\  
        & \vdots \\
        \cdots & a_t   
\end{ytableau}} 
\]
with $t\ge s$ and $\{a_1, \ldots, a_t, b_1, \ldots, b_s\} \subseteq \{1, \ldots, n\}$ such that 
the columns $k$ and $l$ with $k<l$ contain $\{a_1, \ldots, a_t\}$ and 
$\{b_1, \ldots, b_s \}$, respectively.  
For $1\le q \le s$, denote by $S(q)$ the subgroup of $S_n$ consisting of permutations of 
$\{ a_q, \ldots, a_t, b_1, \ldots, b_q\}$. 
Let $C(y)$ be the set of all permutations $\sigma \in S_n$ such that for each $i$, $\sigma(i)$ and $i$ occur in the same column of $y$. Such a permutation is called a {\it column permutation} of $y$.  We also denote by $R(y)$ the subgroup of $S_n$ consisting of all {\it row permutations} of $y$.  
For a subset $T$ of $S_n$, we define $\alpha(T) = \sum_{\sigma\in T} ({\rm sgn} \sigma) 
 \sigma \in {\Bbb Z}[S_n]$. 

By \cite{Peel}, we have the following results: 

\begin{theorem}[{\cite[Theorem~3.1]{Peel}}]\label{th:PeelalphaSfy=0}
Let $y$ and $S(q)$ be as above. Then 
$\alpha(S(q)) f(y)=0$ in ${\Bbb Z}[x_1, \ldots, x_n]$. 
\end{theorem}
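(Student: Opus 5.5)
The plan is to prove the Garnir relation by expanding $f(y)$ as a signed sum of monomials, one for each column permutation, and then to show that the antisymmetrizer $\alpha(S(q))$ kills each monomial separately.

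For $i \in \{1,\ldots,n\}$ let $r(i)$ be the row of $y$ containing $i$, and put $m_y = \prod_{i=1}^n x_i^{\,r(i)-1}$. The first step is the identity
\[
f(y) = \alpha(C(y))\, m_y = \sum_{\pi\in C(y)} ({\rm sgn}\,\pi)\, \pi(m_y).
\]
To check it, take a column with entries $c_1,\ldots,c_u$ in rows $1,\ldots,u$. The Vandermonde determinant $\det(x_{c_j}^{\,i-1})$ equals $\pm\Delta(x_{c_1},\ldots,x_{c_u})$. Since $C(y)$ is the direct product of the symmetric groups on the columns, $\alpha(C(y))m_y$ is the product of these determinants, and hence equals $\pm f(y)$. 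A global sign does not affect the claim, so it suffices to prove $\alpha(S(q))\,\pi(m_y)=0$ for every $\pi\in C(y)$.

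Fix $\pi\in C(y)$ and set $A=\{a_q,\ldots,a_t,b_1,\ldots,b_q\}$, which has $t+1$ elements. In $\pi(m_y)$ the exponent of $x_i$ is $r(\pi^{-1}(i))-1$. We count these exponents on $A$.
\begin{itemize}
\item $\pi^{-1}$ preserves column $k$, so the $t-q+1$ elements $a_q,\ldots,a_t$ go to distinct entries of column $k$. Their rows are therefore distinct values in $\{1,\ldots,t\}$.
\item Likewise $b_1,\ldots,b_q$ go to distinct entries of column $l$. Their rows are $q$ distinct values in $\{1,\ldots,s\}\subseteq\{1,\ldots,t\}$, using $s\le t$.
\end{itemize}
So $t+1$ exponents take values in a set of $t$ values. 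By pigeonhole there are $c\neq c'$ in $A$ such that $x_c$ and $x_{c'}$ have the same exponent in $\pi(m_y)$. Because the two rows in each list are distinct, one of $c,c'$ is an $a$ and the other is a $b$.

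Let $\tau=(c\,c')$. Then $\tau\in S(q)$, and $\tau$ fixes $\pi(m_y)$, since it only swaps two variables with equal exponents. Split $S(q)$ into left cosets $\{\sigma,\sigma\tau\}$ of $\langle\tau\rangle$. Each pair contributes
\[
({\rm sgn}\,\sigma)\,\sigma\,\pi(m_y) + ({\rm sgn}\,\sigma\tau)\,\sigma\tau\,\pi(m_y) = ({\rm sgn}\,\sigma)\bigl(\sigma\pi(m_y)-\sigma\pi(m_y)\bigr)=0.
\]
Hence $\alpha(S(q))\pi(m_y)=0$. Summing over $\pi\in C(y)$ gives $\alpha(S(q))f(y)=0$ in ${\Bbb Z}[x_1,\ldots,x_n]$.

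The only delicate points are the sign conventions in the determinant identity and making sure the exponent count uses $\pi^{-1}$, not $\pi$. The heart of the argument is the pigeonhole step, which rests on $|A|=t+1$ exceeding the number $t$ of rows available.
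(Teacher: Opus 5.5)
Your proof is correct. The paper gives no proof of this statement: it quotes Peel's Theorem~3.1 and uses it only as input for the Garnir relation. So your argument supplies what the paper leaves to the reference.

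The steps check out:
\begin{enumerate}
\item The column groups commute and act on disjoint sets of variables. Hence $\alpha(C(y))m_y$ is the product of the column Vandermonde determinants, and $f(y)=\pm\alpha(C(y))m_y$ with a sign depending only on $\lambda$.
\item The exponent of $x_i$ in $\pi(m_y)$ is $r(\pi^{-1}(i))-1$.
\item $\pi^{-1}$ maps $\{a_q,\ldots,a_t\}$ and $\{b_1,\ldots,b_q\}$ injectively into columns $k$ and $l$. Since $s\le t$, the $t+1$ elements of $A$ receive exponents in the $t$-element set $\{0,\ldots,t-1\}$.
\item The pairing $\sigma\leftrightarrow\sigma\tau$ cancels $\alpha(S(q))\pi(m_y)$ with no division, so the identity holds in $\mathbb{Z}[x_1,\ldots,x_n]$.
\end{enumerate}
Your remark that $c$ and $c'$ lie in different columns is true but not needed.

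A common alternative argument avoids expanding $f(y)$ into monomials. Set $g=\alpha(S(q))f(y)$. It satisfies $\sigma g=({\rm sgn}\,\sigma)g$ for every $\sigma\in S(q)$, so it is divisible by the difference product of the $t+1$ variables $x_c$ ($c\in A$), which has degree $t$ in each of them. But each $\sigma f(y)$ with $\sigma\in S(q)$ has degree at most $t-1$ in each such variable, because column $k$ has $t$ entries and column $l$ has $s\le t$. Hence $g=0$.

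That route needs the fact that alternating polynomials are divisible by the Vandermonde (over $\mathbb{Z}$, via unique factorization and torsion-freeness). Yours replaces this with an explicit pigeonhole on exponents and a sign-reversing involution, which is fully elementary. Both rest on the same count $|A|=t+1>t$, seen from the polynomial side in one case and the monomial side in the other.
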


\begin{lemma}[{\cite[Lemma~3.2]{Peel}}]\label{lemma:UValpha}
If $T=UV$ where $U$ and $V$ are subgroups of $S_n$, then 
$\alpha(U)\alpha(V)=\sharp (U\cap V) \alpha(T)$ in ${\Bbb Z}[S_n]$. 
\end{lemma}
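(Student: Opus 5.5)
The plan is to expand the product directly in ${\Bbb Z}[S_n]$ and count how often each element of $T$ occurs. Since ${\rm sgn}$ is a homomorphism, we have
\[
\alpha(U)\alpha(V)=\sum_{u\in U}\sum_{v\in V}({\rm sgn}\, u)({\rm sgn}\, v)\, uv=\sum_{(u,v)\in U\times V}{\rm sgn}(uv)\, uv .
\]
Each summand depends only on the product $uv\in T$. So the right-hand side equals $\sum_{\tau\in T} N(\tau)\,({\rm sgn}\,\tau)\,\tau$, where $N(\tau)=\sharp\{(u,v)\in U\times V\mid uv=\tau\}$. Here $T=UV$ is merely a subset of $S_n$, not necessarily a subgroup, and that is all we need. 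The identity therefore reduces to showing that $N(\tau)=\sharp(U\cap V)$ for every $\tau\in T$.

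To compute the fibre size, we fix one factorization $\tau=u_0v_0$ with $u_0\in U$ and $v_0\in V$. We then check that the map
\[
U\cap V\to\{(u,v)\in U\times V\mid uv=\tau\},\qquad w\mapsto (u_0w,\; w^{-1}v_0)
\]
is a bijection. It is well defined because $u_0w\in U$ and $w^{-1}v_0\in V$ (both $U$ and $V$ are subgroups), and $(u_0w)(w^{-1}v_0)=\tau$. It is injective, since $u_0w=u_0w'$ forces $w=w'$.

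For surjectivity, suppose $uv=\tau=u_0v_0$. Then $u_0^{-1}u=v_0v^{-1}$, and this element lies in $U$ (left side) and in $V$ (right side), hence in $U\cap V$. Setting $w=u_0^{-1}u$ gives $u=u_0w$ and $v=w^{-1}v_0$, as required.

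Combining the two steps, $\alpha(U)\alpha(V)=\sharp(U\cap V)\sum_{\tau\in T}({\rm sgn}\,\tau)\,\tau=\sharp(U\cap V)\,\alpha(T)$. There is no real obstacle here. The only point needing care is the fibre count, which must use that $U$ and $V$ are subgroups (closure under products and inverses), while $T$ itself need not be one.
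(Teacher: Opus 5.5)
Your proof is correct: expanding the product using multiplicativity of ${\rm sgn}$ and showing that each $\tau\in UV$ has exactly $\sharp(U\cap V)$ factorizations, via the bijection $w\mapsto(u_0w,\,w^{-1}v_0)$, is exactly what the identity requires. The paper gives no proof of its own and simply cites \cite[Lemma~3.2]{Peel}, whose argument is this standard fibre count.
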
 
 
\bigskip 

We define a subgroup $H(q)$ of $S(q)$ by 
\[
H(q) = \{ \sigma \in S(q) \mid \sigma f(y) = ({\rm sgn}\sigma) f(y) \}.  
\] 
Then $H(q)=S(q)\cap C(y)$. 
Let $W(q)$ be a set of left coset representatives of $H(q)$ in $S(q)$ such that 
$W(q)$ contains the identity permutation $1$. 

\begin{theorem}[{\cite[Theorem~3.4]{Peel}}]\label{th:Garnir's relation}
Let $y$ and $W(q)$ be as above. Then 
\[
f(y)=-\sum_{\nu \in W(q)\setminus\{1\}} ({\rm sgn} \nu) f(\nu y)
\]  
in ${\Bbb Z}[x_1, \ldots, x_n]$. 
\end{theorem}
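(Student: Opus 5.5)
The plan is to derive the relation directly from Theorem~\ref{th:PeelalphaSfy=0}, which says $\alpha(S(q)) f(y) = 0$, by splitting $\alpha(S(q))$ along the left cosets of $H(q)$. Since $W(q)$ is a set of left coset representatives of $H(q)$ in $S(q)$, we have the disjoint decomposition $S(q) = \bigsqcup_{\nu \in W(q)} \nu H(q)$. Because the sign is multiplicative, this gives the identity
\[
\alpha(S(q)) = \sum_{\nu \in W(q)} \sum_{\sigma \in H(q)} ({\rm sgn}\,\nu\sigma)\, \nu\sigma = \sum_{\nu \in W(q)} ({\rm sgn}\,\nu)\, \nu\, \alpha(H(q))
\]
in ${\Bbb Z}[S_n]$. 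This is elementary bookkeeping; alternatively one can phrase it via Lemma~\ref{lemma:UValpha}, but the direct coset decomposition suffices.

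Next we compute $\alpha(H(q)) f(y)$. By the definition of $H(q)$, each $\sigma \in H(q)$ satisfies $\sigma f(y) = ({\rm sgn}\,\sigma) f(y)$. Hence
\[
\alpha(H(q)) f(y) = \sum_{\sigma \in H(q)} ({\rm sgn}\,\sigma)^2 f(y) = \sharp H(q)\, f(y).
\]
We also use the compatibility of the action with tableaux: $\nu f(y) = f(\nu y)$. This holds because $\nu$ acts by permuting the variables $x_i \mapsto x_{\nu(i)}$, which sends each column difference product of $y$ to the corresponding column difference product of $\nu y$. We should check that this convention for the action matches the one used in defining $S^{\lambda}$; this is the only point requiring care.

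Combining these, we obtain
\[
0 = \alpha(S(q)) f(y) = \sharp H(q) \sum_{\nu \in W(q)} ({\rm sgn}\,\nu) f(\nu y)
\]
in ${\Bbb Z}[x_1, \ldots, x_n]$. This polynomial ring is torsion-free and $\sharp H(q) \ge 1$, so we may cancel $\sharp H(q)$. Finally, we separate the term $\nu = 1 \in W(q)$, which contributes $f(y)$, and move the remaining terms to the other side to get the stated formula. The main (mild) obstacle is the bookkeeping of the left versus right coset convention: we must ensure that the factorization $\nu\sigma$ places $\alpha(H(q))$ on the right, so that it acts on $f(y)$ first. That is exactly why \emph{left} coset representatives $\nu H(q)$ are chosen.
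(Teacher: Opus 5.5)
Your proposal is correct. Writing $\alpha(S(q))=\sum_{\nu\in W(q)}({\rm sgn}\,\nu)\,\nu\,\alpha(H(q))$ via left cosets, then using $\alpha(H(q))f(y)=\sharp H(q)\,f(y)$ and $\nu f(y)=f(\nu y)$, and cancelling $\sharp H(q)$ in the torsion-free ring ${\Bbb Z}[x_1,\ldots,x_n]$, is the standard deduction of the Garnir relation from Theorem~\ref{th:PeelalphaSfy=0}. The paper gives no proof of its own; it quotes the result from Peel, and the setup it recalls just beforehand ($\alpha(S(q))f(y)=0$ together with the subgroup $H(q)$ and the coset representatives $W(q)$) is exactly the setup your argument uses.
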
 

We call the relation in Theorem~\ref{th:Garnir's relation} 
the {\it Garnir relation}.  This relation is one of the relations established by 
Garnir in \cite[Section~11]{Garnir}, and it is used in the proof of Theorem~\ref{th:basisofSpecht} (\cite[Theorem~1.1]{Peel}).  

\begin{example}\rm 
By the Garnir relation, we have 
\begin{align*} y = 
\begin{ytableau}
1 & 3 & 5 \\
2 & 4 
\end{ytableau}
& =
\begin{ytableau}
1 & 5 & 3 \\
2 & 4 
\end{ytableau}
+ 	
\begin{ytableau}
1 & 3 & 4 \\
2 & 5 
\end{ytableau}  
\end{align*}
in ${\mathbb Z}[x_1, \ldots, x_5]$, where 
we 
omit $f$. More precisely, the columns $k=2$ and $l=3$ of $y$ coincide with $\{3, 4 \}$ and $\{ 5 \}$, respectively. For $q=1$, $S(1)$ is the subgroup of $S_5$ consisting of permutations $\{3, 4, 5\}$.  
Then $H(1)= \{ 1, (3, 4) \}$ and we can take $W(1) = \{ 1, (3, 5), (4, 5) \}$. 
By Theorem~\ref{th:Garnir's relation}, we obtain the equality above. 
\end{example}

\begin{remark}\rm
For a $\lambda$-tableau $y$, we have $\sigma f(y)= ({\rm sgn} \sigma) f(y)$ for any 
$\sigma \in C(y)$. 
If $y_2$ is obtained by changing the columns $k$ and $l$ of $y_1$, in other words,  
\begin{align*} 
y_1= {\small\begin{ytableau}
       \cdots & a_1 & \cdots  & b_1 & \cdots \\
       \cdots & a_2  & \cdots & b_2 & \cdots  \\
        \vdots & \vdots & & \vdots & \vdots \\
        \cdots & a_s & \cdots & b_s \\  
        \vdots \\
        \cdots    
\end{ytableau}} \mbox { and } 
y_2= {\small\begin{ytableau}
       \cdots & b_1 & \cdots  & a_1 & \cdots \\
       \cdots & b_2  & \cdots & a_2 & \cdots  \\
        \vdots & \vdots & & \vdots & \vdots \\
        \cdots & b_s & \cdots & a_s \\  
        \vdots \\
        \cdots    
\end{ytableau}} , 
\end{align*} 
then $f(y_1) = f(y_2)$ in ${\Bbb Z}[x_1, \ldots, x_n]$. 
By these properties, we can obtain 
\begin{align*}  
\begin{ytableau}
3 & 2 & 5 \\
4 & 1
\end{ytableau}
=
\begin{ytableau}
2 & 3 & 5 \\
1 & 4 
\end{ytableau}
= - \: 
\begin{ytableau}
1 & 3 & 5 \\
2 & 4 
\end{ytableau}  
\end{align*}
in ${\mathbb Z}[x_1, \ldots, x_5]$, where we 
omit $f$.  
\end{remark} 

\begin{remark}\rm
Let $\lambda$ be a partition of a positive integer $n$. 
For a $\lambda$-tableau $y$, set 
\[
a_{y} = \sum_{\sigma \in R(y)} \sigma, \quad b_{y} = \sum_{\sigma \in C(y)} ({\rm sgn} \sigma)  \sigma \;\in {\Bbb Z}[S_n].  
\] 
Then ${\Bbb K}[S_n] a_y b_y \cong {\Bbb K}[S_n] b_y a_y$ is an irreducible representation of 
$S_n$ over ${\Bbb K}$ ({\it cf}. \cite{Iwahori} and \cite[{pages~46--47}]{FH}). 
Note that it is isomorphic to the Specht module $V_{\lambda}$ over ${\Bbb K}$.  
\end{remark}

\section{Dense representations of the symmetric group}\label{section:denserepresentations} 
In this section, we classify the dense representations of the symmetric group. 
To classify dense representations, we prepare the following lemma.  

\begin{lemma}\label{lemma:sqrtn!isnotdense} 
Let $V_{\lambda}$ be an irreducible representation of $S_n$ over ${\Bbb K}$. 
If there exists $1 \le r  \le \dim_{{\Bbb K}} V_{\lambda}- 1$ such that 
$\dim_{{\Bbb K}} \bigwedge^r V_{\lambda} > \sqrt{n!}$, then $V_{\lambda}$ is not dense. 
\end{lemma}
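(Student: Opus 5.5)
The plan is to use the standard bound on the dimension of an irreducible representation of a finite group: if $W$ is an irreducible representation of a finite group $G$ over a field ${\Bbb K}$ of characteristic zero, then $(\dim_{\Bbb K} W)^2 \le \sharp G$. Applying this to $G=S_n$ and $W=\bigwedge^r V_{\lambda}$ gives the claim. If $V_{\lambda}$ were dense, then $\bigwedge^r V_{\lambda}$ would be irreducible for every $0<r<\dim_{\Bbb K} V_{\lambda}$, in particular for the given $r$. But the hypothesis $\dim_{\Bbb K}\bigwedge^r V_{\lambda} > \sqrt{n!}$ means $(\dim_{\Bbb K}\bigwedge^r V_{\lambda})^2 > n! = \sharp S_n$, which contradicts the bound. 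Hence $\bigwedge^r V_{\lambda}$ is reducible, and $V_{\lambda}$ is not $r$-dense, so it is not dense.

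The only point needing care is that ${\Bbb K}$ need not be algebraically closed, so the inequality $(\dim W)^2\le \sharp G$ must be justified for irreducible representations over ${\Bbb K}$ rather than over $\overline{\Bbb K}$. I will handle this as follows. By Maschke's theorem, ${\Bbb K}[G]$ is semisimple, and an irreducible $W$ appears as a direct summand of ${\Bbb K}[G]$ with multiplicity $\dim_D W$, where $D=\mathrm{End}_{{\Bbb K}[G]}(W)$. Therefore
\[
\sharp G \ge \dim_D W \cdot \dim_{\Bbb K} W = \dfrac{(\dim_{\Bbb K} W)^2}{\dim_{\Bbb K} D}.
\]
This alone does not give $(\dim_{\Bbb K} W)^2\le\sharp G$ when $D\ne{\Bbb K}$, so I need an additional argument in the non-split case.

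The cleaner route is to use that every irreducible representation of $S_n$ is defined over ${\Bbb Q}$ and is absolutely irreducible: the Specht modules $V_\lambda$ stay irreducible over any characteristic-zero field, including $\overline{\Bbb K}$. Because of this, I will extend scalars. Set $\overline{V}=V_\lambda\otimes_{\Bbb K}\overline{\Bbb K}$. If $\bigwedge^r V_\lambda$ were irreducible over ${\Bbb K}$, then $\bigwedge^r\overline{V} = (\bigwedge^r V_\lambda)\otimes\overline{\Bbb K}$ would decompose over $\overline{\Bbb K}$ into irreducibles. I must make sure this decomposition contains a constituent of dimension greater than $\sqrt{n!}$, since that would contradict $\sum (\dim)^2 = n!$ over $\overline{\Bbb K}$.

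To control this I will use that all characters of $S_n$ are rational-valued with Schur indices $1$, so every $\overline{\Bbb K}$-irreducible is already realized over ${\Bbb Q}\subseteq{\Bbb K}$. Consequently, a ${\Bbb K}$-irreducible representation stays irreducible after extension to $\overline{\Bbb K}$. Then $\bigwedge^r\overline V$ is irreducible over $\overline{\Bbb K}$ of dimension greater than $\sqrt{n!}$. This contradicts $\sum_\mu d(\mu)^2=n!$, which follows from the Wedderburn decomposition of $\overline{\Bbb K}[S_n]$.

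I expect this descent step, that ${\Bbb K}$-irreducibility implies absolute irreducibility for $S_n$, to be the main obstacle. I would justify it via the paper's statement that the $V_\mu$ exhaust the irreducibles over every field of characteristic zero. In particular the $V_\mu$ defined over ${\Bbb K}$ and over $\overline{\Bbb K}$ correspond bijectively, and each is the base change of the Specht module over ${\Bbb Z}$.
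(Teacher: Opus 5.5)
Your proof is correct and follows the paper's argument. Both bound the dimension of any irreducible representation of $S_n$ by $\sqrt{n!}$ via Wedderburn--Artin, and conclude that $\bigwedge^r V_{\lambda}$ is reducible, so $V_{\lambda}$ is not $r$-dense. Your detour through absolute irreducibility of the Specht modules (so that the ${\Bbb K}$-irreducibles are exactly the $V_{\mu}$, with $\sum_{\mu} d(\mu)^2 = n!$) makes explicit what the paper's one-line appeal to Wedderburn--Artin over a possibly non-algebraically-closed ${\Bbb K}$ uses tacitly.
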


\proof 
Note that $\dim_{{\Bbb K}} V_{\mu} \le \sqrt{n!}$ for any irreducible representation $V_{\mu}$ of $S_n$. Indeed, 
\[
 (\dim_{{\Bbb K}}V_{\mu})^2 \le 
\sum_{\rho} (\dim_{{\Bbb K}} V_{\rho})^2 
= \sharp S_n = n!  
\]
by the Wedderburn-Artin theorem.   Hence,  $\dim_{{\Bbb K}} V_{\mu} \le \sqrt{n!}$. 
If $\dim_{{\Bbb K}} \bigwedge^r V_{\lambda} > \sqrt{n!}$, then $\bigwedge^r V_{\lambda}$ is not irreducible. 
Therefore, $V_{\lambda}$ is not dense. 
\qed

\begin{theorem}\label{th:main3}
The dense representations of the symmetric group $S_n$ over a field ${\Bbb K}$ of characteristic zero are those on the following list: 
\begin{enumerate}
\item\label{item:Sndenserep1} the trivial representation $V_{(n)}$ of $S_n$ for $n\ge 1$, 
\item\label{item:Sndenserep2} the sign representation $V_{(1^n)}$ of $S_n$ for $n\ge 2$,  
\item\label{item:Sndenserep3} the standard representation $V_{(n-1, 1)}$ of $S_n$ for $n \ge 3$,  
\item\label{item:Sndenserep4} the product of the standard and sign representation $V_{(2, 1^{n-2})}$ of $S_n$ for $n \ge 4$,  
\item\label{item:Sndenserep5} the $2$-dimensional irreducible representation $V_{(2^2)}$ of $S_4$, 
\item\label{item:Sndenserep6} the $5$-dimensional irreducible representations $V_{(2^3)}$ and $V_{(3^2)}$ of $S_6$.     
\end{enumerate}
\end{theorem}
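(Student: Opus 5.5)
The proof has two halves. First I show that each representation on the list is dense. Then I show that every other $V_\lambda$ fails to be dense, using Lemma~\ref{lemma:sqrtn!isnotdense} plus a few small cases checked by hand.

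\emph{The listed representations are dense.} The one-dimensional representations $V_{(n)}$ and $V_{(1^n)}$ are trivially dense, since there is no $0<m<1$.

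For the standard representation $V=V_{(n-1,1)}$ there is a classical fact: $\bigwedge^m V\cong V_{(n-m,1^m)}$ for $0\le m\le n-1$, and these are irreducible. I would prove it by writing the permutation module as $\mathbb{K}^n=V\oplus\mathbb{K}$, so that
\[
\bigwedge^m\mathbb{K}^n\cong\bigwedge^m V\oplus\bigwedge^{m-1}V.
\]
The character inner product of $\bigwedge^m\mathbb{K}^n$ with itself is $2$; this can be computed by Mackey theory or by counting orbits on pairs of $m$-subsets with signs. Induction on $m$ then shows that each $\bigwedge^m V$ is irreducible.

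Since $\bigwedge^m(V\otimes\mathrm{sgn})$ is $\bigwedge^m V$ twisted by $\mathrm{sgn}^m$, irreducibility passes over, so $V_{(2,1^{n-2})}=V_{(n-1,1)}\otimes\mathrm{sgn}$ is dense too.

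The two-dimensional $V_{(2^2)}$ is irreducible of dimension $2\le3$, so Corollary~\ref{cor:whendimle3} applies. For $S_6$, the modules $V_{(2^3)}$ and $V_{(3^2)}$ are images of $V_{(5,1)}$ and $V_{(2,1^4)}$ under the outer automorphism of $S_6$, so Lemma~\ref{lemma:grouphomnon-thick} gives density. Alternatively, for dimension $5$ it suffices to check that $\bigwedge^2$ (of dimension $10$) is irreducible, by Proposition~\ref{prop:mthickn-mthick}; this is a character computation.

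\emph{Every other $V_\lambda$ is not dense.} Let $d=d(\lambda)\ge2$. Since $\bigwedge^m$ of the conjugate is $\bigwedge^m V_\lambda$ twisted by $\mathrm{sgn}^m$, I may assume $\lambda\ge{}^t\lambda$. The key input is the known fact that if $\lambda\notin\{(n),(n-1,1),(1^n),(2,1^{n-2})\}$ and $n\ge7$, then $d\ge n(n-3)/2$. For $n\le6$ the exceptions are exactly the extra items on the list.

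Now take $r=[d/2]$. By Lemma~\ref{lemma:binomineq},
\[
\dim\bigwedge^r V_\lambda=\binom{d}{[d/2]}\ge2^{[d/2]}.
\]
It therefore suffices to show $2^{[d/2]}>\sqrt{n!}$, i.e.\ roughly $2^{d}>n!$. From $d\ge n(n-3)/2$ this is an elementary inequality for $n$ beyond a small bound, because $n!\le n^n$ while $2^{n(n-3)/2}$ grows much faster. Lemma~\ref{lemma:sqrtn!isnotdense} then finishes these cases.

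\emph{Main obstacle.} The hard step is the remaining small $n$, where the crude bound fails. There I would go through the character tables of $S_n$ for $n\le7$ or so and apply Lemma~\ref{lemma:sqrtn!isnotdense} with the sharper quantity $\binom{d}{[d/2]}$ directly. Some cases survive even this:
\begin{itemize}
\item For $S_5$ with $d=5$ or $6$ (e.g.\ $V_{(3,2)}$, $V_{(3,1,1)}$): here $\binom{5}{2}=10<\sqrt{120}$, so the lemma does not apply. I would show directly that $\bigwedge^2$ is reducible, by computing the character of $\bigwedge^2 V$ via
\[
\chi_{\wedge^2}(g)=\frac{\chi(g)^2-\chi(g^2)}{2}
\]
and taking its norm. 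For $V_{(3,1,1)}=\bigwedge^2V_{(4,1)}$, reducibility follows from Theorem~\ref{th:exteriorsymmetric}, since not $3$-thick implies not dense.
\item For $S_6$ with $d=9,10,16$: the bound $\binom{d}{[d/2]}>\sqrt{720}\approx26.8$ already holds.
\item For the $d=5$ modules of $S_6$ other than those on the list, namely $V_{(2^3)}$ twisted appropriately and $V_{(3^2)}$: these are exactly the listed ones.
\end{itemize}
Organizing this finite check cleanly is the main labor.

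Finally, the bound $d\ge n(n-3)/2$ itself needs justification. I would derive it from the branching rule by induction on $n$, or cite it as Rasala's result.
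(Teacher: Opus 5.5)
Your plan has the same structure as the paper's proof. Density of the listed modules comes from $\bigwedge^r V_{(n-1,1)}\cong V_{(n-r,1^r)}$, twisting by the sign, Corollary~\ref{cor:whendimle3} and the outer automorphism of $S_6$. Non-density of everything else comes from Lemma~\ref{lemma:sqrtn!isnotdense}, Rasala's lower bound and Lemma~\ref{lemma:binomineq}, plus a finite check. Your Mackey/character-norm argument that $\bigwedge^m V_{(n-1,1)}$ is irreducible is fine; the paper just cites Fulton--Harris.

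There is, however, a concrete error in the input you call the key fact, and it leaves two modules unhandled.

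\begin{itemize}
\item The bound $d(\lambda)\ge n(n-3)/2$ for $\lambda\notin\{(n),(n-1,1),(2,1^{n-2}),(1^n)\}$ is Rasala's result for $n\ge 9$, and it happens to hold for $n=7$. It is false for $n=8$: $d(4,4)=d(2^4)=14<20$.
\item So $V_{(4,4)}$ and $V_{(2^4)}$ are not covered by your general step. The crude estimate cannot cover them either, since $2^{[14/2]}=128<\sqrt{8!}\approx 200.8$.
\item Your finite check stops at ``$n\le 7$ or so'', so as written these two modules fall through.
\end{itemize}

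The repair is easy: include $n=8$ in the finite check. There $\binom{14}{7}=3432>\sqrt{8!}$ settles it. Alternatively, as the paper does, use $\binom{14}{2}=91>90$, where $90$ is the largest dimension of an irreducible representation of $S_8$.

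Comparing $\dim\bigwedge^2 V$ with the largest irreducible dimension of $S_n$, rather than with $\sqrt{n!}$, also makes your character computation for $S_5$ unnecessary: $\binom{5}{2}=10>6$, the largest irreducible dimension of $S_5$.

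One smaller inaccuracy: you say Lemma~\ref{lemma:sqrtn!isnotdense} does not apply for $d=6$ in $S_5$. It does, since $\binom{6}{3}=20>\sqrt{120}$. Your alternative argument for $V_{(3,1,1)}$ via Theorem~\ref{th:exteriorsymmetric} is nonetheless valid.
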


\proof
Although the proof given here uses the results over ${\Bbb C}$, it can be 
proved in the same way over any field ${\Bbb K}$ of characteristic zero. 
First, we show that any representations in 
(\ref{item:Sndenserep1})--(\ref{item:Sndenserep6}) are dense.
Since 
$V_{(n)}$ and 
$V_{(1^n)}$ are $1$-dimensional,  they are dense. 
By \cite[Exercise~4.6]{FH},  $\bigwedge^r V_{(n-1, 1)} = V_{(n-r, 1^r)}$,   
and hence 
$V_{(n-1, 1)}$ is dense. 
Using $V_{(2, 1^{n-2})} = V_{(n-1, 1)}\otimes_{{\Bbb K}} V_{(1^n)}$,  we see that 
$V_{(2, 1^{n-2})}$ is also dense. 
The $2$-dimensional irreducible representation $V_{(2^2)}$ is dense by Corollary~\ref{cor:whendimle3}. 
For (\ref{item:Sndenserep6}), let us consider an outer automorphism $\varphi : S_6 \to S_6$ 
which is given in \cite{Lorimer}. 
Since $V_{(2^3)} = \varphi^{\ast} V_{(5,1)}$ and $V_{(3^2)} = \varphi^{\ast} V_{(2,1^4)}$, 
they are dense by Lemma~\ref{lemma:grouphomnon-thick}. 

Next, let us consider the case $n\ge 9$.  
By \cite[Result~2]{Rasala}, if $n \ge 9$, then the first four minimal 
dimensions of irreducible representations of 
$S_n$ are 
\[
\mbox{(A)} \;  1,\;\;\; \mbox{(B)} \;  n-1, \;\;\;  \mbox{(C)} \;  \frac{1}{2}n(n-3), \;\;\; \mbox{(D)} \;  \frac{1}{2}(n-1)(n-2). 
\]
The case (A) corresponds to (\ref{item:Sndenserep1}) and (\ref{item:Sndenserep2}), and the case (B) corresponds to (\ref{item:Sndenserep3}) and (\ref{item:Sndenserep4})  ({\it cf.} Remark~\ref{rem:Rasala}).  By the discussion above,  these representations  
(\ref{item:Sndenserep1})--(\ref{item:Sndenserep4}) are dense. 
We claim that any $d$-dimensional irreducible representation $V$ of $S_n\; (n \ge 9)$ is not dense 
if $d \ge \frac{1}{2}n(n-3) \ge \frac{1}{2}\cdot 9 \cdot (9-3) = 27$. 
We prove this claim separately for the cases $d$ even and $d$ odd. 

When $d = 2m$ for $m \in {\Bbb N}$, it follows from (\ref{ineq:even}) of  Lemma~\ref{lemma:binomineq} that 
\begin{eqnarray*}
\dim_{{\Bbb K}} \bigwedge^m V  & = & \binom{2m}{m} \\ 
 & \ge & 2^m \\
 & \ge & \sqrt{2^{\frac{1}{2}(n(n-3))}}.   
\end{eqnarray*}
It can be shown that $2^{\frac{1}{2}(n-3)} > n$ for $n \ge 10$ by induction. 
If $n \ge 10$, then 
\[
\sqrt{2^{\frac{1}{2}n(n-3)}} >  \sqrt{n^n}  >  \sqrt{n!}.    
\]
If $n=9$, then we can verify 
\[
\sqrt{2^{\frac{1}{2}n(n-3)}}  >  \sqrt{n!}.    
\]
by direct calculation. Hence, 
\[
\dim_{{\Bbb K}} \bigwedge^m V  > \sqrt{n!} 
\]
for $n\ge 9$, which implies that $V$ is not dense by Lemma~\ref{lemma:sqrtn!isnotdense}.  

When $d = 2m+1$ for $m \in {\Bbb N}$, it follows from (\ref{ineq:odd}) of  Lemma~\ref{lemma:binomineq} that 
\begin{eqnarray*}
\dim_{{\Bbb K}} \bigwedge^m V  & = & \binom{2m+1}{m} \\ 
 & \ge & 2^{m+1} \\
 & \ge & \sqrt{2^{\frac{1}{2}(n-1)(n-2)}},    
\end{eqnarray*}
since $m+1 = \frac{d+1}{2} \ge (\frac{n(n-3)}{2}+1)/2 =\frac{(n-1)(n-2)}{4}$ and $m=\frac{d-1}{2}\ge \frac{27-1}{2} =13$.  
It can be shown that $2^{\frac{1}{2}(n-2)} > n$ for $n \ge 9$ by induction. 
If $n \ge 9$, then 
\[
\dim_{{\Bbb K}} \bigwedge^m V \ge \sqrt{2^{\frac{1}{2}(n-1)(n-2)}} >  \sqrt{n^{n-1}}  >  \sqrt{n!}.      
\]
By Lemma~\ref{lemma:sqrtn!isnotdense},  
$V$ is not dense.    
Hence, any irreducible representation $V$ of $S_n \;  (n \ge 9)$ except the case 
(\ref{item:Sndenserep1})--(\ref{item:Sndenserep4}) is not dense. 

Finally, let us consider the case $n \le 8$. For details on irreducible representations of $S_n$ ($n\le 8$), see \cite[Appendix~I, I.A Character Tables]{JamesKerber}. 
If $n \le 4$, then any irreducible representations are dense, which are listed in 
(\ref{item:Sndenserep1})--(\ref{item:Sndenserep6}). 
Let ${\mathcal I}_n$ be the set of all (equivalence classes of) irreducible representations of $S_n$ that do not 
appear in  (\ref{item:Sndenserep1})--(\ref{item:Sndenserep6}). 
Set $\max(n) := \max_{V \in {\mathcal I}_n} \dim_{{\Bbb K}} V$ and $\min(n) := 
\min_{V \in {\mathcal I}_n} \dim_{{\Bbb K}} V$. 
Note that any irreducible representation of $S_n$ has dimension $\le \max(n)$ for $5  \le n\le 8$. 
When $n=5$, ${\mathcal I}_5 = \{ V_{(3, 2)}, V_{(3, 1^2)}, V_{(2^2, 1)} \}$, $\max(5)=6$, and $\min(5)=5$. 
Since $\binom{6}{2} > \binom{5}{2} = 10 >6=\max(5)$,  any irreducible representation in ${\mathcal I}_5$ is not dense.  
When $n=6$, 
\[
{\mathcal I}_6 = \{ V_{(2^2, 1^2)}, V_{(3, 1^3)}, V_{(3, 2, 1)}, V_{(4, 1^2)}, V_{(4, 2)} \}, 
\]   
$\max(6)=16$, and $\min(6)=9$. 
Since $\binom{9}{2} = 36 > 16 = \max(6)$,  any irreducible representation in ${\mathcal I}_6$ is not dense.  
Similarly, $\max(7)=35$, $\min(7)=14$, and $\binom{14}{2} = 91 > 35 =\max(7)$, which implies that  
any irreducible representation in ${\mathcal I}_7$ is not dense. 
We also have $\max(8) =90$, $\min(8) =14$, and 
$\binom{14}{2} = 91 > 90 = \max(8)$, which implies that  
any irreducible representation in ${\mathcal I}_8$ is not dense. 
Thus, we have verified the case $n \le 8$. 

Summarizing the discussion above, we have shown the statement.  
\qed

\begin{remark}\label{rem:Rasala}\rm
Let $n\ge 7$, and 
let $\lambda$ be a partition of $n$ different from $(n), (1^n), (n-1, 1)$, and $(2, 1^{n-2})$. 
As stated in \cite[{page~145 $(2)^{\ast}$}]{Rasala}, we have $\dim_{\Bbb K} V_{\lambda} \ge n$.  
If $n\ge 9$, then $\dim_{\Bbb K} V_{\lambda} \ge \dfrac{1}{2} n(n-3)$ by \cite[Result~2]{Rasala}. 
\end{remark}

\section{Thick representations of the symmetric group}\label{section:thickrepresentations}\label{section:thickrepresentations} 
In the following sections, we classify the thick representations of the symmetric group. 
Theorems~\ref{th:main1} and \ref{th:main2} follow from Theorem~\ref{th:main3} and 
the following theorem.  

\begin{theorem}\label{th:main4} 
If $V_{\lambda}$ is not contained in the list of Theorem~\ref{th:main3}, then 
$V_{\lambda}$ is not thick. 
\end{theorem}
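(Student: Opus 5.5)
We plan to prove Theorem~\ref{th:main4} by exhibiting, for each partition $\lambda=(m_1,\ldots,m_s)$ outside the list, two subspaces $V_1,V_2\subseteq V_\lambda$ with $\dim V_1=i$, $\dim V_2=j$, $i+j\le d(\lambda)$, such that $(\sigma V_1)\cap V_2\neq 0$ for every $\sigma\in S_n$. Then $V_\lambda$ is not $(i,j)$-thick, and Proposition~\ref{prop:ijthick} or Corollary~\ref{cor:ijthick} shows it is not thick. Since $V_{{}^t\lambda}\cong V_\lambda\otimes V_{(1^n)}$ and tensoring with a one-dimensional character preserves thickness (the same group elements work), we may assume $\lambda$ has $s\le m_1$ rows, or whatever normalization suits each case. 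Following the outline, the proof splits into $s=2$, $s=3$, and $s\ge 4$.

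The key construction is small and uses Specht polynomials. Take $V_1$ spanned by a few polynomials $f(y)$ that share a large common factor, for instance tableaux agreeing outside two or three cells. Take $V_2$ to be the span of a large subset of the standard basis $B^\lambda$ (Theorem~\ref{th:basisofSpecht}), that is, the complement of a small set $S$ of standard tableaux. Then $(\sigma V_1)\cap V_2\neq0$ is equivalent to a certain $|S|\times \dim V_1$ minor of the coefficient matrix of $\sigma V_1$ in the basis $B^\lambda$ vanishing. A convenient choice is $V_1$ of dimension $i$ and $|S|=i$ chosen as in the proof of Theorem~\ref{th:exteriorsymmetric}(\ref{item:exterior}). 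The vanishing should come from a linear dependence forced by pigeonhole. The vectors $\sigma f(y)$ for $y$ in our family are obtained by letting $\sigma$ act on few variables. When they are expanded via Garnir relations (Theorem~\ref{th:Garnir's relation}), their projections onto $\langle S\rangle$ all lie in a subspace of dimension smaller than $i$. The case $i=2$ or $3$ should suffice, and Corollary~\ref{cor:ijthick} then kills every $m$ in $[\min\{i,j\},\,n-\min\{i,j\}]$, which covers $2\le m\le d-2$. The remaining case $m=1$ is irreducibility, which is already fine, so we need an obstruction at $m=2$ (and then $d-2$). A simpler alternative for $m=2$ is a direct one: find a $G$-invariant realizable subspace $W_1\subseteq\bigwedge^2V$ whose perp is also realizable, as in Proposition~\ref{prop:condofm-thick}. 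I would try the explicit $(2,j)$ approach first.

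For $s=2$, $\lambda=(m_1,m_2)$ with $m_2\ge2$ (excluding $(2,2)$ and $(3,3)$), and for $s=3$, I would use the dimension formulas of Proposition~\ref{prop:formuladim} together with Lemma~\ref{lemma:binomineq-1} to check the needed inequality $i+j\le d(\lambda)$ with room to spare. I would then carry out the explicit Garnir computation on tableaux differing in the last columns. Small exceptional partitions such as $(3,2)$, $(3,1,1)$, $(2,2,1)$, $(4,2)$ and $(3,2,1)$ may need hand treatment or a separate choice of $V_1,V_2$. For $s\ge4$, I would reduce to the first few rows: restriction to a Young subgroup is not enough, so instead I would pick tableaux whose first columns are fixed and vary only entries in rows $1$--$3$, reusing the $s=2,3$ computations.

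The main obstacle is making the pigeonhole/minor-vanishing argument uniform in $\sigma$. A general permutation moves the variables appearing in the chosen tableaux into arbitrary positions, so the Garnir straightening of $\sigma f(y)$ is uncontrolled. I would first try to arrange $V_2$ to be defined by a \emph{divisibility} or \emph{vanishing} condition rather than by a basis complement, for example polynomials vanishing on a chosen linear subspace of $\mathbb{K}^n$ such as $x_a=x_b$. Such conditions are easy to test on $\sigma f(y)$ directly, and their codimension can be computed via branching $V_\lambda|_{S_{n-1}}$.
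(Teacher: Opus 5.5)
\textbf{There is a genuine gap: the proposal never constructs $V_1$ and $V_2$.}

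\emph{The central step is not derived.} Everything rests on the claim that, for every $\sigma\in S_n$, the coordinates of $\sigma V_1$ along the omitted standard tableaux $S$ span a space of dimension $<i$. This is asserted, not proved. As you note yourself, Garnir straightening of $\sigma f(y)$ for a general $\sigma$ is uncontrolled, so nothing makes the argument uniform in $\sigma$.

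\emph{The chosen regime is harder than the theorem.} Fixing $i\in\{2,3\}$ with $\dim V_2=d(\lambda)-i$ turns the task into showing that \emph{every} non-listed $V_\lambda$ fails to be $2$- or $3$-thick. That asks for a very rigid configuration: a tiny subspace all of whose $n!$ translates meet one fixed subspace of codimension $i$. The paper obtains such low-$m$ failures only for families with special structure:
\begin{enumerate}
\item $(n-2,2)$ and $(2,2,1^{n-4})$, via an explicit straightening with a $2$-dimensional subspace (Propositions~\ref{prop:okuyama-omoda} and~\ref{prop:221});
\item $(n-2,1,1)\cong\bigwedge^2 V_{(n-1,1)}$, via Theorem~\ref{th:exteriorsymmetric} (Proposition~\ref{prop:311and411}).
\end{enumerate}
For general $\lambda$ the paper only shows failure of $m$-thickness for $m$ in a middle range, and your sketch gives no reason the stronger claim should hold. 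A smaller point: for $(i,d-i)$, Corollary~\ref{cor:ijthick} does not apply, since $\max\{i,j\}>d/2$. Proposition~\ref{prop:ijthick} gives only $m=i$, which would suffice anyway.

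\textbf{The missing idea} is to make the \emph{moving} subspace large and determined by very little data.
\begin{itemize}
\item Take $W_1$ spanned by the $f(y)$ with $1,2$ in the first column of $y$. By the Garnir relation this span already contains the non-standard such tableaux.
\item Hence $\sigma(W_1)$ is the span of the $f(y)$ with $\sigma(1),\sigma(2)$ in the first column, and it depends only on $\{\sigma(1),\sigma(2)\}$.
\item So $\sigma(W_1)\cap W_2\neq 0$ for all $\sigma$ as soon as $W_2$ contains, for each pair $\{a,b\}$, one $f(y)$ with $a,b$ in its first column. This is a covering condition, and it is uniform in $\sigma$ for free.
\end{itemize}

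The paper then chooses $W_2$ case by case:
\begin{itemize}
\item For $\lambda=(m_1,m_2)$ with $m_2\ge 3$: $W_2$ is $W_1$ plus $\lfloor (n-1)/2\rfloor$ extra tableaux handling the pairs through $1$ or $2$. Pairs inside $\{3,\dots,n\}$ come from $W_1$ by swapping the first two columns.
\item For three rows with $m_3\le 2$: $W_2$ is spanned by one tableau per triple in a family of $F(n)\le (n^2-4)/4$ triples covering all pairs.
\item For three rows with $m_3\ge 3$: the first column of $W_1$ is fixed to be $\{1,2,3\}$, and $W_2$ adds one tableau for each other triple meeting $\{1,2,3\}$.
\item For $s\ge 4$: replace $\lambda$ by ${}^t\lambda$ if needed so that $\sharp S^{\rm st}_{\lambda/(1,1)}\le \sharp S^{\rm st}_{\lambda/(2)}$, and take $W_2=W_1$. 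This works because a first column of length $\ge 4$ can contain $\{1,2,\sigma(1),\sigma(2)\}$.
\end{itemize}
What remains is the hook-length inequality $\dim W_1+\dim W_2\le d(\lambda)$. The shapes $(n-2,2)$, $(n-2,1,1)$, $(2,2,1)$ and $(3,3,3)$ are treated separately.

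Your closing suggestion actually points at this object. The $f(y)$ with $a,b$ in the first column vanish on $x_a=x_b$ and span $M_{a,b}$, with $\sigma M_{a,b}=M_{\sigma(a),\sigma(b)}$ and $\dim M_{a,b}=\sharp S^{\rm st}_{\lambda/(1,1)}$. But it must play the role of the large moving subspace that a fixed $W_2$ meets in every translate, not that of a fixed $V_2$ of codimension $2$ or $3$.
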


For proving Theorem~\ref{th:main4}, we only need to show that $V_{\lambda}$ is not 
$(i, j)$-thick with $i+j \le \dim_{{\Bbb K}} V_{\lambda}$ if $V_{\lambda}$ is not contained in the list of Theorem~\ref{th:main3} by Proposition~\ref{prop:ijthick}.  
From now on, let us prove Proposition~\ref{prop:main5}, which is equivalent to 
Theorem~\ref{th:main4}. 

\begin{proposition}\label{prop:main5} 
If $V_{\lambda}$ is not contained in the list of Theorem~\ref{th:main3}, then 
there exist subspaces $W_1$ and $W_2$ of $V_{\lambda}$ such that 
\begin{enumerate}
\item $\sigma(W_1)\cap W_2 \neq 0$ for any $\sigma \in S_n$, 
\item $\dim_{{\Bbb K}} W_1+\dim_{{\Bbb K}} W_2 \le \dim_{{\Bbb K}} V_{\lambda}$. 
\end{enumerate} 
In particular, $V_{\lambda}$ is not $(\dim_{{\Bbb K}} W_1, \dim_{{\Bbb K}} W_2)$-thick.  
\end{proposition}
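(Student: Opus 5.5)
We plan to construct $W_1$ and $W_2$ explicitly inside the Specht module $V_\lambda$, using the basis $B^\lambda$ of Theorem~\ref{th:basisofSpecht}. First we reduce the number of cases. Tensoring with the sign representation gives $V_{{}^t\lambda}\cong V_\lambda\otimes V_{(1^n)}$, and this tensoring preserves the subspace lattice and the $S_n$-orbits of subspaces. So non-$(i,j)$-thickness of $V_\lambda$ is equivalent to that of $V_{{}^t\lambda}$, and we may assume the number of rows $s$ of $\lambda=(m_1,\dots,m_s)$ satisfies $s\le m_1$ whenever convenient. We then split into three cases, $s=2$, $s=3$ and $s\ge 4$, as announced in the introduction. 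In each case we aim for a small subspace $W_1$, typically of dimension $1$ or $2$, and a large $W_2$ of codimension $\ge \dim W_1$, with $\sigma(W_1)\cap W_2\neq 0$ for all $\sigma$.

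The basic construction is the following. Take $W_1=\langle f(y)\rangle$ for a single Specht polynomial. A hyperplane $W_2$ meets $\sigma W_1$ iff $\sigma f(y)\in W_2$, so with $\dim W_1=1$ this requires $W_2$ to contain the whole orbit, which is impossible by irreducibility. Hence we must use $\dim W_1=i\ge 2$ together with a subspace $W_2$ of codimension $\ge i$. The key device is \emph{evaluation}: a Specht polynomial $f(\sigma y)$ vanishes under the specialization $x_a=x_b$ whenever $a,b$ lie in the same column of $\sigma y$. We choose $W_1=\langle f(y_1),\dots,f(y_i)\rangle$ for tableaux $y_1,\dots,y_i$ that differ only in the placement of a few entries among the first columns. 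We choose $W_2$ as the common kernel of a family of linear functionals on $V_\lambda$ given by specializations $x_{a}=x_{b}$ (or more general evaluations), with the number of independent functionals at most $\dim V_\lambda-\dim W_2$ chosen so that condition (2) holds. By pigeonhole, for every $\sigma$ some nonzero combination of the $\sigma f(y_k)$ is killed by all the functionals. In the spirit of the proof of Theorem~\ref{th:exteriorsymmetric}, we will show that the restrictions of $\sigma f(y_1),\dots,\sigma f(y_i)$ to the quotient $V_\lambda/W_2$ span a space of dimension $<i$ for every $\sigma$. This happens because they are all forced to be combinations of fewer polynomials, via the Garnir relation Theorem~\ref{th:Garnir's relation} and Theorem~\ref{th:PeelalphaSfy=0}.

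Concretely, for $s=2$, $\lambda=(m_1,m_2)$ with $m_2\ge2$, we take $i=2$ or $3$. Here $W_1$ is spanned by Specht polynomials sharing all columns but one or two. The quotient $V_\lambda/W_2$ is chosen to be spanned by the images of the standard tableaux whose last entries occupy a fixed small configuration. We then check $\dim W_1+\dim W_2\le d(m_1,m_2)$ using the formula \eqref{eq:dm1m2} and the monotonicity Lemma~\ref{lemma:binomineq-1}. The small exceptional shapes $(2,2)$ and $(3,3)$, which are thick, must be excluded, and these produce the boundary of the inequalities. The case $s=3$ is handled similarly using \eqref{eq:dm1m2m3}. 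For $s\ge4$ we restrict attention to the first columns and reuse the construction for a smaller shape: one Specht polynomial factors as a column Vandermonde times a polynomial in the remaining variables. After a conjugation, Remark~\ref{rem:thickdiagram} and Corollary~\ref{cor:ijthick} then transfer non-$(i,j)$-thickness.

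The main obstacle will be the dimension count in condition (2) uniformly in $\lambda$, together with verifying rigorously that the $\sigma$-images really collapse in the quotient for every $\sigma\in S_n$, and not merely for $\sigma$ in a Young subgroup. We would first try to choose $W_2$ as an $S_{n-2}$- or $S_{n-1}$-stable subspace, using the branching rule $V_\lambda|_{S_{n-1}}=\bigoplus V_{\lambda^-}$. With this choice the condition for arbitrary $\sigma$ reduces to a bounded list of coset representatives of $S_n/S_{n-2}$, which can be checked by the Garnir relation. The remaining small $n$ (roughly $n\le 8$) that escape the uniform inequalities will be checked by hand.
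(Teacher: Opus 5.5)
There is a genuine gap. The proposal never fixes $W_1$ and $W_2$ for any shape, and the one step that carries all the content is only asserted.

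Your ``pigeonhole'' step does not work. Condition (2) forces $\mathrm{codim}\,W_2\ge \dim W_1=i$, so the $\ge i$ functionals cutting out $W_2$ can be injective on the $i$-dimensional space $\sigma W_1$, and no counting produces a kernel. What you actually need is that the images of $\sigma f(y_1),\dots,\sigma f(y_i)$ in $V_\lambda/W_2$ are dependent for \emph{every} $\sigma\in S_n$. Neither the vanishing of $f(\sigma y)$ on $x_a=x_b$ nor the Garnir relation (Theorem~\ref{th:Garnir's relation}) gives this without a specific construction. Even for the single family $(n-2,2)$ with $i=2$, the paper needs a delicate case-by-case straightening (Proposition~\ref{prop:okuyama-omoda}).

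Moreover, insisting on $\dim W_1\in\{2,3\}$ means you would be proving that every non-dense $V_\lambda$ fails to be $2$- or $3$-thick. That is much stronger than the statement. The paper establishes it only for $(n-2,2)$, for $(n-2,1,1)=\bigwedge^2 V_{(n-1,1)}$ via Theorem~\ref{th:exteriorsymmetric}, and for their conjugates; the general thick diagram is left open.

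Your $s\ge 4$ step also fails. A construction for the smaller shape, transported through $f(y)=\Delta(\text{first column})\cdot f(y')$, only controls $\sigma$ in the Young subgroup preserving the first column. Non-thickness does not ascend from subgroups: Lemma~\ref{lemma:grouphomnon-thick} transfers it only from $G$ to $G'$. For instance, $V_{(n-1,1)}$ is thick, while its restriction to $S_{n-1}$ is reducible.

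The missing idea is to make $W_1$ \emph{large} and stable under a big Young subgroup, so that the ``for all $\sigma$'' condition collapses to a finite covering problem. The paper's choices are:
\begin{itemize}
\item for $s=2$: $W_1=M_{1,2}$, the span of all tableaux with first column $\{1,2\}$;
\item for $s=3$: the span of tableaux with first column $\{1,2,\ast\}$, or exactly $\{1,2,3\}$ when $m_3\ge 3$;
\item for $s\ge4$: the span of tableaux with $1,2$ somewhere in the first column.
\end{itemize}
Then $\sigma W_1$ depends only on $\{\sigma(1),\sigma(2)\}$ (resp.\ on the triple). So $W_2$ only has to contain, for each such pair, one tableau with that pair in a single column.

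$M_{1,2}$ already handles pairs disjoint from $\{1,2\}$, by swapping the first two columns. Hence for $s=2$ it suffices to add $[\frac{n-1}{2}]$ tableaux, using one three-term identity to cover the pairs meeting $\{1,2\}$. For $s=3$ one uses a covering of pairs by $F(n)\le\frac{n^2-4}{4}$ triples, or, when $m_3\ge3$, the $\frac{3(n-2)(n-3)}{2}$ extra tableaux $y_{a,i,j}$. For $s\ge4$ one simply takes $W_2=W_1$, and uses $V_\lambda=W_1\oplus W_1'$ together with conjugation to get $2\dim W_1\le\dim V_\lambda$.

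Condition (2) then becomes a hook-length inequality such as $2d(m_1-1,m_2-1)+[\frac{n-1}{2}]\le d(m_1,m_2)$. The shapes $(n-2,2)$, $(n-2,1,1)$, $(2,2,1)$ and $(3,3,3)$ are treated separately.

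Your instinct to use an $S_{n-2}$-stable subspace to reduce to cosets is the right one. But it has to be applied to $W_1$, and that is exactly what forces $\dim W_1$ to be large rather than $2$ or $3$.
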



Suppose that $V_{\lambda}$ is not contained in the list of Theorem~\ref{th:main3} 
for a partition $\lambda=(m_1, m_2, \ldots, m_s)$ of $n$. 
Then $s\ge 2$.   
To prove Proposition~\ref{prop:main5},  
we divide the argument into three cases: (Case 1) $s=2$, (Case 2) $s=3$, and (Case 3) $s\ge 4$. 
More precisely, Case~1 corresponds to partitions $\lambda=(m_1, m_2)$ with 
$m_1\ge m_2\ge2$, excluding $\lambda=(2, 2)$ and $(3, 3)$. 
Case~2 corresponds to partitions $\lambda=(m_1, m_2, m_3)$ with 
$m_1\ge m_2\ge m_3 \ge 2$, excluding $\lambda=(2, 2, 2)$, or 
$\lambda=(m_1, m_2, 1)$ with $m_1 \ge m_2 \ge 1$, excluding 
$\lambda=(1, 1, 1), (2, 1, 1)$.  
Case~3 corresponds to partitions $\lambda=(m_1, m_2, \ldots, m_s)$ with $s\ge 4$ and  
$m_1\ge m_2\ge \cdots \ge m_s \ge 1$, excluding 
$\lambda=(1^n)$ and $(2, 1^{n-2})$.  
They 
will be proved in 
Sections~\ref{section:s=2}, \ref{section:s=3}, and \ref{section:sge4}, respectively. 

\begin{remark}\rm
In fact, we can prove Theorems~\ref{th:main1} and \ref{th:main2} without using \cite[Result~2]{Rasala}.  Theorems~\ref{th:main1} and \ref{th:main2} follow from 
Theorem~\ref{th:main4} and the fact that if $V_{\lambda}$ is contained in the list of Theorem~\ref{th:main3}, then it is dense.  
\end{remark} 

\bigskip 

Let us recall the notation from Section~\ref{section:Specht}.  
In the sequel, for a $\lambda$-tableau $y$, 
we simply denote  the polynomial $f(y) \in {\Bbb Z}[x_1, \ldots, x_n]$ by $y$ 
whenever no confusion arises.   
For a subset $T$ of $Y_{\lambda}$, we define $\langle T \rangle$ 
to be the subspace of $V_{\lambda}$ spanned by $\{ y \in T\}$.  

The rest of this section is devoted to presenting 
several non-thick representations of the symmetric group, which will be used in 
Sections~\ref{section:s=2} and \ref{section:s=3}.  

\begin{proposition}\label{prop:okuyama-omoda}
For $n\ge 5$, $V_{(n-2, 2)}$ is not $2$-thick. In particular, it is not thick.  
\end{proposition}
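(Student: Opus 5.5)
We show directly from Definition~\ref{def:thickanddense} that $V_{(n-2,2)}$ is not $2$-thick. We exhibit a $2$-dimensional subspace $W_1$ and a subspace $W_2$ of codimension $2$ such that $\sigma(W_1)\cap W_2\neq 0$ for every $\sigma\in S_n$. We work with the Peel model: $V_{(n-2,2)}$ is spanned by the Specht polynomials $(x_a-x_c)(x_b-x_d)$, where $a,b,c,d$ are distinct. Its dimension is $d=n(n-3)/2\ge 5$ for $n\ge 5$.

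\emph{Choice of $W_1$.} Let $W_1=U_{\{1,2,3,4\}}$, where for a $4$-element set $A=\{a,b,c,d\}$ we let $U_A$ be the span of the Specht polynomials involving only the variables indexed by $A$. By the Garnir relation (Theorem~\ref{th:Garnir's relation}), $(x_a-x_b)(x_c-x_d)$, $(x_a-x_c)(x_b-x_d)$ and $(x_a-x_d)(x_b-x_c)$ satisfy a linear relation, so $\dim U_A= 2$. We expect it to be exactly $2$: it is a copy of $V_{(2,2)}$ for the symmetric group on $A$. For any $\sigma$ we have $\sigma(W_1)=U_{\sigma(\{1,2,3,4\})}$, so it suffices to find $W_2$ of codimension $2$ meeting every $U_A$ nontrivially.

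\emph{Choice of $W_2$.} Use evaluation functionals. Set $p=e_1+e_2$ and $q=e_3+e_4$ in $\mathbb{K}^n$, and let $\varphi_1(f)=f(p)$ and $\varphi_2(f)=f(q)$ on $V_{(n-2,2)}$. Put $W_2=\ker\varphi_1\cap\ker\varphi_2$. We verify three facts.
\begin{enumerate}
\item $\varphi_1,\varphi_2$ are linearly independent on $V_{(n-2,2)}$, so $\dim W_2=d-2$. Take $f=(x_1-x_3)(x_2-x_5)$, which uses $n\ge 5$. Then $f(p)=1$, while $f(q)=(0-1)(0-0)=0$. Also $g=(x_3-x_1)(x_4-x_5)$ has $g(p)=0$ and $g(q)=1$.
\item If $\{1,2\}\not\subseteq A$, then the coordinates of $p$ indexed by $A$ contain three equal values. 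Hence every $(x_a-x_c)(x_b-x_d)$ with $\{a,b,c,d\}=A$ has a vanishing factor at $p$, so $\varphi_1|_{U_A}=0$. The analogous statement holds for $\varphi_2$ when $\{3,4\}\not\subseteq A$. Thus for $A\neq\{1,2,3,4\}$, at least one of the two functionals vanishes on $U_A$.
\item For $A=\{1,2,3,4\}$, note that $q=\mathbf{1}_A-p$ on the coordinates in $A$. Every $f\in U_A$ is invariant under translation by the all-ones vector and is homogeneous of degree $2$, so $f(q)=f(-p)=f(p)$. Hence $\varphi_1=\varphi_2$ on $U_A$.
\end{enumerate}
In all cases the map $(\varphi_1,\varphi_2)\colon U_A\to\mathbb{K}^2$ has rank at most $1$. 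Since $\dim U_A=2$, its kernel $U_A\cap W_2$ is nonzero.

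Therefore $\sigma(W_1)\cap W_2\neq0$ for all $\sigma$, with $\dim W_1+\dim W_2=d$. So $V_{(n-2,2)}$ is not $2$-thick, and by Definition~\ref{def:thickanddense} it is not thick.

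The main obstacle is the bookkeeping in the last two facts. One must check that the three-equal-coordinates argument really kills every spanning polynomial of $U_A$, and that the translation/degree argument applies to all of $U_A$. There is also a subtlety in fact (1): one must confirm that the test polynomials $f$ and $g$ lie in $V_{(n-2,2)}$. They do, since all Specht polynomials of $(n-2,2)$-tableaux lie in $S^{\lambda}$. Finally, the hypothesis $n\ge5$ is used only for this independence. For $n=4$ one has $\varphi_1=\varphi_2$, which is consistent with $V_{(2,2)}$ being thick.
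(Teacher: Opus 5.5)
Your proof is correct, and it takes a genuinely different route from the paper's. Both arguments use the same $2$-dimensional subspace: the paper's $\langle f_{24},f_{34}\rangle$ is your $U_{\{1,2,3,4\}}$. The difference is in the codimension-$2$ subspace.

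\begin{itemize}
\item The paper takes the span of all standard tableaux except $f_{25}$ and $f_{34}$. It then checks, by a case analysis on $\{a,b,c,d\}$ with repeated column swaps and Garnir-type rewritings, that some Specht polynomial on each four-element set lies in that span.
\item You cut the subspace out with the evaluations at $e_1+e_2$ and $e_3+e_4$. The verification then reduces to two observations.
\begin{itemize}
\item A point with three equal coordinates on $A$ kills all of $U_A$.
\item On $U_{\{1,2,3,4\}}$, the symmetry $p\mapsto \mathbf{1}-p$, together with translation invariance and homogeneity of degree $2$, forces $\varphi_1=\varphi_2$.
\end{itemize}
\end{itemize}

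The two subspaces are related. In the paper's standard basis $\{f_{ij}\}$, your $\varphi_2$ is exactly the coordinate of $f_{34}$. Your $\varphi_1$ is the sum of the coordinates of the $f_{ij}$ with $i\ge 3$. So your subspace shares one deleted direction with the paper's but differs in the other.

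What each approach buys:
\begin{itemize}
\item Yours is shorter and explains why the construction works, with no bookkeeping.
\item The paper's stays inside tableau combinatorics, in the same style as its later constructions. It also names an explicit tableau in the subspace for each four-element set.
\end{itemize}

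One point needs tightening: you need $\dim U_A\ge 2$. The three-term relation only gives $\dim U_A\le 2$. Equality is needed twice: so that your $W_1$ really is $2$-dimensional, and so that the final step (rank at most $1$ on a $2$-dimensional space gives a nonzero kernel) goes through. It suffices to note that $(x_a-x_b)(x_c-x_d)$ and $(x_a-x_c)(x_b-x_d)$ are not proportional. Evaluating at $e_a+e_b$ gives $0$ and $1$, and evaluating at $e_a+e_c$ gives $1$ and $0$.
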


\proof
For integers $i, j$ with $2 \le i < j \le n$ and $(i, j) \neq (2, 3)$, we set 
\[ 
f_{ij} = \begin{ytableau}
	1 & a_{ij} & \cdots \\
	i & j
\end{ytableau} \in Y^{\rm st}_{(n-2, 2)},    
\]
where $a_{ij}$ is uniquely determined so that $f_{ij}$ is standard. 
Let \(W_1\) be the vector subspace of \(V_{(n-2,2)}\) spanned by all the \(f(y)\) with
\(y\) a standard tableau other than $f_{25}$ and $f_{34}$. 
%
We show that for any \(\sigma \in S_n,\)
\(W_1\) contains one of \(\sigma f_{24}, \sigma f_{34}\) or \(\sigma f_{34} - \sigma f_{24}.\) 
If \(W_2\) denotes the vector subspace of \(V_{(n-2,2)}\) spanned by \(f_{24}\) and \(f_{34},\) this means that
\(W_1 \cap \sigma W_2 \neq 0\) for all \(\sigma \in S_n,\) thus \(V_{(n-2,2)}\) is not $2$-thick.
Since \(f_{34} - f_{24} = \begin{ytableau}
		1 & 2 & \cdots\\
		4 & 3
	\end{ytableau}_,\) we readily see that it suffices to show the following claim.
\medskip  

\underline{Claim}. 
	Let \(a, b, c, d\) be distinct integers with \(1\le a,b,c,d \le n.\) Then for some permutation \((x,y,z,w)\) of 
	\((a,b,c,d),\) \(\begin{ytableau}
		x & z & \cdots\\
		y & w
	\end{ytableau} \in W_1.\)

To prove the claim, we may assume without loss of generality that \(a<b<c<d.\)

\bigskip 

\noindent 
\paragraph{\underline{Case: \((a,b) = (1,2)\)}}\leavevmode\par  

\smallskip 

\paragraph{Subcase: $c=3$}\leavevmode\par 

\smallskip 

	Since 
	\(\begin{ytableau}
			1 & 3 & \cdots\\
			2 & d
		\end{ytableau} = f_{2d} \in W_1\) if \(d \neq 5,\) only the case
	\((a,b,c,d) = (1,2,3,5)\) matters. But 
	\(\begin{ytableau}
		1 & 2 & \cdots\\
		3 & 5
	\end{ytableau} 
	= f_{35} \in W_1.\)

\medskip 

\noindent 
\paragraph{Subcase: \(c = 4\)}\leavevmode\par  

\smallskip 

	If \(d>5,\) 
	\(
		\begin{ytableau}
			1 & 4 & \dots\\
			2 & d
		\end{ytableau} 
		= f_{2d} - f_{24} \in W_1,
	\)
	while if \(d = 5,\)
	we alternatively have
	\(\begin{ytableau}
			1 & 2 & \dots\\
			4 & 5
	\end{ytableau}
	=f_{45} \in W_1\) 

\medskip 

\noindent 
	\paragraph{Subcase: \(c \ge 5\)}\leavevmode\par  
\smallskip 

	We have \(\begin{ytableau}
			1 & 2 & \dots\\
			c & d
	\end{ytableau} 
	=f_{cd} \in W_1.\) 

\smallskip 

\noindent 
\paragraph{\underline{Case: \((a,b) = (1,3)\)}}\leavevmode\par  
\smallskip

	Since 
	\(
		\begin{ytableau}
			1 & c & \cdots\\
			3 & d
		\end{ytableau}
		= f_{3d} - f_{3c},
	\)
	we have 
	\(
		\begin{ytableau}
			1 & c & \cdots\\
			3 & d
		\end{ytableau}\in W_1,\) if \(c\neq 4.
	\)
	If \(c = 4,\)
	we alternatively have
	\begin{align*}
		\begin{ytableau}
			1 & 4 & \cdots\\
			d & 3
		\end{ytableau}
		&=
		-\begin{ytableau}
			1 & 3 & \cdots\\
			4 & d
		\end{ytableau}
		+ 	
		\begin{ytableau}
			1 & 4 & \cdots\\
			3 & d
		\end{ytableau}\\
		&=
		-\begin{ytableau}
			1 & 2 & \cdots\\
			4 & d
		\end{ytableau}
		+\begin{ytableau}
			1 & 2 & \cdots\\
			4 & 3
		\end{ytableau}
		+ 	
		\begin{ytableau}
			1 & 2 & \cdots\\
			3 & d
		\end{ytableau}
		-
		\begin{ytableau}
			1 & 2 & \cdots\\
			3 & 4
		\end{ytableau}\\
		&=
		 -f_{4d} - f_{24} + f_{3d} \in W_1.
	\end{align*}

\noindent 
\paragraph{\underline{Case: \(a = 1, b\ge 4\)}}\leavevmode\par 
\smallskip 

	We have
	\(
		\begin{ytableau}
			1 & c & \cdots\\
			b & d
		\end{ytableau}
		= f_{bd} - f_{bc} \in W_1.
	\)

\bigskip 

\noindent 
\paragraph{\underline{Case: \(a = 2\)}}\leavevmode\par 
\smallskip 

	In this case, 
	instead of calculating
		\(\begin{ytableau}
			2 & c & \cdots\\
			b & d
		\end{ytableau},\)
	we have
		\begin{align*}
		\begin{ytableau}
			2 & c & \cdots\\
			d & b
		\end{ytableau}
		&=	
		\begin{ytableau}
			2 & 1 & \cdots\\
			d & b
		\end{ytableau}
		-
		\begin{ytableau}
			2 & 1 & \cdots\\
			d & c
		\end{ytableau}
		=
		\begin{ytableau}
			1 & 2 & \cdots\\
			b & d
		\end{ytableau}
		-
		\begin{ytableau}
			1 & 2 & \cdots\\
			c & d
		\end{ytableau} = f_{bd} - f_{cd} \in W_1.
		\end{align*}

\bigskip 

\noindent 
\paragraph{\underline{Case: \(a \ge 3\)}}\leavevmode\par 
\smallskip 

We have 
	\begin{align*}
				\begin{ytableau}
					a & c & \cdots\\
					b & d
				\end{ytableau}
				&= 
				\begin{ytableau}
					a & 1 & \cdots\\
					b & d
				\end{ytableau}
				-
				\begin{ytableau}
					a & 1 & \cdots\\
					b & c
				\end{ytableau}
				=
				\begin{ytableau}
					1 & a & \cdots\\
					d & b
				\end{ytableau}
				-
				\begin{ytableau}
					1 & a & \cdots\\
					c & b
				\end{ytableau}\\
				&=
				-\begin{ytableau}
					1 & b & \cdots\\
					a & d
				\end{ytableau}
				+
				\begin{ytableau}
					1 & a & \cdots\\
					b & d
				\end{ytableau}
				+
				\begin{ytableau}
					1 & b & \cdots\\
					a & c
				\end{ytableau}
				-
				\begin{ytableau}
					1 & a & \cdots\\
					b & c
				\end{ytableau}\\
				&=
				-\begin{ytableau}
					1 & 2 & \cdots\\
					a & d
				\end{ytableau}
				+\begin{ytableau}
					1 & 2 & \cdots\\
					a & b
				\end{ytableau}
				+
				\begin{ytableau}
					1 & 2 & \cdots\\
					b & d
				\end{ytableau}
				-
				\begin{ytableau}
					1 & 2 & \cdots\\
					b & a
				\end{ytableau}\\
				&+
				\begin{ytableau}
					1 & 2 & \cdots\\
					a & c
				\end{ytableau}
				-
				\begin{ytableau}
					1 & 2 & \cdots\\
					a & b
				\end{ytableau}
				-
				\begin{ytableau}
					1 & 2 & \cdots\\
					b & c
				\end{ytableau}
				+
				\begin{ytableau}
					1 & 2 & \cdots\\
					b & a
				\end{ytableau}\\
				&= -f_{ad} + f_{bd} + f_{ac} - f_{bc} \in W_1.
		\end{align*}
Thus, we have proved the claim, which implies that $V_{(n-2, 2)}$ is not $2$-thick. 
\qed


\begin{proposition}\label{prop:311and411}
For $n\ge 5$, $V_{(n-2, 1, 1)}$ is not $3$-thick. In particular, it is not thick. 
\end{proposition}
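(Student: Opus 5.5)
The plan is to avoid Specht polynomial computations altogether and deduce the statement from Theorem~\ref{th:exteriorsymmetric}~(\ref{item:exterior}), using the identification of $V_{(n-2,1,1)}$ as an exterior power of the standard representation. As recalled in the proof of Theorem~\ref{th:main3} (via \cite[Exercise~4.6]{FH}), we have $\bigwedge^r V_{(n-1,1)} \cong V_{(n-r,1^r)}$. With $r=2$ this gives an isomorphism of $S_n$-representations
\[
V_{(n-2,1,1)} \cong \bigwedge^2 V_{(n-1,1)}.
\]
Both thickness and $m$-thickness are invariant under isomorphism of representations. So it suffices to show that $\bigwedge^2 V$ is not $3$-thick, where $V = V_{(n-1,1)}$.

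Next I check the hypotheses of Theorem~\ref{th:exteriorsymmetric}~(\ref{item:exterior}) with $V=V_{(n-1,1)}$, $\dim_{\Bbb K} V = n-1$ and $m=2$. That theorem needs $\dim_{\Bbb K} V \ge 4$ and $2 \le m \le \dim_{\Bbb K} V - 2$. Since $n \ge 5$, we get $n-1 \ge 4$ and $2 \le n-3$, so both conditions hold. The theorem's proof is purely linear-algebraic and works for any group acting on $V$. It therefore applies to $G=S_n$ acting through the standard representation, and shows that $\bigwedge^2 V_{(n-1,1)}$ is not $3$-thick.

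Finally, I confirm that ``$3$-thick'' is meaningful here, i.e.\ that $0<3<\dim_{\Bbb K} V_{(n-2,1,1)}$. This dimension is $\binom{n-1}{2} \ge 6$ for $n\ge5$. Since thickness requires $m$-thickness for every $0<m<\dim$, failure of $3$-thickness implies that $V_{(n-2,1,1)}$ is not thick.

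I do not expect a serious obstacle. The only step needing care is the identification $\bigwedge^2 V_{(n-1,1)}\cong V_{(n-2,1,1)}$ over an arbitrary field ${\Bbb K}$ of characteristic zero rather than over ${\Bbb C}$. I would justify it in one of two ways. The first is to note that characters are defined over ${\Bbb Q}$, and representations of $S_n$ in characteristic zero are determined by their characters. The second is to argue directly: $\bigwedge^2$ of the permutation representation ${\Bbb K}^n = V_{(n)}\oplus V_{(n-1,1)}$ equals $V_{(n-1,1)}\oplus \bigwedge^2 V_{(n-1,1)}$, and comparing characters with the induced representation yields the claim.
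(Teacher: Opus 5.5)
Your proposal is correct and is the paper's own proof: identify $V_{(n-2,1,1)}\cong\bigwedge^2 V_{(n-1,1)}$ via \cite[Exercise~4.6]{FH}, then apply Theorem~\ref{th:exteriorsymmetric}~(\ref{item:exterior}) with $\dim_{\Bbb K} V_{(n-1,1)}=n-1\ge 4$ and $m=2$. Your remarks on the range condition $2\le n-3$ and on the validity of the isomorphism over an arbitrary field of characteristic zero are sound; the paper does not spell these out.
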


\proof 
By \cite[Exercise~4.6]{FH}, $\bigwedge^2 V_{(n-1, 1)} \cong V_{(n-2, 1, 1)}$. 
Since $\dim_{{\Bbb K}} V_{(n-1, 1)} =n-1 \ge 4$ for $n\ge 5$,  
$V_{(n-2, 1, 1)}$ is not $3$-thick by Theorem~\ref{th:exteriorsymmetric}.   
\qed 

\begin{prop}\label{prop:dualrep}
For a partition  $\lambda= (\lambda_1, \lambda_2, \ldots, \lambda_s)$ of $n$, 
let ${}^t \lambda$ denote the conjugate partition to $\lambda$.   
As representations of $S_n$, 
$V_{\lambda}$ is $m$-thick (or thick) if and only if so is $V_{{}^t\!\lambda}$.   
\end{prop}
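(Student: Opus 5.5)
The plan is to use the standard fact that $V_{{}^t\lambda} \cong V_{\lambda}\otimes_{{\Bbb K}} V_{(1^n)}$, the tensor product with the sign representation, and then check that $m$-thickness is invariant under twisting by a one-dimensional character. Both steps are elementary; the only real input is the identification of the conjugate Specht module.

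First, record the isomorphism $V_{{}^t\lambda}\cong V_{\lambda}\otimes V_{(1^n)}$ over a field of characteristic zero. This is classical (for instance \cite[Exercise~4.4]{FH}, or via characters: $\chi_{{}^t\lambda}(\sigma)={\rm sgn}(\sigma)\chi_\lambda(\sigma)$). One can also see it from the remark on Young symmetrizers. Transposing a tableau $y$ interchanges $R(y)$ and $C(y)$. The automorphism $\sigma\mapsto {\rm sgn}(\sigma)\sigma$ of ${\Bbb K}[S_n]$ then sends $a_y b_y$ to $\pm\, b_{{}^ty} a_{{}^ty}$. Since ${\Bbb K}[S_n]b_{{}^ty}a_{{}^ty}\cong V_{{}^t\lambda}$, this shows that the twist of $V_\lambda$ by ${\rm sgn}$ is $V_{{}^t\lambda}$. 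I expect this identification to be the only step needing care, and I would simply cite it.

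Second, let $\rho:G\to{\rm GL}(V)$ be any representation and $\chi:G\to k^{\times}$ a character, and put $\rho'=\rho\otimes\chi$. On the same underlying space, $\rho'(g)=\chi(g)\rho(g)$. For any subspace $V_1$ we have $\rho'(g)V_1=\rho(g)V_1$, because scalars do not change subspaces. Hence the condition $(\rho(g)V_1)\cap V_2=0$ for some $g$ is literally the same for $\rho$ and $\rho'$. It follows that $\rho$ is $m$-thick if and only if $\rho'$ is, for every $m$, and in particular thick if and only if $\rho'$ is thick. The same argument gives the equivalence for $(i,j)$-thickness.

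Combining the two steps with $\chi={\rm sgn}$ gives the proposition. Transferring along the isomorphism $V_{{}^t\lambda}\cong V_\lambda\otimes{\rm sgn}$ is harmless, since thickness is clearly invariant under isomorphism of representations.
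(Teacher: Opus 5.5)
Your proposal is correct and matches the paper's proof. The paper also uses the isomorphism $V_{{}^t\lambda}\cong V_{\lambda}\otimes_{\Bbb K} V_{(1^n)}$ (cited from Fulton--Harris) and then says the claim follows from the definition of thickness, which is exactly your observation that twisting by a scalar character leaves every subspace $\rho(g)V_1$ unchanged.
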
  

\proof 
By \cite[Exercise~4.4 (c)]{FH}, $V_{{}^t\!\lambda} \cong V_{\lambda}\otimes_{\Bbb K} V_{(1^n)}$, where 
$V_{(1^n)}$ is the sign representation.      
The statement follows from the definition of thickness. 
\qed 

\begin{proposition}\label{prop:221} 
For $n\ge 5$, $V_{(2, 2, 1^{n-4})}$ is not $2$-thick. In particular, it is not thick. 
\end{proposition}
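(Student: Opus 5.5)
The conjugate partition of $(n-2,2)$ is ${}^t(n-2,2) = (2,2,1^{n-4})$. The plan is to transfer the non-$2$-thickness of $V_{(n-2,2)}$, established in Proposition~\ref{prop:okuyama-omoda}, to $V_{(2,2,1^{n-4})}$ via the tensor twist by the sign representation. Proposition~\ref{prop:dualrep} is exactly the tool for this.

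Concretely, for $n\ge 5$, Proposition~\ref{prop:okuyama-omoda} gives that $V_{(n-2,2)}$ is not $2$-thick. By Proposition~\ref{prop:dualrep}, applied with $\lambda=(n-2,2)$ and $m=2$, the representation $V_{\lambda}$ is $2$-thick if and only if $V_{{}^t\lambda}=V_{(2,2,1^{n-4})}$ is $2$-thick. Hence $V_{(2,2,1^{n-4})}$ is not $2$-thick.

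The justification underlying Proposition~\ref{prop:dualrep} is elementary. We have $V_{{}^t\lambda}\cong V_\lambda\otimes V_{(1^n)}$, so $\sigma$ acts on $V_{{}^t\lambda}$ by $\pm\rho_\lambda(\sigma)$. Since subspaces are unchanged under scalar multiplication, the condition $(\rho(\sigma)V_1)\cap V_2=0$ is the same for both representations.

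For the final clause, non-$2$-thickness rules out thickness, because thick means $m$-thick for every $0<m<\dim V$. This requires $2<\dim V_{(2,2,1^{n-4})}$. That dimension equals $d(n-2,2)=n(n-3)/2\ge 5$ for $n\ge 5$, so $m=2$ is indeed in the allowed range. There is no real obstacle here: the only points to check are the identification of the conjugate partition and that $m=2$ lies in the range $0<m<\dim V$.
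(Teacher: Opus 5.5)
Your proposal is correct and follows the paper's proof: identify ${}^t(n-2,2)=(2,2,1^{n-4})$, then combine Proposition~\ref{prop:okuyama-omoda} with Proposition~\ref{prop:dualrep}. Your extra check that $\dim_{\Bbb K} V_{(2,2,1^{n-4})}=n(n-3)/2\ge 5$, so that $m=2$ lies in the range $0<m<\dim_{\Bbb K} V$, is a harmless detail that the paper leaves implicit.
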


\proof
Note that ${}^t \lambda=(2, 2, 1^{n-4})$ for $\lambda=(n-2, 2)$. 
The statement follows from Propositions~\ref{prop:okuyama-omoda} and 
\ref{prop:dualrep}. 
\qed

\section{The case $\lambda = (m_1, m_2)$ with $m_1\ge m_2 \ge 2$}
\label{section:s=2}  
In this section, we prove Proposition~\ref{prop:main5} in Case~1; namely, 
that $V_{(m_1, m_2)}$ is not thick if 
it is not contained in the list of Theorem~\ref{th:main3}. 
More precisely, if $\lambda=(m_1, m_2)$ with $m_1 \ge m_2 \ge 2$, excluding  
$\lambda=(2, 2)$ and $(3, 3)$, $V_{\lambda}$ is not thick. 
As mentioned in Section~\ref{section:thickrepresentations},  
$n=m_1+m_2$.  
Note that 
\begin{eqnarray}
d(m_1, m_2)=\dim_{{\Bbb K}} V_{(m_1, m_2)} = \dfrac{(m_1+m_2)!(m_1-m_2+1)}{(m_1+1)!m_2!} = \sharp Y^{\rm st}_{(m_1, m_2)} \label{eq:dimSm1m2} 
\end{eqnarray} 
by Theorem~\ref{th:basisofSpecht} and Proposition~\ref{prop:formuladim}. 


\begin{definition}\label{def:TijMij}\rm 
For $1\le i\neq  j \le n$, let $M_{i,j}$ be the subspace of $V_{(m_1, m_2)}$ spanned by 
\[
T_{i, j} = \left\{  \begin{array}{c|c} 
{\small\begin{ytableau}
        i & a_3 & a_5 & \cdots & a_{n}\\
        j & a_4 & \cdots 
\end{ytableau}} \in Y_{(m_1, m_2)} & \{i, j, a_3, \ldots, a_n \} =\{ 1, 2, \ldots, n \}  
\end{array} 
\right\}.  
\] 
Set $W_1=M_{1, 2}$. For $\sigma \in S_n$, $\sigma(W_1)=M_{\sigma(1), \sigma(2)}$. 
Note that $\dim_{{\Bbb K}} W_1=\sharp Y^{\rm st}_{(m_1-1, m_2-1)} = d(m_1-1, m_2-1)$.  
\end{definition} 

\bigskip 

Suppose that a subspace $W_2$ of $V_{(m_1, m_2)}$ satisfies the following property:
\begin{align}
\mbox{For any $1 \le i \neq j \le n$}, 
\mbox{ there exists a (not necessarily standard) tableau }  \label{property:W2}   \\ 
{\small\begin{ytableau}
        i & a_3 & a_5 & \cdots & a_{n} \\
        j & a_4 & \cdots 
\end{ytableau}} \in W_2. \nonumber 
\end{align}

\begin{proposition}\label{prop:W1W2}
Let $W_1$ and $W_2$ be as above. Then $V_{(m_1, m_2)}$ is not 
$(\dim_{{\Bbb K}} W_1, \dim_{{\Bbb K}} W_2)$-thick. 
If $\dim_{{\Bbb K}} W_1+ \dim_{{\Bbb K}} W_2 \le d(m_1, m_2)$, then 
$V_{(m_1, m_2)}$ is not thick. In particular, Proposition~\ref{prop:main5} holds in this case. 
\end{proposition}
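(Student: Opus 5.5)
The idea is to show directly that $\sigma(W_1)\cap W_2\neq 0$ for every $\sigma\in S_n$, and then read off non-thickness from the definitions and Proposition~\ref{prop:ijthick}.

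\textbf{Step 1: the translates of $W_1$.} Fix $\sigma\in S_n$ and set $i=\sigma(1)$, $j=\sigma(2)$; these are distinct. Definition~\ref{def:TijMij} gives $\sigma(W_1)=M_{i,j}$. To justify this, note that $\sigma$ acts on Specht polynomials by relabelling entries, so $\sigma f(y)=f(\sigma y)$. Hence $\sigma$ sends $T_{1,2}$ bijectively onto $T_{i,j}$, and therefore sends its span $M_{1,2}$ onto $M_{i,j}$.

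\textbf{Step 2: a common nonzero vector.} By property~(\ref{property:W2}) there is a tableau $y$ with first column $(i,j)$ whose Specht polynomial $f(y)$ lies in $W_2$. By definition $y\in T_{i,j}$, so $f(y)\in M_{i,j}=\sigma(W_1)$. It remains to check $f(y)\neq 0$. This holds because $f(y)$ is a product of differences $x_a-x_b$ with $a\neq b$, and each such factor is a nonzero element of the domain ${\Bbb Z}[x_1,\ldots,x_n]$. After tensoring with ${\Bbb K}$ it stays nonzero, since its coefficients are $\pm1$ and ${\Bbb K}$ has characteristic zero. So $f(y)$ is a nonzero vector in $\sigma(W_1)\cap W_2$.

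\textbf{Step 3: conclusions.} Write $a=\dim_{{\Bbb K}} W_1$ and $b=\dim_{{\Bbb K}} W_2$. Since $\sigma(W_1)\cap W_2\neq 0$ for all $\sigma$, taking $V_1=W_1$ and $V_2=W_2$ in the definition shows directly that $V_{(m_1,m_2)}$ is not $(a,b)$-thick. Now suppose $a+b\le d(m_1,m_2)$.
\begin{itemize}
\item We have $a\ge 1$, since $W_1$ contains a nonzero Specht polynomial, and $b\ge1$, since $W_2$ contains one by Step 2.
\item Take $m=\min\{a,b\}$. This lies in the range $\min\{a,b\}\le m\le d-\max\{a,b\}$ because $a+b\le d$.
\item Proposition~\ref{prop:ijthick} then shows $V_{(m_1,m_2)}$ is not $m$-thick, hence not thick.
\end{itemize}
Because $W_1$ and $W_2$ are subspaces with $a+b\le\dim V_\lambda$ that meet every translate nontrivially, they are exactly the data required by Proposition~\ref{prop:main5} in this case.

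There is no real obstacle here; the substantive work is to construct a small $W_2$ satisfying property~(\ref{property:W2}) together with the dimension bound, and that is done later. The only point needing a little care is the equivariance $\sigma f(y)=f(\sigma y)$ and the resulting equality $\sigma(M_{1,2})=M_{\sigma(1),\sigma(2)}$ in Step 1.
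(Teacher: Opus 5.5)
Your proposal is correct and follows the paper's argument: $\sigma(W_1)=M_{\sigma(1),\sigma(2)}$ together with property~(\ref{property:W2}) gives $\sigma(W_1)\cap W_2\neq 0$ for every $\sigma\in S_n$, and non-thickness then follows from Proposition~\ref{prop:ijthick}. You also make explicit what the paper's one-line proof leaves implicit: the equivariance $\sigma f(y)=f(\sigma y)$, the nonvanishing of the common Specht polynomial, and the choice $m=\min\{a,b\}$, which gives $0<m<d$.
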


\proof
Since $\sigma(W_1)=M_{\sigma(1), \sigma(2)}$ and $W_2$ has the property (\ref{property:W2}), $\sigma(W_1)\cap W_2\neq 0$ for any $\sigma \in S_n$.  
\qed 

\bigskip 

By Proposition~\ref{prop:W1W2}, it suffices to find a subspace $W_2$ of $V_{(m_1, m_2)}$ 
satisfying (\ref{property:W2}) in order to prove Proposition~\ref{prop:main5}. 
We prove Proposition~\ref{prop:main5} by dividing the argument into several cases. 
 

\bigskip 

First, let us consider the subcase $\lambda=(m_1, 2)$. 
Let $m_1=n-2$ and $m_2=2$. 
By Proposition~\ref{prop:okuyama-omoda}, $V_{(n-2, 2)}$ is not $2$-thick for $n\ge 5$, which 
implies that Proposition~\ref{prop:main5} holds in the subcase $\lambda=(m_1, 2)$ with $m_1\ge 3$. Moreover, we have a more refined result for the 
$(i, j)$-thickness of $V_{(n-2, 2)}$ for $n\ge 7$. 
 
\begin{example}\label{ex:n-2,2}\rm
Note that 
\begin{eqnarray*}
\dim_{{\Bbb K}} V_{(n-2, 2)}=\dfrac{n!(n-3)}{(n-1)!2!} = \dfrac{n(n-3)}{2},  \\
\dim_{{\Bbb K}} W_1 = \sharp Y^{\rm st}_{(n-3, 1)} = d(n-3, 1) = n-3.  
\end{eqnarray*} 
Let $W_2$ be the subspace of $V$ spanned by 
\begin{align*} 
\ytableausetup{boxsize=2.2em}
T_{1,2} \cup \left\{  \begin{array}{c|c} 
       {\small\begin{ytableau}
        1 & 2 & \scriptstyle a_5(i) & \cdots & \scriptstyle a_{n}(i)\\
        i & i+1  
\end{ytableau}} \in  Y_{(m_1, m_2)} & i=3, 4, \ldots, n-1 
\end{array} \right\}  \\
\cup \left\{ {\small\begin{ytableau}
        1 & 2 & \scriptstyle a_5(n) & \cdots & \scriptstyle a_{n}(n)\\
        n & 3  
\end{ytableau}} \in  Y_{(m_1, m_2)} \right\},   
\end{align*} 
where we choose a tuple $(a_5(i), \ldots, a_n(i))$ for $i=3, \ldots, n$.  
Then $W_2$ has the property (\ref{property:W2}) and $\dim_{{\Bbb K}} W_2 \le \dim_{{\Bbb K}} W_1+(n-2)=2n-5$. 
By direct calculation, 
$\dim_{{\Bbb K}} W_1+\dim_{{\Bbb K}} W_2 \le 3n-8 \le \dfrac{n(n-3)}{2} =\dim_{{\Bbb K}} V_{(n-2, 2)}$ for 
$n\ge 7$. Hence, $V_{(n-2, 2)}$ is not $(n-3, 2n-5)$-thick for $n\ge 7$ by Remark~\ref{rem:thickdiagram}, since $V_{(n-2, 2)}$ is not $(\dim_{{\Bbb K}} W_1, \dim_{{\Bbb K}} W_2)$-thick.

\end{example} 

By Example~\ref{ex:n-2,2}, we have the following proposition: 

\begin{proposition}
For $n\ge 7$, $V_{(n-2, 2)}$ is not $(n-3, 2n-5)$-thick. 
In particular, it is not $m$-thick for any $n-3 \le m \le \dfrac{(n-2)(n-5)}{2}$. 
\end{proposition}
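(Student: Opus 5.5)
The first assertion is exactly what Example~\ref{ex:n-2,2} establishes, so the plan is to record it cleanly and then apply Proposition~\ref{prop:ijthick} to get the range of $m$.

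\textbf{Non-thickness.} Take $W_1=M_{1,2}$ as in Definition~\ref{def:TijMij}, so $\dim_{\Bbb K}W_1=d(n-3,1)=n-3$. Take $W_2$ to be the span described in Example~\ref{ex:n-2,2}.

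First, $W_2$ has property~(\ref{property:W2}). For any ordered pair $(i,j)$ with $i\neq j$, we need a tableau with first column $(i,j)$ lying in $W_2$, and it is obtained from the generators as follows:
\begin{itemize}
\item the pair $\{1,2\}$ is handled by $T_{1,2}$ directly;
\item by column exchange (the Remark after Theorem~\ref{th:Garnir's relation}), a tableau with columns $(1,i),(2,i+1)$ equals one with first column $(2,i+1)$, and after a sign it gives first column $(i,1)$ or $(i+1,2)$, and so on;
\item the remaining pairs are reached by the Garnir-type manipulations used in Proposition~\ref{prop:okuyama-omoda}, which write a tableau with first column $(i,j)$ as a combination of the chosen ones.
\end{itemize}
I expect this verification to be the main obstacle. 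I would check it case by case on whether $\{i,j\}$ meets $\{1,2\}$, using that the cycle $3\to4\to\cdots\to n\to3$ connects all of $3,\ldots,n$.

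Granting this, Proposition~\ref{prop:W1W2} shows that $V_{(n-2,2)}$ is not $(\dim W_1,\dim W_2)$-thick. Since $\dim W_2\le (n-3)+(n-2)=2n-5$, Remark~\ref{rem:thickdiagram} upgrades this to non-$(n-3,2n-5)$-thickness. This requires
\[
(n-3)+(2n-5)=3n-8\le \frac{n(n-3)}{2},
\]
which is equivalent to $n^2-9n+16\ge 0$ and holds for $n\ge 7$.

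\textbf{Range of $m$.} Set $N=n(n-3)/2$ and apply Proposition~\ref{prop:ijthick} with $i=n-3$ and $j=2n-5$. For $n\ge7$ we have $\min\{i,j\}=n-3$ and $\max\{i,j\}=2n-5$. The first range in that proposition is
\[
n-3\le m\le N-(2n-5)=\frac{n^2-7n+10}{2}=\frac{(n-2)(n-5)}{2}.
\]
This interval is nonempty because $3n-8\le N$. So $V_{(n-2,2)}$ is not $m$-thick for every $m$ in this range, which is the claimed statement.
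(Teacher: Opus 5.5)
Your route is the paper's route: $W_1=M_{1,2}$ and the $W_2$ of Example~\ref{ex:n-2,2}, the bound $\dim W_2\le (n-3)+(n-2)=2n-5$, Remark~\ref{rem:thickdiagram} via $3n-8\le n(n-3)/2$ for $n\ge 7$, and Proposition~\ref{prop:ijthick} with $N-(2n-5)=(n-2)(n-5)/2$. All of this is correct.

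The step you defer is property~(\ref{property:W2}) for pairs $\{k,l\}\subseteq\{3,\ldots,n\}$. As written, your appeal to ``Garnir-type manipulations'' and to the connectivity of the cycle is not a verification, and it points the wrong way. No Garnir relation is needed. Since $m_2=2$, the set $T_{1,2}$ contains the tableau whose two height-$2$ columns are $(1,2)$ and $(k,l)$. Exchanging these two columns gives a tableau with first column $(k,l)$ and the same polynomial $(x_1-x_2)(x_k-x_l)$, which lies in $W_2$.

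The cycle $3\to4\to\cdots\to n\to3$ is only used for the pairs $\{2,k\}$. These come from column exchange in the generators whose second column is $(2,i+1)$ for $i\le n-1$, or $(2,3)$ for the last generator. The pairs $\{1,k\}$ are read off directly from the generators' first columns, and reversing a column only changes the sign.

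Adding this one line completes the argument. The paper itself states the property for this $W_2$ without further comment.
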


\proof
The latter part follows from Proposition~\ref{prop:ijthick}.  
\qed 

\ytableausetup{boxsize=normal}

\bigskip 

In the sequel, we assume that $m_1 \ge m_2 \ge 3$, except for $\lambda=(3, 3)$.  
Then $n=m_1+m_2\ge 7$.

\begin{definition}\label{def:standardwrtS}\rm
A tableau is {\it standard with respect to $S$} (or {\it w.r.t. $S$}) for 
a subset $S \subseteq 
\{1, \ldots, n\}$ if it is filled with the
elements of $S$, each appearing exactly once, and the entries increase
strictly along rows and down columns. 
Set 
\[
T'_{1,2} = \left\{  \begin{array}{c|c} 
{\small\begin{ytableau}
        1 & a_3 & a_5 & \cdots & a_{n}\\
        2 & a_4 & \cdots 
\end{ytableau}} \in Y_{(m_1, m_2)} & {\small\begin{ytableau}
         a_3 & a_5 & \cdots & a_{n}\\
         a_4 & \cdots 
\end{ytableau}} \mbox{ is standard w.r.t. } \{ 3, 4, \ldots, n \}  
\end{array} 
\right\}
\]
and 
\[
T'_{1,3} = \left\{  \begin{array}{c|c} 
{\small\begin{ytableau}
        1 & a_3 & a_5 & \cdots & a_{n}\\
        3 & a_4 & \cdots 
\end{ytableau}} \in  Y_{(m_1, m_2)} & {\small\begin{ytableau}
         a_3 & a_5 & \cdots & a_{n}\\
         a_4 & \cdots 
\end{ytableau}} \mbox{ is standard w.r.t. } \{ 2, 4, \ldots, n \}  
\end{array} 
\right\}. 
\] 
Note that $T'_{1,2} \coprod  T'_{1,3} \subseteq Y^{\rm st}_{(m_1, m_2)}$ and 
that $\sharp T'_{1,2}= \sharp T'_{1,3} = d(m_1-1, m_2-1)$. 
We put $Z_{(m_1, m_2)} = Y^{\rm st}_{(m_1, m_2)}\setminus (T'_{1,2} \coprod  T'_{1,3})$. 
\end{definition}

\begin{remark}\rm 
The subspaces $M_{1, 2}$ and $M_{1, 3}$ of $V$ are generated by $T'_{1, 2}$ and $T'_{1, 3}$, respectively. 
Furthermore, $M_{1, 2} \cap M_{1,3}=0$ and 
$\dim_{{\Bbb K}} M_{1, 2} = \dim_{{\Bbb K}} M_{1, 3} = \sharp T'_{1, 2} = \sharp T'_{1, 3}= d(m_1-1, m_2-1)$.  
\end{remark}

\begin{definition}\label{def:m1m2caseW2}\rm 
Let $W_2$ be the subspace of $V_{(m_1, m_2)}$ spanned by 
\[
\begin{array}{cc} 
\ytableausetup{boxsize=2.2em}
T_{1,2} \cup \left\{  \begin{array}{c|c} 
       {\small \begin{ytableau}
        1 & 2 & \scriptstyle a_5(i) & \cdots & \scriptstyle a_{n}(i)\\
        \scriptstyle 2i-1 & 2i & \cdots  
\end{ytableau}} \in  Y_{(m_1, m_2)} & i=2, 3, \ldots, \dfrac{n}{2}  
\end{array} \right\} & \mbox{ if $n$ is even}, \\ 
\begin{array}{c} T_{1,2} \cup \left\{  \begin{array}{c|c} 
       {\small \begin{ytableau}
        1 & 2 & \scriptstyle a_5(i) & \cdots & \scriptstyle a_{n}(i)\\
        \scriptstyle 2i-1 & 2i & \cdots  
\end{ytableau}} \in  Y_{(m_1, m_2)} & i=2, 3, \ldots, \dfrac{n-1}{2}  
\end{array} \right\} \\ 
\cup \left\{
{\small \ytableausetup{boxsize=3em} \begin{ytableau}
        1 & 2 & \scriptstyle a_5(\frac{n+1}{2}) & \cdots & \scriptstyle a_{n}(\frac{n+1}{2})\\
        \scriptstyle n & b & \cdots  
\end{ytableau}} \in  Y_{(m_1, m_2)} 
\right\} \end{array} 
& \mbox{ if $n$ is odd},   
\end{array} 
\]
where we arbitrarily choose a number $b$ and a tuple $(a_5(i), \ldots, a_n(i))$ for each $i=2, \ldots, \left[\dfrac{n+1}{2}\right]$ to obtain Young tableaux. 
Then $W_2$ has the property (\ref{property:W2}), because 
\[\ytableausetup{boxsize=2.2em} 
{\small\begin{ytableau}
      1 & 2 & \scriptstyle a_5(i) & \cdots \\
      2i &  \scriptstyle  2i-1 & \cdots 
\end{ytableau}} = 
{\small\begin{ytableau}
      1 & 2i & \scriptstyle a_5(i) & \cdots \\
      2 &  \scriptstyle 2i-1 & \cdots 
\end{ytableau}}
+ 
{\small\begin{ytableau}
      1 & 2 & \scriptstyle a_5(i) & \cdots \\
      \scriptstyle 2i-1 & 2i & \cdots 
\end{ytableau}}  \in W_2 
\]
and 
\[\ytableausetup{boxsize=3em}  
{\small\begin{ytableau}
      1 & 2 & \scriptstyle a_5(\frac{n+1}{2}) & \cdots \\
      b &  n & \cdots 
\end{ytableau}} = 
{\small\begin{ytableau}
      1 & b & \scriptstyle a_5(\frac{n+1}{2}) & \cdots \\
      2 & n & \cdots 
\end{ytableau}}
+ 
{\small\begin{ytableau}
      1 & 2 & \scriptstyle a_5(\frac{n+1}{2}) & \cdots \\
      n & b & \cdots 
\end{ytableau}} \in W_2. 
\]
Note that $\dim_{{\Bbb K}} W_2 \le \dim_{{\Bbb K}} W_1+\left[\dfrac{n-1}{2}\right] = \sharp T'_{1,3} +\left[\dfrac{n-1}{2}\right]$.  
\end{definition} 

\bigskip 

\ytableausetup{boxsize=normal}

Choosing $W_2$ as in Definition~\ref{def:m1m2caseW2}, we obtain the following result. 

\begin{proposition}\label{prop:usefulpropnotthick}
Let $m_1\ge m_2\ge 3$ and $n=m_1+m_2\ge 7$.  
Then $V_{(m_1, m_2)}$ is not $(d(m_1-1, m_2-1), d(m_1-1, m_2-1)+\left[\dfrac{n-1}{2}\right])$-thick. 
If 
\begin{align} 
\sharp Z_{(m_1, m_2)} \ge \left[\dfrac{n-1}{2}\right], \label{ineq:Zgen-1/2} 
\end{align}  
then $2d(m_1-1, m_2-1)+ \left[\dfrac{n-1}{2}\right] \le d(m_1, m_2)$ and 
$V_{(m_1, m_2)}$ is not thick. 
\end{proposition}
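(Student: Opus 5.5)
The proof combines Proposition~\ref{prop:W1W2} with the explicit choice of $W_2$ in Definition~\ref{def:m1m2caseW2}. Two dimension counts and one counting identity are needed.

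\emph{Step 1: the non-thickness statement.} Take $W_1=M_{1,2}$, which has dimension $d(m_1-1,m_2-1)$ by Definition~\ref{def:TijMij}. Take $W_2$ as in Definition~\ref{def:m1m2caseW2}. That definition already checks property~(\ref{property:W2}) for $W_2$. Indeed, for any $1\le i\neq j\le n$:
\begin{itemize}
\item if $1\in\{i,j\}$ or $2\in\{i,j\}$, a suitable tableau already lies in $T_{1,2}$, or comes from it by a column swap or column-permutation sign;
\item otherwise, the displayed Garnir-type identities produce, for each pair $\{2i-1,2i\}$ (and $\{n,b\}$ when $n$ is odd), a tableau with first column $(1,\cdot)$ or $(\cdot,\cdot)$ containing the required entries.
\end{itemize}

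I would re-verify one point here. Property~(\ref{property:W2}) is really needed only in the form ``$M_{i,j}\cap W_2\neq0$ for every pair $i\neq j$''. For a pair $\{i,j\}$ with $1,2\notin\{i,j\}$, one uses $M_{i,j}=M_{j,i}$ up to sign and the action of $S_n$ on the remaining entries. This is the step I expect to need the most care: the listed tableaux only cover pairs of the form $(2i-1,2i)$, and I must confirm how the paper's argument reaches a general pair $\{i,j\}$. Since $\sigma(W_1)=M_{\sigma(1),\sigma(2)}$, and $M_{i,j}$ contains every tableau with first column $(i,j)$, I would exhibit for each $(i,j)$ an element of $W_2$ with first column $(i,j)$. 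This likely uses the fact that $W_2\supseteq M_{1,2}$ together with Garnir relations expressing tableaux in terms of those with first column $(1,2)$.

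Given property~(\ref{property:W2}), Proposition~\ref{prop:W1W2} yields that $V_{(m_1,m_2)}$ is not $(\dim W_1,\dim W_2)$-thick. The bound $\dim W_2\le d(m_1-1,m_2-1)+[\frac{n-1}{2}]$ holds because $W_2$ is spanned by $T_{1,2}$, whose span is $M_{1,2}$, plus $[\frac{n-1}{2}]$ extra vectors. Enlarging $W_2$ to exactly that dimension preserves the intersection property, so Remark~\ref{rem:thickdiagram} gives non-$(d(m_1-1,m_2-1),\,d(m_1-1,m_2-1)+[\frac{n-1}{2}])$-thickness.

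\emph{Step 2: the dimension inequality.} $T'_{1,2}$ and $T'_{1,3}$ are disjoint subsets of $Y^{\rm st}_{(m_1,m_2)}$, each of cardinality $d(m_1-1,m_2-1)$, and $Z_{(m_1,m_2)}$ is the complement of their union. Hence
\[
d(m_1,m_2)=\sharp Y^{\rm st}_{(m_1,m_2)}=2d(m_1-1,m_2-1)+\sharp Z_{(m_1,m_2)}.
\]
Under~(\ref{ineq:Zgen-1/2}) this gives $2d(m_1-1,m_2-1)+[\frac{n-1}{2}]\le d(m_1,m_2)$. Disjointness and the cardinalities are clear: tableaux in $T'_{1,2}$ have $2$ below $1$, those in $T'_{1,3}$ have $3$ below $1$, and the remaining entries form a standard $(m_1-1,m_2-1)$-tableau on an $(n-2)$-element set.

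\emph{Step 3: conclusion.} With $i=d(m_1-1,m_2-1)$ and $j=i+[\frac{n-1}{2}]$ we now have $i+j\le d(m_1,m_2)$ and $V_{(m_1,m_2)}$ not $(i,j)$-thick. Proposition~\ref{prop:ijthick} then shows $V_{(m_1,m_2)}$ is not $m$-thick for $m=\min\{i,j\}=i$, and in particular not thick.
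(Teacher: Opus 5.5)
Your argument has the same architecture as the paper's proof. You take $W_1=M_{1,2}$ and $W_2$ from Definition~\ref{def:m1m2caseW2}, apply Proposition~\ref{prop:W1W2}, use monotonicity of non-thickness in $(i,j)$, count $d(m_1,m_2)=2d(m_1-1,m_2-1)+\sharp Z_{(m_1,m_2)}$, and finish with Proposition~\ref{prop:ijthick}. Steps 2 and 3 and the bound $\dim W_2\le d(m_1-1,m_2-1)+[\frac{n-1}{2}]$ are correct. Citing Definition~\ref{def:m1m2caseW2} for property~(\ref{property:W2}) is exactly what the paper does. What is wrong is your sketch of why (\ref{property:W2}) holds, including the step you left unresolved.

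\textbf{The cases in your bullets are reversed.} The extra tableaux do not cover pairs $(2i-1,2i)$: the tableau with rows $(1,2,a_5(i),\dots)$ and $(2i-1,2i,\dots)$ has first column $(1,2i-1)$ and second column $(2,2i)$. The correct check runs as follows.
\begin{itemize}
\item If $\{i,j\}\cap\{1,2\}=\emptyset$, take the element of $T_{1,2}$ whose second column is $(i,j)$ and swap its first two columns. Both have length $2$, so $f$ is unchanged. These pairs therefore come from $M_{1,2}$ alone, with no Garnir identity needed.
\item The pairs $(1,k)$ with $k\ge 3$ are what the extra vectors are for. The pair $(1,2i-1)$ is the listed tableau itself, and $(1,2i)$ is the left side of the first displayed identity. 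For odd $n$, $(1,n)$ is the last listed tableau.
\item The pairs $(2,k)$ follow by swapping the first two columns of the tableaux in the previous item. For odd $n$, $(2,n)$ uses the second displayed identity.
\item The pair $(1,2)$ lies in $T_{1,2}$, and reversed pairs follow from the sign of a transposition inside a column.
\end{itemize}

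\textbf{Your proposed mechanism cannot work.} You guess that general pairs are reached via ``Garnir relations expressing tableaux in terms of those with first column $(1,2)$''. This fails for $(1,k)$. Since $T'_{1,2}$ and $T'_{1,3}$ are disjoint parts of the standard basis, $M_{1,2}\cap M_{1,3}=0$, so $M_{1,2}$ alone never meets $M_{1,3}$. The $[\frac{n-1}{2}]$ additional spanning vectors are genuinely needed, which is exactly why they appear in the dimension count.
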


\proof
Let $W_1$ and $W_2$ be as above. 
The first part follows from the properties of $W_1$ and $W_2$. 
Suppose that $\sharp Z_{(m_1, m_2)} \ge \left[\dfrac{n-1}{2}\right]$. Since 
\begin{align*}
\dim_{{\Bbb K}} W_1+\dim_{{\Bbb K}} W_2 & \le \sharp T'_{1,2} + \sharp T'_{1,3} + \left[\dfrac{n-1}{2}\right]  \\
 & \le \sharp T'_{1,2} + \sharp T'_{1,3} + \sharp Z_{(m_1, m_2)} \\
 & = d(m_1, m_2),   
\end{align*} 
$2d(m_1-1, m_2-1)+ \left[\dfrac{n-1}{2}\right] \le d(m_1, m_2)$ and $V_{(m_1, m_2)}$ is not thick. 
\qed 

\bigskip 

To prove Proposition~\ref{prop:main5} for $\lambda=(m_1, m_2)$ with $m_1\ge m_2\ge 3$ and $n =m_1+m_2\ge 7$, we only need to prove 
$\sharp Z_{(m_1, m_2)} \ge \left[\dfrac{n-1}{2}\right]$ by Proposition~\ref{prop:usefulpropnotthick}. 

\begin{proposition}
Let $m_1>m_2=3$ and $n=m_1+m_2\ge 7$. 
Then $\sharp Z_{(m_1, m_2)} \ge \left[\dfrac{n-1}{2}\right]$. 
In particular, $V_{(n-3, 3)}$ is not thick for $n\ge 7$. 
\end{proposition}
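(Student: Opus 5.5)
Since $Z_{(m_1,3)} = Y^{\rm st}_{(m_1,3)}\setminus(T'_{1,2}\coprod T'_{1,3})$ and $\sharp T'_{1,2}=\sharp T'_{1,3}=d(m_1-1,2)$, I would obtain the bound by counting:
\[
\sharp Z_{(n-3,3)} = d(n-3,3) - 2\,d(n-4,2),
\]
and then compare the result with $\left[\frac{n-1}{2}\right]$. After that, Proposition~\ref{prop:usefulpropnotthick} gives the non-thickness conclusion directly.

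The first step is to evaluate both dimensions from (\ref{eq:dm1m2}). For the first one,
\[
d(n-3,3)=\frac{n!\,(n-5)}{(n-2)!\,3!}=\frac{n(n-1)(n-5)}{6}.
\]
For the second, $d(n-4,2)$ is the dimension of $V_{(n'-2,2)}$ with $n'=n-2$, so by the computation in Example~\ref{ex:n-2,2} it equals $\frac{(n-2)(n-5)}{2}$. Hence
\[
\sharp Z_{(n-3,3)}=\frac{n(n-1)(n-5)}{6}-(n-2)(n-5)=\frac{(n-5)\bigl(n^2-7n+12\bigr)}{6}=\frac{(n-3)(n-4)(n-5)}{6}=\binom{n-3}{3}.
\]
As a sanity check, this count also follows combinatorially. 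A standard tableau lies in $Z$ exactly when the entry below $1$ is neither $2$ nor $3$. Since the first column of a standard tableau must begin $1,2$ unless $2$ sits in the first row, this forces both $2$ and $3$ into the first row.

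The remaining step is the inequality $\binom{n-3}{3}\ge\left[\frac{n-1}{2}\right]$ for $n\ge 7$. At $n=7$ we have $\binom{4}{3}=4\ge 3$. For $n\ge 7$ the left side increases by $\binom{n-3}{2}\ge 6$ each time $n$ increases by one, while the right side increases by at most $1$. So the inequality holds for all $n\ge 7$, and Proposition~\ref{prop:usefulpropnotthick} yields that $V_{(n-3,3)}$ is not thick.

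The only delicate point is the algebraic simplification together with the valid range. When $n=6$ the partition is $(3,3)$, which is excluded, and there $\binom{3}{3}=1<2$; this explains why the statement requires $n\ge 7$, i.e.\ $m_1>m_2=3$. I expect no real obstacle beyond that.
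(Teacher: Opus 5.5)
Your proof is correct and follows the paper's route: show $\sharp Z_{(n-3,3)}\ge\binom{n-3}{3}$, check that $\binom{n-3}{3}\ge\left[\frac{n-1}{2}\right]$ for $n\ge 7$, and invoke Proposition~\ref{prop:usefulpropnotthick}. The only difference is how you reach $\binom{n-3}{3}$: the paper exhibits the standard tableaux whose first row begins $1,2,3$, which is your ``sanity check'' (and, as you note, these make up all of $Z_{(n-3,3)}$), rather than computing $d(n-3,3)-2d(n-4,2)$ from the dimension formula; it then finishes with $\binom{n-3}{3}\ge n-3\ge\left[\frac{n-1}{2}\right]$ instead of your increment argument.
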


\proof 
Recall $Z_{(m_1, m_2)} = Y^{\rm st}_{(m_1, m_2)} \setminus (T'_{1,2} \coprod  T'_{1,3})$. 
Since 
\begin{eqnarray*} 
Z_{(m_1, m_2)} & \supset & 
\left\{
\begin{array}{c|c}
 {\small\begin{ytableau}
        1 & 2 & 3 &  \cdots  \\
        i & j & k   
\end{ytableau}} & 4\le i<j<k \le n  
\end{array}
\right\},  
\end{eqnarray*} 
$\sharp Z_{(m_1, m_2)} \ge \binom{n-3}{3} = (n-3)\dfrac{(n-4)\cdot (n-5)}{3\cdot 2}\ge n-3 \ge \left[\dfrac{n-1}{2}\right]$ for $n\ge 7$. 
The statement follows from Proposition~\ref{prop:usefulpropnotthick}. 
\qed 

\begin{proposition}
Let $m_1\ge m_2=4$ and $n=m_1+m_2\ge 8$. 
Then $\sharp Z_{(m_1, m_2)} \ge \left[\dfrac{n-1}{2}\right]$. 
In particular, $V_{(n-4, 4)}$ is not thick for $n\ge 8$. 
\end{proposition}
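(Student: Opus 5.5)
We mirror the argument given for $m_2=3$: we exhibit an explicit family of standard $(m_1,4)$-tableaux lying in $Z_{(m_1,4)}$, count it, and compare the count with $\left[\dfrac{n-1}{2}\right]$. The statement then follows from Proposition~\ref{prop:usefulpropnotthick}.

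Recall that $T'_{1,2}$ consists of the standard tableaux having $2$ directly below $1$, and $T'_{1,3}$ of those having $3$ directly below $1$. A standard tableau therefore lies in $Z_{(m_1,m_2)}$ exactly when the entry in position $(2,1)$ is at least $4$. We use the family
\[
\left\{
\begin{array}{c|c}
{\small\begin{ytableau}
1 & 2 & 3 & 4 & \cdots \\
i & j & k & l
\end{ytableau}} & 5\le i<j<k<l\le n
\end{array}
\right\},
\]
in which the remaining entries $\{5,\ldots,n\}\setminus\{i,j,k,l\}$ fill the first row after $4$ in increasing order. These tableaux are standard. Rows increase by construction. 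For the columns, the entry of the first row in column $c\le 4$ is $c$, and the entry below it is at least $5$. The first row has $m_1\ge 4$ boxes, so all four second-row boxes sit under the first four columns, and nothing else needs checking. Since $i\ge 5>3$, each such tableau lies in $Z_{(m_1,4)}$. Distinct choices of $(i,j,k,l)$ give distinct standard tableaux, and standard tableaux form a basis by Theorem~\ref{th:basisofSpecht}, so
\[
\sharp Z_{(m_1,4)}\ge \binom{n-4}{4}.
\]

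It remains to show $\binom{n-4}{4}\ge \left[\dfrac{n-1}{2}\right]$ for $n\ge 8$.
\begin{itemize}
\item At $n=8$ the left side is only $1$ while the right side is $3$, so this family alone is not enough there. We enlarge it with standard tableaux whose second row begins $4$, such as
\[
{\small\begin{ytableau}
1 & 2 & 3 & 5 \\
4 & 6 & 7 & 8
\end{ytableau}}
\quad\text{and}\quad
{\small\begin{ytableau}
1 & 2 & 3 & 6 \\
4 & 5 & 7 & 8
\end{ytableau}},
\]
which lie in $Z$ because $4\ge 4$. Together with the family above this gives at least $3$.
\item More uniformly, the full family is all standard tableaux whose first column is $(1,4)$ or $(1,t)$ with $t\ge 5$. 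Fixing the first row to start $1,2,3$ and choosing the second row from $\{4,\ldots,n\}$ gives $\binom{n-3}{4}$ standard tableaux. This count is $5$ at $n=8$ and is increasing in $n$.
\item The required bound $\binom{n-3}{4}\ge n-3\ge\left[\dfrac{n-1}{2}\right]$ holds for $n\ge 8$, since $\binom{n-3}{4}\ge\binom{n-3}{1}$ whenever $n-3\ge 5$, by Lemma~\ref{lemma:binomineq-1} together with the symmetry of binomial coefficients.
\end{itemize}

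So we use the family with second row $\{i<j<k<l\}\subseteq\{4,\ldots,n\}$ under the first row $1,2,3,\ldots$. Standardness holds because the first-row entries in columns $1$ to $4$ are at most $4$, and the entries below them are at least $4$ and strictly larger than the entry above.

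The main obstacle is verifying standardness in the boundary cases. The delicate ones are $l=4$-type overlaps (where $4$ could sit in the second row with column entries not increasing) and the requirement $m_1\ge 4$ so that the shape accommodates the second row. With the second row drawn from $\{4,\ldots,n\}$ and the first row $1,2,3$ followed by the leftover entries in increasing order, the entry above position $(2,4)$ is the smallest leftover element. This element is less than $l$, since the leftovers and $\{i,j,k\}$ together contain at least four elements below $l$. Once that is checked, Proposition~\ref{prop:usefulpropnotthick} gives that $V_{(n-4,4)}$ is not thick.
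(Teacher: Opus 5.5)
Your strategy is the paper's. You characterize $Z_{(m_1,4)}$ as the standard tableaux whose $(2,1)$-entry is at least $4$, exhibit and count explicit members, and invoke Proposition~\ref{prop:usefulpropnotthick}. Your first family ($5\le i<j<k<l\le n$) is exactly the paper's, and your $n=8$ patch plays the role of the paper's explicit list of $Z_{(4,4)}$.

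There is, however, a concrete error in the ``uniform'' family your final bound rests on: not every filling in it is standard. Take second row $4,5,6,7$. Then the leftovers are $\{8,\ldots,n\}$, so column $4$ has $8$ above $7$. Both justifications you give fail in exactly this case:
\begin{itemize}
\item the first-row entry in column $4$ is the smallest leftover, which need not be at most $4$;
\item when $l=7$ there are only three elements of $\{4,\ldots,n\}$ below $l$, and they can all lie in $\{i,j,k\}$.
\end{itemize}
So the family contains $\binom{n-3}{4}-1$ standard tableaux, not $\binom{n-3}{4}$. At $n=8$ this is $4$, not $5$, matching the paper's list of $Z_{(4,4)}$.

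The repair is immediate. The smallest leftover fails to be less than $l$ only if $\{4,\ldots,l-1\}\subseteq\{i,j,k\}$, and this forces $(i,j,k,l)=(4,5,6,7)$. Excluding that single filling gives $\binom{n-3}{4}-1$ tableaux in $Z$. This is in fact all of $Z_{(m_1,4)}$, because a $(2,1)$-entry of at least $4$ forces $1,2,3$ into the first three boxes of row one. Since $\binom{n-3}{4}\ge n-3$ for $n\ge 8$, you get $\binom{n-3}{4}-1\ge n-4\ge\left[\frac{n-1}{2}\right]$, and the conclusion stands.

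Alternatively, drop the uniform family altogether. Your first family gives $\binom{n-4}{4}\ge n-4\ge\left[\frac{n-1}{2}\right]$ for $n\ge 9$, and your three explicit tableaux handle $n=8$. That is essentially the paper's case split.
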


\proof
When $n=8$ or $9$, 
\begin{eqnarray*} 
Z_{(4, 4)} & = & 
\left\{
 {\small\begin{ytableau}
        1 & 2 & 3 & 4   \\
        5 & 6 & 7 & 8    
\end{ytableau}}, \quad    
 {\small\begin{ytableau}
        1 & 2 & 3 & 5   \\
        4 & 6 & 7 & 8    
\end{ytableau}}, \quad    
 {\small\begin{ytableau}
        1 & 2 & 3 & 6   \\
        4 & 5 & 7 & 8    
\end{ytableau}}, \quad    
 {\small\begin{ytableau}
        1 & 2 & 3 & 7   \\
        4 & 5 & 6 & 8    
\end{ytableau}}
\right\},  \\ 
Z_{(5, 4)} & \supset & 
\left\{
\begin{array}{c|c}
 {\small\begin{ytableau}
        1 & 2 & 3 & 4 & m   \\
        i & j & k & l 
\end{ytableau}} & 
\begin{array}{c} 
5\le i<j<k<l \le 9, \\ 
\{ i, j, k, l, m\} = \{ 5, 6, 7, 8, 9 \}   
\end{array} 
\end{array}
\right\}.  
\end{eqnarray*} 
Then 
\begin{align*} 
\sharp Z_{(4, 4)} & = 4 \ge \left[\dfrac{8-1}{2}\right] \text{ and }\; \sharp Z_{(5, 4)} \ge 5 \ge \left[\dfrac{9-1}{2}\right].  
\end{align*}
If $n\ge 10$, then 
\begin{align*}
Z_{(n-4, 4)} & \supset  \left\{
\begin{array}{c|c}
 {\small\begin{ytableau}
        1 & 2 & 3 & 4 & \cdots   \\
        i & j & k & l 
\end{ytableau}} & 
\begin{array}{c} 
5\le i<j<k<l \le n, \\ 
\{ i, j, k, l, \cdots \} = \{ 5, \ldots, n \}   
\end{array} 
\end{array}
\right\} 
\end{align*}
and 
\[
\sharp Z_{(n-4, 4)} \ge \binom{n-4}{4} \ge \binom{n-4}{2} = \dfrac{(n-4)(n-5)}{2} \ge \dfrac{5(n-4)}{2} \ge \left[\dfrac{n-1}{2}\right] 
\]     
by Lemma~\ref{lemma:binomineq-1}. 
This completes the proof.  
\qed 

\begin{proposition}\label{prop:inequalityofg}
If $m_1 \ge m_2\ge 5$, then $d(m_1, m_2) \ge 3 d(m_1-1, m_2-1)$ and $d(m_1-1, m_2-1)\ge m_1+m_2-3$.  
\end{proposition}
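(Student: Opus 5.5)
The plan is to compute both inequalities directly from the hook length formula (\ref{eq:dm1m2}) of Proposition~\ref{prop:formuladim}. Writing $n=m_1+m_2$, we have
\[
d(m_1,m_2)=\frac{n!\,(m_1-m_2+1)}{(m_1+1)!\,m_2!},\qquad d(m_1-1,m_2-1)=\frac{(n-2)!\,(m_1-m_2+1)}{m_1!\,(m_2-1)!}.
\]
The factor $m_1-m_2+1$ is the same in both, so it cancels in the ratio:
\[
\frac{d(m_1,m_2)}{d(m_1-1,m_2-1)}=\frac{n(n-1)}{(m_1+1)m_2}.
\]
The first inequality is therefore equivalent to $(m_1+m_2)(m_1+m_2-1)\ge 3(m_1+1)m_2$.

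To prove this polynomial inequality, I would expand and set $x=m_1$, $y=m_2$. It becomes
\[
x^2+2xy+y^2-x-y\ge 3xy+3y,
\]
that is,
\[
x^2-xy+y^2-x-4y\ge 0.
\]
Substituting $x=y+t$ with $t\ge 0$, the left side becomes $y^2+ty+t^2-5y-t=y(y-5)+t(y-1)+t^2$. Each term is nonnegative when $y\ge 5$, so the inequality holds. This is the only place the hypothesis $m_2\ge 5$ enters the first claim; indeed for $y=4$, $t=0$ the expression is negative, so the hypothesis is sharp.

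For the second inequality $d(m_1-1,m_2-1)\ge m_1+m_2-3$, I would avoid fiddling with factorials and instead use the monotonicity of $d$ via standard tableaux. Write $\mu=(m_1-1,m_2-1)$, with $m_1-1\ge m_2-1\ge 4$ and $|\mu|=n-2$. An explicit family of standard $\mu$-tableaux gives the bound. Fix the first column as $1$ over $2$. Then, for each choice of an entry $k\in\{3,\dots,n-2\}$ that is ``lifted'' into the second row, consider tableaux in which the second row is $\{2\}\cup\{\text{a consecutive block}\}$; this is routine to make precise. Alternatively, apply the hook formula once more: $d(a,b)\ge d(a,1)=a$ for $a\ge b\ge 1$ would only give $m_1-1$, which is too weak. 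A cleaner route is $d(a,b)\ge d(a+b-2,2)=\frac{(a+b)(a+b-3)}{2}$ for $b\ge2$. This follows by induction from the branching rule, or simply from counting standard tableaux whose first $b-2$ second-row entries are forced.

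So the concrete plan for the second part is to show $d(m_1-1,m_2-1)\ge\binom{n-2}{2}-(n-2)$, which by direct count is at least $n-3$ for $n\ge 10$. I would obtain the lower bound on standard tableaux by mapping each pair $3\le i<j\le n$ with $j\ne i+1$, or some similar explicit family, to a distinct standard tableau with second row starting $2,\dots$. I expect this injection, i.e.\ producing enough distinct standard tableaux cleanly, to be the main obstacle.

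As a fallback, I would use the ratio identity again. Since $d(m_1-1,m_2-1)\ge 3d(m_1-2,m_2-2)$ holds whenever $m_2-1\ge5$, induction on $m_2$ reduces everything to the base case $m_2=5$. That base case is $d(m_1-1,4)$, which is at least $\binom{m_1+3}{4}$-type quantities and in any case clearly exceeds $m_1+2$ by the explicit count of Proposition~\ref{prop:formuladim}.
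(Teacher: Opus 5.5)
Your proof of the first inequality is correct and differs from the paper's. The paper treats the ratio $\frac{n(n-1)}{(n-m_2+1)m_2}$ as a function of $n\ge 2m_2$, shows it is increasing via a logarithmic derivative, and evaluates it at $n=2m_2$ to get $4-\frac{6}{m_2+1}\ge 3$. Your substitution $m_1=m_2+t$ gives $m_2(m_2-5)+t(m_2-1)+t^2\ge 0$ directly. It is purely algebraic, and it also shows that the hypothesis $m_2\ge 5$ is sharp.

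The gap is in your ``concrete plan'' for the second inequality. The bound you aim for, $d(m_1-1,m_2-1)\ge\binom{n-2}{2}-(n-2)=d(n-4,2)$, rests on the claimed monotonicity $d(a,b)\ge d(a+b-2,2)$. That claim is false: $d(3,3)=5<9=d(4,2)$ and $d(4,4)=14<20=d(6,2)$. The second failure is exactly the case $(m_1,m_2)=(5,5)$, $n=10$, which the statement covers; there your target reads $14\ge 20$. So the injection you flag as the main obstacle cannot exist, and no branching-rule induction can prove the claim.

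The needed bound $n-3$ is much weaker, and the paper gets it in one line from the hook formula:
\begin{align*}
d(m_1-1,m_2-1)&=\binom{n-2}{m_2-2}\frac{m_1-m_2+1}{m_2-1}\ge\binom{n-2}{2}\frac{m_1-m_2+1}{m_2-1}\\
&=(n-3)\cdot\frac{n-2}{2(m_2-1)}\cdot(m_1-m_2+1)\ge n-3.
\end{align*}
This uses Lemma~\ref{lemma:binomineq-1} (valid since $2\le m_2-2\le\frac{n-2}{2}$) and $n-2\ge 2(m_2-1)$. Your stronger bound ignores the factor $\frac{m_1-m_2+1}{m_2-1}$, which equals $\frac14$ at $(5,5)$; that is why it fails there.

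Your fallback is sound and should become the actual proof. Induct on $m_2$:
\begin{itemize}
\item For $m_2\ge 6$, apply the first inequality to $(m_1-1,m_2-1)$ together with the induction hypothesis $d(m_1-2,m_2-2)\ge n-5$. This gives $d(m_1-1,m_2-1)\ge 3(n-5)\ge n-3$.
\item For the base case $m_2=5$, the formula gives $d(m_1-1,4)=\frac{(m_1+3)(m_1+2)(m_1+1)(m_1-4)}{24}$. This is at least $m_1+2$ because $(m_1+3)(m_1+1)(m_1-4)\ge 48$ for $m_1\ge 5$.
\end{itemize}
Note that $d(m_1-1,4)$ is \emph{smaller} than $\binom{m_1+3}{4}$, not ``at least'' it. Replace that phrase with the explicit computation above.
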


\proof
Let $n=m_1+m_2$ and $m_1 \ge m_2\ge 5$.  
By (\ref{eq:dimSm1m2}), 
\begin{align*}
\dfrac{d(m_1, m_2)}{d(m_1-1, m_2-1)} = \dfrac{(m_1+m_2)(m_1+m_2-1)}{(m_1+1)m_2} 
= \dfrac{n(n-1)}{(n-m_2+1)m_2}.  
\end{align*} 
For $x=n\ge 2m_2$, we define 
\begin{align*}
g(x) = \dfrac{x(x-1)}{(x-m_2+1)m_2}.
\end{align*} 
Since 
\begin{align*}
(\log g(x))' = \dfrac{g'(x)}{g(x)} = \dfrac{1}{x}+\dfrac{1}{x-1}-\dfrac{1}{x-m_2+1} 
= \dfrac{x^2-2(m_2-1)x+(m_2-1)}{x(x-1)(x-m_2+1)},  
\end{align*} 
$g'(x)>0$ for $x\ge 2m_2$. 
For $x \ge 2m_2$, we have 
\begin{align*}
g(x) \ge g(2m_2) = \dfrac{2m_2(2m_2-1)}{(2m_2-m_2+1)m_2} = \dfrac{4m_2-2}{m_2+1} 
=4 -\dfrac{6}{m_2+1} \ge 4 -\dfrac{6}{5+1} = 3
\end{align*}
for $m_2 \ge 5$, which implies that $d(m_1, m_2) \ge 3 d(m_1-1, m_2-1)$.  
Using (\ref{eq:dimSm1m2}) again, we obtain  
\begin{align*}
d(m_1-1, m_2-1) &= \dfrac{(m_1+m_2-2)!(m_1-m_2+1)}{m_1!(m_2-1)!}  \\ 
&= \binom{m_1+m_2-2}{m_2-2}\cdot\dfrac{m_1-m_2+1}{m_2-1} \\ 
&\ge \binom{m_1+m_2-2}{2}\cdot\dfrac{m_1-m_2+1}{m_2-1} \\ 
&= (m_1+m_2-3)\cdot \dfrac{(m_1-1+m_2-1)}{2(m_2-1)}\cdot(m_1-m_2+1) \\ 
&\ge m_1+m_2-3
\end{align*} 
for $m_1\ge m_2\ge 5$ by Lemma~\ref{lemma:binomineq-1}. This completes the proof. 
\qed 

\begin{proposition}
For $m_1 \ge m_2 \ge 5$, then $\sharp Z_{(m_1, m_2)} \ge \left[\dfrac{n-1}{2}\right]$. 
In particular, $V_{(m_1, m_2)}$ is not thick. 
\end{proposition}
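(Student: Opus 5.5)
The plan is to combine the two inequalities of Proposition~\ref{prop:inequalityofg} with the decomposition
\[
Y^{\rm st}_{(m_1, m_2)} = T'_{1,2} \coprod T'_{1,3} \coprod Z_{(m_1, m_2)}
\]
from Definition~\ref{def:standardwrtS}. Since $T'_{1,2}$ and $T'_{1,3}$ are disjoint subsets of $Y^{\rm st}_{(m_1, m_2)}$, each of cardinality $d(m_1-1, m_2-1)$, and since $\sharp Y^{\rm st}_{(m_1, m_2)} = d(m_1, m_2)$ by (\ref{eq:dimSm1m2}), we obtain the exact count
\[
\sharp Z_{(m_1, m_2)} = d(m_1, m_2) - 2d(m_1-1, m_2-1).
\]

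Next, I invoke the first inequality of Proposition~\ref{prop:inequalityofg}, namely $d(m_1, m_2) \ge 3d(m_1-1, m_2-1)$, valid for $m_1 \ge m_2 \ge 5$. This gives
\[
\sharp Z_{(m_1, m_2)} \ge d(m_1-1, m_2-1).
\]
The second inequality of the same proposition then yields
\[
d(m_1-1, m_2-1) \ge m_1+m_2-3 = n-3.
\]
It remains to check the elementary bound $n-3 \ge \left[\dfrac{n-1}{2}\right]$. Here $n = m_1+m_2 \ge 10$, so $n-3 \ge \dfrac{n-1}{2}$ is equivalent to $n \ge 5$, which certainly holds. Chaining these gives (\ref{ineq:Zgen-1/2}).

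Finally, Proposition~\ref{prop:usefulpropnotthick} applies: its hypotheses $m_1 \ge m_2 \ge 3$ and $n \ge 7$ are satisfied. It shows that $V_{(m_1, m_2)}$ is not thick, which settles Proposition~\ref{prop:main5} for these $\lambda$.

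I do not expect any real obstacle, since all the analytic work was already done in Proposition~\ref{prop:inequalityofg}. The only point needing care is the disjointness and the count $\sharp T'_{1,2} = \sharp T'_{1,3} = d(m_1-1, m_2-1)$. This is stated in Definition~\ref{def:standardwrtS}. Disjointness holds because the entry in position $(2,1)$ is $2$ in every tableau of $T'_{1,2}$ and $3$ in every tableau of $T'_{1,3}$. Standardness of the tableaux in $T'_{1,3}$ uses that $2$ must sit in position $(1,2)$, which is forced because it is the smallest remaining entry.
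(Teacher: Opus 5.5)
Your proposal is correct and follows the paper's own proof: write $\sharp Z_{(m_1, m_2)} = d(m_1, m_2) - 2d(m_1-1, m_2-1)$, use both inequalities of Proposition~\ref{prop:inequalityofg} to get $\sharp Z_{(m_1, m_2)} \ge d(m_1-1, m_2-1) \ge n-3 \ge \left[\frac{n-1}{2}\right]$, and then apply Proposition~\ref{prop:usefulpropnotthick}. Your justification of the disjointness of $T'_{1,2}$ and $T'_{1,3}$, and of the standardness of the tableaux in $T'_{1,3}$, is also correct; the paper only asserts these facts in Definition~\ref{def:standardwrtS}.
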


\proof
Note that $\sharp Z_{(m_1, m_2)} = d(m_1, m_2) - 2d(m_1-1, m_2-1)$. By Proposition~\ref{prop:inequalityofg}, 
\begin{align*}
\sharp Z_{(m_1, m_2)} &\ge 3d(m_1-1, m_2-1) - 2d(m_1-1, m_2-1) \\
 & = d(m_1-1, m_2-1) \\ 
 & \ge m_1+m_2-3 \\
  & \ge \left[\dfrac{m_1+m_2-1}{2}\right] = \left[\dfrac{n-1}{2}\right]. 
\end{align*}
Proposition~\ref{prop:usefulpropnotthick} implies that $V_{(m_1, m_2)}$ is not thick.  
\qed 

\bigskip 

In the case $\lambda=(m_1, m_2)$ with $m_1\ge m_2 \ge 3$ with $n=m_1+m_2\ge 7$, 
we have verified that $\sharp Z_{(m_1, m_2)} \ge \left[\dfrac{n-1}{2}\right]$. 
By Proposition~\ref{prop:usefulpropnotthick}, we have: 

\begin{proposition}
For $\lambda=(m_1, m_2)$ with $m_1\ge m_2 \ge 3$ with $n=m_1+m_2\ge 7$, 
$V_{\lambda}$ is not $(d(m_1-1, m_2-1), d(m_1-1, m_2-1)+\left[\dfrac{n-1}{2}\right])$-thick. 
In particular, $V_{\lambda}$ is not $m$-thick for any $d(m_1-1, m_2-1) \le m \le 
d(m_1, m_2)-d(m_1-1, m_2-1)-\left[\dfrac{n-1}{2}\right]$. 
\end{proposition}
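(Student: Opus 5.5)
The statement is essentially a packaging of what has already been established, so I would assemble it from earlier results rather than construct anything new. Put $d_1 := d(m_1-1, m_2-1)$ and $r := \left[\dfrac{n-1}{2}\right]$. By the first part of Proposition~\ref{prop:usefulpropnotthick}, with $W_1 = M_{1,2}$ and $W_2$ as in Definition~\ref{def:m1m2caseW2}, we have $\sigma(W_1)\cap W_2 \neq 0$ for every $\sigma \in S_n$. Here $\dim W_1 = d_1$ and $\dim W_2 \le d_1 + r$.

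To obtain exactly the pair $(d_1, d_1+r)$, enlarge $W_2$ by arbitrary vectors to a subspace $W_2'$ of dimension exactly $d_1+r$. This is possible once we know $2d_1 + r \le d(m_1,m_2)$, and enlarging $W_2$ preserves the nonempty-intersection property. Alternatively, apply Remark~\ref{rem:thickdiagram}: non-$(d_1, \dim W_2)$-thickness with $\dim W_2 \le d_1 + r$ propagates upward to non-$(d_1, d_1+r)$-thickness. This gives the first assertion.

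The needed inequality $2d_1 + r \le d(m_1, m_2)$ is equivalent to $\sharp Z_{(m_1,m_2)} \ge r$, because $d(m_1,m_2) = 2d_1 + \sharp Z_{(m_1,m_2)}$. This has been verified in every subcase:
\begin{itemize}
\item $m_2 = 3$ with $m_1 > 3$;
\item $m_2 = 4$;
\item $m_2 \ge 5$, via Proposition~\ref{prop:inequalityofg}.
\end{itemize}
The remaining case $(m_1,m_2) = (3,3)$ is excluded, since then $n = 6 < 7$.

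For the second assertion, apply Proposition~\ref{prop:ijthick} with $i = d_1$ and $j = d_1 + r$. The hypothesis $i + j \le \dim V_\lambda = d(m_1,m_2)$ is the inequality just established. Since $\min\{i,j\} = d_1$ and $\max\{i,j\} = d_1 + r$, the first range in Proposition~\ref{prop:ijthick} gives non-$m$-thickness for
\[
d_1 \le m \le d(m_1,m_2) - d_1 - r ,
\]
which is precisely the stated interval.

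The only point needing care is this dimension bookkeeping: checking that $i + j \le \dim V_\lambda$ holds uniformly in all subcases and that the correct branch of Proposition~\ref{prop:ijthick} is used. Both reduce to the inequalities $\sharp Z_{(m_1,m_2)} \ge r$ already proved.
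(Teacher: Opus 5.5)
Your proposal is correct and follows the paper's own route. The first assertion comes from the $W_1=M_{1,2}$, $W_2$ construction of Definition~\ref{def:m1m2caseW2} combined with the subcase verifications of $\sharp Z_{(m_1,m_2)}\ge \left[\dfrac{n-1}{2}\right]$, and the second comes from Proposition~\ref{prop:ijthick}, whose hypothesis $i+j\le d(m_1,m_2)$ follows from $d(m_1,m_2)=2d(m_1-1,m_2-1)+\sharp Z_{(m_1,m_2)}$. Your explicit treatment of the possibility $\dim W_2<d(m_1-1,m_2-1)+\left[\dfrac{n-1}{2}\right]$, by enlarging $W_2$ or invoking Remark~\ref{rem:thickdiagram}, fills in a step the paper leaves implicit.
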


\proof 
The latter part follows from Proposition~\ref{prop:ijthick}.  
\qed 

\bigskip 

Summarizing the discussion above, 
Proposition~\ref{prop:main5} has been proved in Case~1. 

\section{The case $\lambda = (m_1, m_2, m_3)$ with $m_1 \ge m_2 \ge m_3 \ge 1$}\label{section:s=3}  

In this section, we prove Proposition~\ref{prop:main5} in Case~2; namely, 
that $V_{(m_1, m_2, m_3)}$ is not thick if 
it is not contained in the list of Theorem~\ref{th:main3}. 
More precisely, if $\lambda=(m_1, m_2, m_3)$ with 
$m_1\ge m_2\ge m_3 \ge 2$, excluding $\lambda=(2, 2, 2)$, or 
$\lambda=(m_1, m_2, 1)$ with $m_1 \ge m_2 \ge 1$, excluding 
$\lambda=(1, 1, 1)$ and $(2, 1, 1)$, then  
$V_{\lambda}$ is not thick. 
As mentioned in Section~\ref{section:thickrepresentations},  
$n=m_1+m_2+m_3$.  

For $n\ge 5$, $V_{(n-2, 1, 1)}$ is not $3$-thick by Proposition~\ref{prop:311and411}, 
which implies that Theorem~\ref{th:main4} and Proposition~\ref{prop:main5} hold.     
Hence, to prove Proposition~\ref{prop:main5}, the subcase $\lambda=(m_1, m_2, 1)$ with $m_1 \ge m_2 \ge 1$, excluding 
$\lambda=(1, 1, 1)$ and $(2, 1, 1)$ can be reduced to the subcase 
$\lambda=(m_1, m_2, 1)$ with $m_1 \ge m_2 \ge 2$.   

In Subsection~\ref{subsection:m1m21}, we prove Proposition~\ref{prop:main5} 
in the case $\lambda=(m_1, m_2, 1)$ with $m_1 \ge m_2 \ge 2$. 
In Subsection~\ref{subsection:m1m22}, we prove Proposition~\ref{prop:main5} 
in the case $\lambda=(m_1, m_2, 2)$ with $m_1 \ge m_2 \ge 2$, excluding the case 
$\lambda=(2, 2, 2)$. 
In Subsection~\ref{subsection:m1m2m3ge3}, we prove Proposition~\ref{prop:main5} 
in the case $\lambda=(m_1, m_2, m_3)$ with $m_1 \ge m_2 \ge m_3 \ge 3$. 

\bigskip 

Before proving each case, we introduce the function $F(n)$. 
Let $X_n = \{ 1, 2, \ldots, n\}$. Let $\binom{X_n}{3}$ be the set of all $3$-element subsets of $X_n$. 
For $F \subseteq \binom{X_n}{3}$, we say that $F$ {\it covers all edges} in $X_n$ 
if for any $i\neq j \in X_n$ there exists 
$f\in F$ such that $i, j \in f$. 
Then we define 
\[
F(n) = \min\left\{ 
\begin{array}{c|c} 
\sharp F & \binom{X_n}{3} \supseteq F \mbox{  covers all edges in $X_n$}
\end{array} \right\}
\]
for $n\ge 3$. 
For example, $F(3)=1, F(4)=3, F(5)=4$. 

\begin{proposition}\label{prop:ineqofF} 
For $k \ge 2$, 
$F(2k+1)\le k^2$ and $F(2k)\le k^2-1$. 
\end{proposition}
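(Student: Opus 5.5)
We give an explicit covering family, first for odd $n=2k+1$, and then deduce the even case from it.

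For $n=2k+1$, write $X_n=\{c\}\cup\{a_1,b_1,\ldots,a_k,b_k\}$, so the points of $X_n\setminus\{c\}$ form $k$ pairs $P_i=\{a_i,b_i\}$. We take $F$ to consist of two kinds of triples.
\begin{enumerate}
\item The $k$ triples $\{c,a_i,b_i\}$ for $1\le i\le k$. These cover every edge through $c$ and every edge inside a pair $P_i$.
\item For each unordered pair $\{i,j\}$ with $i<j$, two triples that cover the four edges between $P_i$ and $P_j$. The choice $\{a_i,a_j,b_j\}$ and $\{b_i,a_j,b_j\}$ does this, since each point of $P_i$ is joined to both points of $P_j$.
\end{enumerate}
This gives $\sharp F=k+2\binom{k}{2}=k+k(k-1)=k^2$, and every edge of $X_n$ is covered by construction. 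Hence $F(2k+1)\le k^2$. A sanity check: $k=2$ gives $F(5)\le 4$, which agrees with the stated value $F(5)=4$.

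For $n=2k$, we delete the point $c$ from the odd construction. The points of $X_{2k}$ are then exactly $P_1,\ldots,P_k$.
\begin{enumerate}
\item The triples of the second kind contain no $c$, so they remain and still cover all edges between distinct pairs. There are $k(k-1)$ of them.
\item The triples of the first kind are no longer available, so we must cover the $k$ edges $a_ib_i$ some other way. Each second-kind triple $\{a_i,a_j,b_j\}$ already contains the edge $a_jb_j$. So every pair $P_j$ that appears as the ``second'' pair in at least one triple has its inner edge covered for free.
\item Using the orientation $i<j$ above, every $P_j$ with $j\ge 2$ is covered. Only $P_1$ is left, and we fix it by adding the single triple $\{a_1,b_1,x\}$ for any other point $x$.
\end{enumerate}
This gives $\sharp F\le k(k-1)+1$. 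For $k\ge 2$ we have $k^2-k+1\le k^2-1$, so $F(2k)\le k^2-1$. A sanity check: $k=2$ gives $3$, matching $F(4)=3$.

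Alternatively, one can avoid the extra triple by re-orienting the pair-of-pairs triples so that every $P_i$ occurs as the ``doubled'' pair at least once, which needs $k\ge 2$. Either route gives the bound.

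The only real care needed is in the even case: we must check that every inner edge $a_ib_i$ is still covered once $c$ is removed. Beyond that, the argument is direct counting.
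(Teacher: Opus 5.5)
Your proof is correct, but it follows a different route from the paper's.

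\textbf{Comparison with the paper.} The paper argues by induction on $k$, starting from $F(4)=3$ and $F(5)=4$. To pass from $X_{2k+1}$ to $X_{2k+3}$ it adds the $k+1$ triples $\{2k+3,1,2\},\ldots,\{2k+3,2k+1,2k+2\}$ and the $k$ triples $\{2k+2,1,2\},\ldots,\{2k+2,2k-1,2k\}$. The even step from $X_{2k}$ to $X_{2k+2}$ is analogous: $k$ triples through $2k+2$, $k$ triples through $2k+1$, and the extra triple $\{2k+2,2k+1,2k\}$. Each step costs $2k+1$ triples, and this carries the bounds $k^2$ and $k^2-1$ forward.

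You instead write down a single closed-form covering family for a fixed pairing $P_1,\ldots,P_k$. The triples $\{c,a_i,b_i\}$ cover the edges through $c$ and the edges inside pairs, and two triples per pair of pairs cover the cross edges. This gives a self-contained, non-recursive description with no base case. In the even case it also gives the sharper bound $F(2k)\le k^2-k+1$, which beats $k^2-1$ for $k\ge 3$. Only the weaker bound is needed later, in Corollary~\ref{cor:ineqofF}. The paper's induction handles both parities with the same ``add two new points'' step, at the cost of shifting the pairing from step to step and relying on the small base values.

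\textbf{A correction to your closing remark.} The re-orientation alternative needs $k\ge 3$, not $k\ge 2$. For $k=2$ there is only one pair of pairs, so only one of $P_1,P_2$ can be the ``doubled'' pair. Indeed, two triples can never cover all six edges of a $4$-set, since they share at least one edge. Your main even-case argument, which adds the triple $\{a_1,b_1,x\}$, does handle $k=2$ correctly, so the proof stands. You should just drop the claim that ``either route'' works for every $k\ge 2$.
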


\proof
If $k=2$, then the statement is true.  Let us prove the proposition by induction on $k$. 
Assume that $F \subseteq \binom{X_{2k+1}}{3}$ covers all edges in $X_{2k+1}=\{ 1, 2, \ldots, 2k+1 \}$ and 
$\sharp F \le k^2$. 
Then the union of $F$ and the set of $2k+1$ elements 
$\{ 2k+3, 1, 2\}, \{ 2k+3, 3, 4\}, \ldots \{ 2k+3, 2k+1, 2k+2\}, \{ 2k+2, 1, 2\}, \{ 2k+2, 3, 4\}, \ldots, \{ 2k+2, 2k-1, 2k\}$  
covers all edges in $X_{2k+3}=\{ 1, 2, \ldots, 2k+3 \}$. Thereby, we have $F(2k+3) \le k^2+2k+1 =(k+1)^2$. 

Assume that $F \subseteq \binom{X_{2k}}{3}$ covers all edges in $X_{2k}=\{ 1, 2, \ldots, 2k \}$ and $\sharp F \le k^2-1$. 
Then the union of $F$ and the set of $2k+1$ elements 
$\{ 2k+2, 1, 2\}, \{ 2k+2, 3, 4\}, \ldots, \{ 2k+2, 2k-1, 2k\}, 
\{ 2k+2, 2k+1, 2k\}, \{ 2k+1, 1, 2\}, \{ 2k+1, 3, 4\}, \ldots, \{ 2k+1, 2k-1, 2k\}$    
covers all edges in $X_{2k+2}=\{ 1, 2, \ldots, 2k+2 \}$. Thereby, we have $F(2k+2) \le k^2-1+2k+1 =(k+1)^2-1$. 
Hence, we have proved the statement. 
\qed 

\begin{corollary}\label{cor:ineqofF}
For $n\ge 3$, $F(n) \le \dfrac{n^2-4}{4}$. 
\end{corollary}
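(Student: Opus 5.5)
The plan is to deduce the corollary directly from Proposition~\ref{prop:ineqofF} by splitting on the parity of $n$. The small values $n=3$ and $n=4$ are not covered by that proposition (which needs $k\ge 2$), so I will check them by hand.

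\emph{Odd $n$.} Write $n=2k+1$ with $k\ge 1$.
\begin{itemize}
\item If $k\ge 2$, Proposition~\ref{prop:ineqofF} gives $F(n)\le k^2$. Since
\[
\frac{n^2-4}{4}=\frac{4k^2+4k-3}{4}=k^2+k-\frac34,
\]
and $k-\frac34>0$, the bound $F(n)\le k^2$ is strictly stronger than needed.
\item If $k=1$, then $n=3$, and we have $F(3)=1\le \frac54$.
\end{itemize}

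\emph{Even $n$.} Write $n=2k$ with $k\ge 2$.
\begin{itemize}
\item Proposition~\ref{prop:ineqofF} gives $F(n)\le k^2-1$.
\item On the other hand, $\frac{n^2-4}{4}=k^2-1$ exactly, so the inequality holds with equality in the bound.
\item The case $n=4$ is the instance $k=2$, so it is already included here; it also agrees with the stated value $F(4)=3$.
\end{itemize}

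There is no real obstacle in this argument. The only points to watch are that the odd case $n=3$ lies outside the range of Proposition~\ref{prop:ineqofF} and must be handled with the value $F(3)=1$, and that the algebraic comparison $k^2\le k^2+k-\frac34$ uses $k\ge 1$.
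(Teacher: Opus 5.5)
Your proof is correct and is essentially the paper's own argument: handle $n=3$ directly via $F(3)=1$, then for odd $n\ge 5$ and even $n\ge 4$ apply Proposition~\ref{prop:ineqofF} and compare $k^2$ (resp.\ $k^2-1$) with $\frac{n^2-4}{4}$. One small slip: your opening remark that $n=4$ lies outside the proposition's range is wrong, since $n=4$ is the case $k=2$ (as you yourself note later).
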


\proof
If $n=3$, then $F(3)=1 \le \dfrac{3^2-4}{4}$. 
If $n$ is odd with $n\ge 5$, then $F(n)\le \left(\dfrac{n-1}{2}\right)^2 \le \dfrac{n^2-4}{4}$ 
by Proposition~\ref{prop:ineqofF}. 
If $n$ is even with $n\ge 4$, then $F(n)\le \left(\dfrac{n}{2}\right)^2 -1 = \dfrac{n^2-4}{4}$ 
by Proposition~\ref{prop:ineqofF}.  This completes the proof. 
\qed

\subsection{The case $\lambda = (m_1, m_2, 1)$ with $m_1\ge m_2\ge 2$}\label{subsection:m1m21}  
In this subsection, we consider the subcase 
$\lambda=(m_1, m_2, 1)$ with $m_1 \ge m_2 \ge 2$. In this subcase, $n=m_1+m_2+1\ge 5$. 
%
%
By Proposition~\ref{prop:formuladim}, we have 
\[
d(m_1, m_2, 1) = \dim_{{\Bbb K}} V_{(m_1, m_2, 1)} = \binom{m_1+m_2+1}{m_2-1} \cdot \dfrac{m_1+1}{m_2+1}\cdot (m_1-m_2+1). 
\]
Let 
\[ 
W_1 = \left\langle 
\begin{array}{c|c} {\small\begin{ytableau}
        1 & \cdots   \\
        2 & \cdots \\
        i  
\end{ytableau}} \in Y_{(m_1, m_2, 1)} & i=3, 4, \ldots, n \end{array}  
\right\rangle.
\]   
We see that 
\begin{align*} 
\dim_{{\Bbb K}} W_1 & =(n-2) \sharp Y^{\rm st}_{(m_1-1, m_2-1)} = (n-2)\dim_{{\Bbb K}} V_{(m_1-1, m_2-1)} \\
& =(m_1+m_2-1)\binom{m_1+m_2-2}{m_2-1}\dfrac{m_1-m_2+1}{m_1}. 
\end{align*} 
Setting 
$g(m_1, m_2) = \dfrac{d(m_1, m_2, 1)}{\dim_{{\Bbb K}} W_1}$, we have  
\begin{align*}
g(m_1, m_2) 
 & = \dfrac{(m_1+m_2+1)!}{(m_2-1)!(m_1+2)!}\cdot \dfrac{m_1+1}{m_2+1} \cdot  \dfrac{m_1(m_2-1)!(m_1-1)!}{(m_1+m_2-2)!(m_1+m_2-1)} \\
 & = \dfrac{(m_1+m_2+1)(m_1+m_2)}{(m_1+2)(m_2+1)}. 
\end{align*} 

Putting $m_1=m_2+k$ with $k\ge 0$, we obtain 
\begin{align*}
g(m_1, m_2) & = g(m_2+k, m_2) = \dfrac{(2m_2+k+1)(2m_2+k)}{(m_2+k+2)(m_2+1)}. 
\end{align*}  

\begin{lemma}\label{lemma:ineqofg}
Let $k\ge 0$ and $m_2\ge 2$. Then $g(m_1, m_2)\ge 2$ if and only if either 
$k\ge 1$ and $m_2\ge 2$, or $k=0$ and $m_2\ge 3$. 
\end{lemma}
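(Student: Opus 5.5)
We clear denominators: since $(m_2+k+2)(m_2+1)>0$, the condition $g(m_2+k,m_2)\ge 2$ is equivalent to the polynomial inequality
\[
(2m_2+k+1)(2m_2+k) \ge 2(m_2+k+2)(m_2+1).
\]
We expand both sides and subtract. The left side is $4m_2^2+4m_2k+2m_2+k^2+k$, and the right side is $2m_2^2+2m_2k+6m_2+2k+4$. So the condition becomes
\[
h(m_2,k) := 2m_2^2+2m_2k+k^2-4m_2-k-4 \ge 0.
\]
The plan is to analyse the sign of $h$ directly on the region $m_2\ge 2$, $k\ge 0$.

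First, the case $k=0$. Here $h(m_2,0)=2m_2^2-4m_2-4=2(m_2^2-2m_2-2)$. At $m_2=2$ this equals $-4<0$, so $g<2$; indeed $g(2,2)=20/12<2$. At $m_2=3$ it equals $2>0$. Moreover $m_2^2-2m_2-2$ is increasing for $m_2\ge 1$, so $h(m_2,0)>0$ for all $m_2\ge 3$. This gives exactly the claimed equivalence when $k=0$.

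Second, the case $k\ge 1$, where we must show $h\ge 0$ for every $m_2\ge 2$. Write
\[
h = 2m_2(m_2-2) + k(2m_2+k-1) - 4.
\]
For $m_2\ge 2$ the first term is nonnegative. For $k\ge 1$ the second term is at least $2m_2 \ge 4$. Hence $h\ge 0$, and in fact $h(2,1)=0$, which gives $g(3,2)=2$.

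Combining the two cases gives the statement: $g\ge 2$ holds for all $k\ge 1$ with $m_2\ge 2$, and for $k=0$ it holds if and only if $m_2\ge 3$. There is no real obstacle here; the only care needed is at the boundary case $(k,m_2)=(1,2)$, where equality holds, and in handling the non-strict inequality correctly.
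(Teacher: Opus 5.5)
Your proof is correct and follows the paper's route: clearing the positive denominator reduces $g\ge 2$ to $2m_2^2+2m_2k+k^2-4m_2-k-4\ge 0$, which is exactly twice the paper's quadratic $m_2^2+(k-2)m_2+\frac{k^2-k-4}{2}\ge 0$. The paper completes the square to $\left(m_2+\frac{k-2}{2}\right)^2+\frac{k^2+2k-12}{4}\ge 0$ and calls the rest straightforward; your regrouping $2m_2(m_2-2)+k(2m_2+k-1)-4$ carries out that check explicitly, including the equality case $(k,m_2)=(1,2)$ and the single failing case $(k,m_2)=(0,2)$.
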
 

\proof 
By direct calculation, 
\begin{align*}
\dfrac{(2m_2+k+1)(2m_2+k)}{(m_2+k+2)(m_2+1)} \ge 2 
&\Longleftrightarrow m_2^2+(k-2)m_2+\dfrac{k^2-k-4}{2} \ge 0 \\
&\Longleftrightarrow \left(m_2+\dfrac{k-2}{2}\right)^2 +\dfrac{k^2+2k-12}{4} \ge 0. 
\end{align*}
It is straightforward to verify the statement. 
\qed 

\begin{proposition}\label{prop:dimW1geFn}
For $n=m_1+m_2+1 = 2m_2+k+1$, 
we have $\dim_{{\Bbb K}} W_1 \ge F(n)$ if either $k\ge 1$ and $m_2\ge 2$, or 
$k=0$ and $m_2\ge 3$.  
\end{proposition}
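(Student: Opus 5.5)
Since Corollary~\ref{cor:ineqofF} gives $F(n)\le \dfrac{n^2-4}{4}$, it suffices to show $\dim_{{\Bbb K}} W_1 \ge \dfrac{n^2-4}{4}$ under the stated hypotheses.

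We have $\dim_{{\Bbb K}} W_1=(n-2)\, d(m_1-1,m_2-1)$. The plan is to bound $d(m_1-1,m_2-1)$ from below by roughly $n/4$, with a separate treatment of the small cases.

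\emph{Generic case $m_2\ge 3$.} A standard tableau of shape $(m_1-1,m_2-1)$ is determined by its second row. The tableaux whose first row is $1,2,\ldots,m_2-2$ followed by one element $j$, with the remaining entries of $\{m_2-1,\ldots,n-2\}$ filled in increasingly, are standard for every choice of $j$. This gives at least $m_1-m_2+2 = k+2$ such tableaux. Another family varies the last entry of the second row. Combining these, one aims for $d(m_1-1,m_2-1)\ge m_1-1 = m_2+k-1$.

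It is cleaner to use the explicit formula
\[
d(m_1-1,m_2-1)=\binom{m_1+m_2-2}{m_2-1}\dfrac{m_1-m_2+1}{m_1}.
\]
For $m_2\ge 3$ we have $m_2-1\ge 2$, so Lemma~\ref{lemma:binomineq-1} gives a lower bound of $\binom{n-3}{2}\cdot\dfrac{k+1}{m_1}$. Since $n-3\ge m_1$ and $k+1\ge 1$, this is at least $\dfrac{(n-3)(n-4)}{2m_1}\ge \dfrac{n-4}{2}\cdot\dfrac{n-3}{n-4}$, which is about $n/2$. Then
\[
\dim_{{\Bbb K}} W_1 \ge (n-2)\dfrac{n-3}{2} \ge \dfrac{n^2-4}{4},
\]
which holds when $2(n-3)\ge n+2$, that is, $n\ge 8$. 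The remaining cases with $n\le 7$ and $m_2\ge 3$ are $(3,3,1)$ ($n=7$) and are checked directly: $d(2,2)=2$ gives $\dim W_1=10\ge 45/4$ is false. I would then recheck this case using $F(7)=7$ (a Fano plane covers all edges), so $10\ge 7$ holds. Small cases should therefore use exact values of $F(n)$ rather than the corollary.

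\emph{Case $m_2=2$, $k\ge 1$.} Here $d(m_1-1,1)=m_1-1=n-4$, so $\dim_{{\Bbb K}} W_1=(n-2)(n-4)$, with $n\ge 6$. The inequality $(n-2)(n-4)\ge (n^2-4)/4$ reduces to $3n^2-24n+36\ge 0$, i.e.\ $n^2-8n+12\ge0$, which holds for $n\ge 6$.

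The main obstacle is the bookkeeping of the binomial lower bound near the boundary, i.e.\ small $n$. In those cases I would compute $\dim W_1$ exactly from the formula and compare it with the exact small values of $F(n)$: $F(6)\le 6$ and $F(7)=7$, and Proposition~\ref{prop:ineqofF} gives $F(8)\le 15$. Note that $(3,3,1)$ and $(4,3,1)$ with $d(3,2)=5$ give $\dim W_1=30\ge 15$, so they are fine.
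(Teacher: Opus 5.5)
Your argument is correct. Its skeleton matches the paper's: both reduce via Corollary~\ref{cor:ineqofF} to $\dim_{\Bbb K} W_1\ge \frac{n^2-4}{4}$. Both also treat $(3,3,1)$ by hand, since there $\dim_{\Bbb K}W_1=10<\frac{45}{4}$. The paper uses $F(7)\le 9$ from Proposition~\ref{prop:ineqofF}; your Fano-plane covering giving $F(7)\le 7$ works just as well.

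The difference is in how you verify the main inequality. The paper writes $\underline{h}(m_2,k)=4\dim_{\Bbb K}W_1/(n^2-4)$ in closed form and checks $\underline{h}(4,0)=20/11$ and $\underline{h}(2,k)=4(k+1)/(k+7)\ge 1$. It then proves $\underline{h}(m_2+1,k)\ge \underline{h}(m_2,k)$. So it splits on $k=0$ versus $k\ge 1$ and argues by monotonicity in $m_2$.

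You split instead on $m_2=2$ versus $m_2\ge 3$. Your $m_2=2$ computation is the paper's base case $\underline{h}(2,k)\ge 1$ rewritten with $n=k+5$. For $m_2\ge 3$ you bypass the ratio computation with the single estimate $\binom{n-3}{m_2-1}\ge\binom{n-3}{2}$ from Lemma~\ref{lemma:binomineq-1}. This gives $\dim_{\Bbb K}W_1\ge (n-2)(n-3)/2$ directly. Your route is shorter and avoids the polynomial inequalities in the paper's ratio step. The paper's route additionally shows that $\underline{h}$ increases with $m_2$, which is not needed here.

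Two small repairs are needed. First, the step $\frac{(n-3)(n-4)}{2m_1}\ge\frac{n-3}{2}$ requires $m_1\le n-4$, which is exactly the condition $m_2\ge 3$ you are assuming. The reason you cite, $m_1\le n-3$, only yields $\frac{n-4}{2}$, and that would push your threshold to $n\ge 10$. Second, your closing remark mixes cases. $(3,3,1)$ has $\dim_{\Bbb K}W_1=10$ and is handled by $F(7)\le 7$. The values $d(3,2)=5$ and $\dim_{\Bbb K}W_1=30$ belong to $(4,3,1)$, which your $n\ge 8$ bound already covers.
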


\proof
When $k=0$ and $m_2=3$, we can verify that $n=7$ and $\dim_{{\Bbb K}} W_1=10 \ge 3^2 \ge F(7)$ by 
Proposition~\ref{prop:ineqofF}.  Thereby, let us show the statement if $k\ge 1$ and $m_2\ge 2$, or if 
$k=0$ and $m_2\ge 4$.  
For $m_1=m_2+k$ with $k\ge 0$, set 
\[
h(m_2, k) =\dfrac{\dim_{{\Bbb K}} W_1}{F(n)}  
\]
and 
\[
\underline{h}(m_2, k) =\dfrac{\dim_{{\Bbb K}} W_1}{\frac{n^2-4}{4}} = \dfrac{4\dim_{{\Bbb K}} W_1}{n^2-4}.      
\]
By Corollary~\ref{cor:ineqofF}, $h(m_2, k) \ge \underline{h}(m_2, k)$. 
It suffices to show that $\underline{h}(m_2, k)\ge 1$ if $k\ge 1$ and $m_2\ge 2$, or if 
$k=0$ and $m_2\ge 4$. 
By direct calculation, 
\begin{align}
\underline{h}(m_2, k) &= 4(m_1+m_2-1)\cdot \binom{m_1+m_2-2}{m_2-1}\cdot \dfrac{m_1-m_2+1}{m_1}\cdot \dfrac{1}{n^2-4} \nonumber \\ 
& = 4(2m_2+k-1)\cdot \dfrac{(2m_2+k-2)!}{(m_2-1)!(m_2+k-1)!} \cdot \dfrac{k+1}{m_2+k}\cdot \dfrac{1}{(2m_2+k+1)^2-4} \nonumber \\
& = 4\cdot \dfrac{(2m_2+k-2)!}{(m_2-1)!(m_2+k)!} \cdot \dfrac{k+1}{2m_2+k+3}.   
\label{eq:dimW1Fn} 
\end{align} 

When $k=0$, we have 
\begin{align*}
\underline{h}(m_2, 0) &= \dfrac{4 (2m_2-2)!}{(m_2-1)!m_2!(2m_2+3)}.  
\end{align*}
Since $\underline{h}(4, 0)=\dfrac{4\cdot 6!}{3!4!\cdot 11}=\dfrac{20}{11}>1$ and  
\begin{align*}
\dfrac{\underline{h}(m_2+1, 0)}{\underline{h}(m_2, 0)} 
& =  \dfrac{4 (2m_2)!}{m_2!(m_2+1)!(2m_2+5)}\cdot 
 \dfrac{(m_2-1)!m_2!(2m_2+3)}{4(2m_2-2)!}  \\
 & = \dfrac{2(2m_2-1)(2m_2+3)}{(m_2+1)(2m_2+5)} \ge 1 
\end{align*}
for $m_2\ge 4$, we see that $\underline{h}(m_2, k)\ge 1$ for $k=0$ and $m_2\ge 4$. 

Let us consider the case $k\ge 1$. By (\ref{eq:dimW1Fn}), 
\begin{align*}
\underline{h}(2, k) &= 4\cdot \dfrac{(k+2)!}{1!(k+2)!} \cdot \dfrac{k+1}{k+7} \\ 
 & = \dfrac{4(k+1)}{k+7} \ge 1. 
\end{align*}
Using (\ref{eq:dimW1Fn}) again, we see that 
\begin{align*}
& \dfrac{\underline{h}(m_2+1, k)}{\underline{h}(m_2, k)} \\
& =  \dfrac{(2m_2+k)!}{m_2!(m_2+k+1)!} \cdot \dfrac{k+1}{2m_2+k+5}\cdot \dfrac{(m_2-1)!(m_2+k)!}{(2m_2+k-2)!} \cdot \dfrac{2m_2+k+3}{k+1}  \\
& = \dfrac{(2m_2+k)(2m_2+k-1)}{m_2(m_2+k+1)}\cdot \dfrac{2m_2+k+3}{2m_2+k+5} \\
& = \dfrac{2m_2+k}{m_2+k+1}\cdot \dfrac{4m_2^2+(4k+4)m_2+k^2+2k-3}{2m_2^2+(k+5)m_2} \ge 1,  
\end{align*}
since $2m_2+k \ge m_2+k+1$ and $4m_2^2+(4k+4)m_2+k^2+2k-3 \ge 2m_2^2+(k+5)m_2$ for $m_2\ge 2$ and $k\ge 1$. 
Hence, $\underline{h}(m_2, k)\ge 1$ if $k\ge 1$ and $m_2\ge 2$.  
This completes the proof. 
\qed 

\begin{corollary}\label{cor:ineqofSdimW1Fn} 
If $k\ge 1$ and $m_2\ge 2$, or if $k=0$ and $m_2\ge 3$, then 
$\dim_{{\Bbb K}} V_{(m_1, m_2, 1)} \ge \dim_{{\Bbb K}} W_1 +F(m_1+m_2+1)$.   
\end{corollary}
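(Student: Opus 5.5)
Since $g(m_1, m_2) = \dim_{{\Bbb K}} V_{(m_1, m_2, 1)}/\dim_{{\Bbb K}} W_1$, the claimed inequality is equivalent to
\[
(g(m_1, m_2)-1)\dim_{{\Bbb K}} W_1 \ge F(m_1+m_2+1).
\]
The plan is to obtain this by combining Lemma~\ref{lemma:ineqofg} with Proposition~\ref{prop:dimW1geFn}.

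First, under the stated hypotheses ($k\ge 1$ and $m_2\ge 2$, or $k=0$ and $m_2\ge 3$), Lemma~\ref{lemma:ineqofg} gives $g(m_1, m_2)\ge 2$. Hence
\[
\dim_{{\Bbb K}} V_{(m_1, m_2, 1)} = g(m_1, m_2)\dim_{{\Bbb K}} W_1 \ge 2\dim_{{\Bbb K}} W_1,
\]
that is, $\dim_{{\Bbb K}} V_{(m_1, m_2, 1)} - \dim_{{\Bbb K}} W_1 \ge \dim_{{\Bbb K}} W_1$.

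Second, the hypotheses of the corollary are exactly those of Proposition~\ref{prop:dimW1geFn}, so $\dim_{{\Bbb K}} W_1 \ge F(n)$ with $n=m_1+m_2+1$. Chaining the two inequalities gives
\[
\dim_{{\Bbb K}} V_{(m_1, m_2, 1)} \ge \dim_{{\Bbb K}} W_1 + \dim_{{\Bbb K}} W_1 \ge \dim_{{\Bbb K}} W_1 + F(n),
\]
which is the statement.

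There is no real obstacle here. The only point to check is that the hypotheses of Lemma~\ref{lemma:ineqofg} and Proposition~\ref{prop:dimW1geFn} match those of the corollary, and they do, with $m_1=m_2+k$. The substantive estimates were already made in those two results.
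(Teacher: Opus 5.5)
Your proof is correct and follows the paper's proof. The paper cites the same two results: Lemma~\ref{lemma:ineqofg} gives $\dim V_{(m_1,m_2,1)} \ge 2\dim W_1$, Proposition~\ref{prop:dimW1geFn} gives $\dim W_1 \ge F(m_1+m_2+1)$ under identical hypotheses, and chaining the two yields the corollary.
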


\proof
The statement follows from Lemma~\ref{lemma:ineqofg} and Proposition~\ref{prop:dimW1geFn}. 
\qed 

\bigskip 

Let us define a subspace $W_2$ of $V_{(m_1, m_2, 1)}$ for $m_1\ge m_2\ge 2$. 
For $n=m_1+m_2+1 \ge 5$, 
take a subset $F_n \subseteq \binom{X_n}{3}$ such that $\sharp F_n = F(n)$ and 
$F_n$ covers all edges in $X_n$. For each $f \in F_n$, choose a (not necessarily standard) Young tableau $y_f$ of form 
\[
 {\small\begin{ytableau}
   i & \cdots   \\
   j & \cdots \\
   k 
\end{ytableau}}\; ,  
\] 
where $f=\{ i, j, k \}$. 
Set $W_2 = \langle y_{f} \mid f \in F_n\rangle$. Then $\dim_{{\Bbb K}} W_2 \le F(n)$.
For $\sigma \in S_n$, choose $f \in Y_n$ such that 
$\{\sigma(1), \sigma(2)\} \subset f$. Since $y_f \in \sigma(W_1)$, 
$\sigma(W_1)\cap W_2 \neq 0$.

\begin{theorem}
If $m_1\ge m_2\ge 2$, then $V_{(m_1, m_2, 1)}$ is not thick. In particular, 
Theorem~\ref{th:main4} and Proposition~\ref{prop:main5} hold in this case. 
\end{theorem}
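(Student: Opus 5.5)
We split according to whether Corollary~\ref{cor:ineqofSdimW1Fn} applies. When $k\ge 1$ and $m_2\ge 2$, or $k=0$ and $m_2\ge 3$, we take $W_1$ and $W_2$ as constructed just before the statement.

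First we confirm that $\sigma(W_1)\cap W_2\neq 0$ for every $\sigma\in S_n$. Here $\sigma(W_1)$ is spanned by all (not necessarily standard) tableaux with $\sigma(1),\sigma(2)$ in the first column and some third entry $i$ below them. Acting by a column permutation only changes the sign of a Specht polynomial, so any tableau whose first column is $\{\sigma(1),\sigma(2),k\}$ lies in $\sigma(W_1)$ up to sign. The entries in the remaining columns may be arbitrary, since $\sigma(W_1)$ contains every such tableau and not just standard ones. Because $F_n$ covers all edges, some $f\in F_n$ contains $\{\sigma(1),\sigma(2)\}$, and then $y_f\in \sigma(W_1)\cap W_2$. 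This element is nonzero, since a Specht polynomial $f(y)$ is a nonzero product of differences.

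Second, the dimension count. We have $\dim W_2\le F(n)$, and by Corollary~\ref{cor:ineqofSdimW1Fn}, $\dim W_1+F(n)\le d(m_1,m_2,1)$. We may then enlarge $W_2$ to a subspace $W_2'$ of dimension exactly $d(m_1,m_2,1)-\dim W_1$. The intersection property is preserved under this enlargement, so $V_{(m_1,m_2,1)}$ is not $\dim W_1$-thick. Alternatively, we can invoke Proposition~\ref{prop:main5} directly and then Proposition~\ref{prop:ijthick}.

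The remaining case is $k=0$, $m_2=2$, that is, $\lambda=(2,2,1)$ with $n=5$. Here Corollary~\ref{cor:ineqofSdimW1Fn} fails: $\dim W_1=3\cdot 2=6$ and $F(5)=4$, while $d(2,2,1)=5$. Instead, ${}^t(2,2,1)=(3,2)=(n-2,2)$ with $n=5$. Proposition~\ref{prop:okuyama-omoda} shows $V_{(3,2)}$ is not $2$-thick, and Proposition~\ref{prop:dualrep} transfers this to $V_{(2,2,1)}$. Equivalently, Proposition~\ref{prop:221} with $n=5$ gives the same conclusion.

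The main obstacle is keeping track of which cases the numerical corollary actually covers. Lemma~\ref{lemma:ineqofg} excludes exactly $(m_2,k)=(2,0)$, and the conjugation argument handles that leftover case. The first step, that $y_f$ lies in $\sigma(W_1)$, needs only the sign behaviour of Specht polynomials under column permutations.
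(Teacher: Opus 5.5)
Your proof is correct and follows the paper's argument. The case $(2,2,1)$ is handled by Proposition~\ref{prop:221}, via conjugation with $(3,2)$. Every other case uses the edge-covering subspace $W_2$ together with Corollary~\ref{cor:ineqofSdimW1Fn}.

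One small slip: for $\lambda=(2,2,1)$ you have $\dim W_1=(n-2)\,d(1,1)=3$, not $6$, since $6$ would exceed $d(2,2,1)=5$. The corollary still fails there, because $3+F(5)=7>5$, so nothing in your argument changes.
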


\proof
By Proposition~\ref{prop:221}, $V_{(2, 2, 1)}$ is not thick. 
It suffices to prove the statement for $k\ge 1$ and $m_2\ge 2$, and for $k=0$ and $m_2\ge 3$. 
Let $W_1$ and $W_2$ be as above. 
Corollary~\ref{cor:ineqofSdimW1Fn} implies that 
$\dim_{{\Bbb K}} V_{(m_1, m_2, 1)} \ge \dim_{{\Bbb K}} W_1+\dim_{{\Bbb K}} W_2$.  Hence, $V_{(m_1, m_2, 1)}$ is not thick. 
\qed 

\bigskip 

By the discussion above, we have:

\begin{proposition}
In the case $\lambda=(m_1, m_2, 1)$ with $m_1\ge m_2\ge 2$, except for $\lambda=(2, 2, 1)$, 
$V_{\lambda}$ is not $((n-2)d(m_1-1, m_2-1), F(n))$-thick, where $n=m_1+m_2+1$.  
In particular, it is not $m$-thick for any $F(n) \le m \le d(m_1, m_2, 1)-(n-2)d(m_1-1, m_2-1)$. 
\end{proposition}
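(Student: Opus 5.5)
The statement has two parts, and I would prove them separately.

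\emph{The $(i,j)$-thick claim.} This repackages the construction just given. Take $W_1=\langle y\in Y_{(m_1,m_2,1)} \text{ with } 1,2 \text{ in the first column (rows 1,2) and } i \text{ in row 3}\rangle$. As computed above, $\dim_{\Bbb K} W_1=(n-2)\,d(m_1-1,m_2-1)$. Take $W_2=\langle y_f\mid f\in F_n\rangle$, so that $\dim_{\Bbb K} W_2\le F(n)$.

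For $\sigma\in S_n$, the space $\sigma(W_1)$ is spanned by the tableaux whose first column begins with $\sigma(1),\sigma(2)$. Choose $f\in F_n$ containing the edge $\{\sigma(1),\sigma(2)\}$. The tableau $y_f$ has first column $f$ in some order. Reordering that column changes the Specht polynomial only by a sign, since column permutations act by $\mathrm{sgn}$. Hence $\pm y_f\in\sigma(W_1)$, and $y_f\neq 0$ as a Specht polynomial. Therefore $\sigma(W_1)\cap W_2\neq0$ for every $\sigma$.

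So $V_\lambda$ is not $(\dim W_1,\dim W_2)$-thick. Since $\dim W_2\le F(n)$, Remark~\ref{rem:thickdiagram} (monotonicity of $T(\rho)$ under enlarging the second index) upgrades this to: $V_\lambda$ is not $((n-2)d(m_1-1,m_2-1),F(n))$-thick. Concretely, one enlarges $W_2$ to a subspace of dimension exactly $F(n)$, which is possible once the dimension bound below holds.

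\emph{The consequence about $m$-thickness.} I apply Proposition~\ref{prop:ijthick} with $i=(n-2)d(m_1-1,m_2-1)$ and $j=F(n)$. This requires $i+j\le d(m_1,m_2,1)$, which is exactly Corollary~\ref{cor:ineqofSdimW1Fn}. That corollary applies in all cases except $\lambda=(2,2,1)$: the cases $k\ge1,\ m_2\ge2$ and $k=0,\ m_2\ge3$ cover everything with $m_1\ge m_2\ge2$ other than $(2,2,1)$. We also have $j=F(n)\le i$ by Proposition~\ref{prop:dimW1geFn}, so $\min\{i,j\}=F(n)$ and $\max\{i,j\}=i$. Proposition~\ref{prop:ijthick} then gives non-$m$-thickness for $F(n)\le m\le d(m_1,m_2,1)-(n-2)d(m_1-1,m_2-1)$, which is the stated range.

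\emph{Main obstacle.} The only delicate point is verifying the hypotheses of Proposition~\ref{prop:ijthick}: the inequality $i+j\le d(m_1,m_2,1)$ and the identification of $\min\{i,j\}$. Both are already supplied by Corollary~\ref{cor:ineqofSdimW1Fn} and Proposition~\ref{prop:dimW1geFn}, so the proof is essentially bookkeeping.
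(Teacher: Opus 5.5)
Your proposal is correct and follows the paper's argument. You use the same $W_1$ and $W_2=\langle y_f\mid f\in F_n\rangle$, pass to the exact pair $((n-2)d(m_1-1,m_2-1),F(n))$ by monotonicity (Remark~\ref{rem:thickdiagram}), and apply Proposition~\ref{prop:ijthick}, with Corollary~\ref{cor:ineqofSdimW1Fn} supplying $i+j\le d(m_1,m_2,1)$. The appeal to Proposition~\ref{prop:dimW1geFn} to identify $\min\{i,j\}$ is harmless but not needed: the interval $F(n)\le m\le d(m_1,m_2,1)-(n-2)d(m_1-1,m_2-1)$ is one of the two ranges in Proposition~\ref{prop:ijthick} whichever of $i,j$ is smaller.
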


\proof 
The latter part follows from Proposition~\ref{prop:ijthick}.  
\qed

\subsection{The case $\lambda = (m_1, m_2, 2)$ with $m_1\ge m_2\ge 2$}\label{subsection:m1m22}   

In this subsection, we consider the subcase 
$\lambda=(m_1, m_2, 2)$ with $m_1 \ge m_2 \ge 2$, excluding $\lambda=(2, 2, 2)$. 
In this subcase, $n=m_1+m_2+2 \ge 7$. 


Let 
\[ 
W_1 = \left\langle 
\begin{array}{c|c} {\small\begin{ytableau}
        1 & \cdots   \\
        2 & \cdots \\
        i  & \ast 
\end{ytableau}} \in Y_{(m_1, m_2, 2)} & i=3, 4, \ldots, n \end{array}  
\right\rangle.
\]   
By the Garnir relation (Theorem~\ref{th:Garnir's relation}), we see that  
\[ 
W_1 = \left\langle 
\begin{array}{c|c} {\small\begin{ytableau}
        1 & \cdots   \\
        2 & \cdots \\
        i  & \ast 
\end{ytableau}} \in Y_{(m_1, m_2, 2)} & i=3, 4, \ldots, n-1 \end{array}  
\right\rangle. 
\]   
Hence, we obtain 
\begin{align*}
\dim_{{\Bbb K}} W_1 & \le (n-3)\sharp Y^{\rm st}_{(m_1-1, m_2-1, 1)} = (n-3)\dim_{{\Bbb K}} V_{(m_1-1, m_2-1, 1)} \\
 & =(m_1+m_2-1)\cdot \dfrac{(m_1+m_2-1)!(m_1-m_2+1)m_1(m_2-1)}{(m_1+1)!m_2!}. 
\end{align*} 

Let us define a subspace $W_2$ of $V_{(m_1, m_2, 2)}$ for $m_1\ge m_2\ge 2$. 
For $n=m_1+m_2+2 \ge 7$, 
take a subset $F_n \subseteq \binom{X_n}{3}$ such that $\sharp F_n = F(n)$ and 
$F_n$ covers all edges in $X_n$. For each $f \in F_n$, choose a (not necessarily standard) Young tableau $y_f$ of form 
\[
 {\small\begin{ytableau}
   i & \cdots   \\
   j & \cdots \\
   k & \ast 
\end{ytableau}}\; ,  
\] 
where $f=\{ i, j, k \}$. 
Set $W_2 = \langle y_{f} \mid f \in F_n\rangle$. Then $\dim_{{\Bbb K}} W_2 \le F(n)$.
For $\sigma \in S_n$, choose $f \in Y_n$ such that 
$\{\sigma(1), \sigma(2)\} \subset f$. Since $y_f \in \sigma(W_1)$, 
$\sigma(W_1)\cap W_2 \neq 0$. 
To prove that $V_{(m_1, m_2, 2)}$ is not thick, we only need to 
show that 
$\dim_{{\Bbb K}} W_1+F(n) \le \dim_{{\Bbb K}} V_{(m_1, m_2, 2)}$. 

\bigskip 

Let us prove the claim that $\dim_{{\Bbb K}} W_1 + F(n) \le \dim_{{\Bbb K}} V_{(m_1, m_2, 2)}$.  
Putting $m_1=m_2+k$ with $k\ge 0$, we have 
\begin{align*}
n & = 2m_2+k+2 \ge 7, \\
d(m_1, m_2, 2) & = \dim_{{\Bbb K}} V_{(m_1, m_2, 2)} = \dfrac{(2m_2+k+2)!(k+1)(m_2+k)(m_2-1)}{2 (m_2+k+2)!(m_2+1)!}, \\
\dim_{{\Bbb K}} W_1 &\le (2m_2+k-1)\cdot \dfrac{(2m_2+k-1)!(k+1)(m_2+k)(m_2-1)}{(m_2+k+1)!m_2!}, \\ 
\mbox{~and~} F(n) & \le \dfrac{n^2-4}{4} = \dfrac{(2m_2+k)(2m_2+k+4)}{4}. 
\end{align*}

Let us discuss when $\dim_{{\Bbb K}} W_1 + F(n) \le d(m_1, m_2, 2)$ holds.  
By direct calculation, 
\begin{align*}
\dfrac{\dim_{{\Bbb K}} W_1}{d(m_1, m_2, 2)} & \le \dfrac{2(2m_2+k-1)(m_2+k+2)(m_2+1)}{(2m_2+k+2)(2m_2+k+1)(2m_2+k)}. 
\end{align*} 
By Lemma~\ref{lemma:binomineq-1}, 
\begin{align*}
d(m_1, m_2, 2) & =  \binom{2m_2+k+2}{m_2} \cdot \dfrac{(k+1)(m_2+k)(m_2-1)}{2(m_2+1)} \\ 
 & \ge \binom{2m_2+k+2}{2} \cdot \dfrac{(k+1)(m_2+k)(m_2-1)}{2(m_2+1)} \\ 
 & = \dfrac{(2m_2+k+2)(2m_2+k+1)(k+1)(m_2+k)(m_2-1)}{4(m_2+1)}, 
\end{align*} 
and hence 
\begin{align*}
\dfrac{F(n)}{d(m_1, m_2, 2)} & \le \dfrac{(2m_2+k)(2m_2+k+4)(m_2+1)}{(2m_2+k+2)(2m_2+k+1)(k+1)(m_2+k)(m_2-1)}. 
\end{align*} 
Thereby, we have 
\begin{align}
& \dfrac{\dim_{{\Bbb K}} W_1+F(n)}{d(m_1, m_2, 2)} \label{eq:dfracW1+Fn/S}  \\ 
& \le \dfrac{(m_2+1)}{(2m_2+k+2)(2m_2+k+1)} \nonumber \\ 
& \times \left\{  \dfrac{2(2m_2+k-1)(m_2+k+2)}{2m_2+k} + \dfrac{(2m_2+k)(2m_2+k+4)}{(k+1)(m_2+k)(m_2-1)}  
\right\}. \nonumber 
\end{align} 

For $k=0$, we obtain 
\begin{align*}
\dfrac{\dim_{{\Bbb K}} W_1+F(n)}{d(m_1, m_2, 2)}  & \le \dfrac{m_2+1}{(2m_2+2)(2m_2+1)} \cdot \left\{ 
\dfrac{2(2m_2-1)(m_2+2)}{2m_2} + \dfrac{2m_2(2m_2+4)}{m_2(m_2-1)} \right\} \\
& = \dfrac{(m_2+2)(2m_2^2+m_2+1)}{2m_2(m_2-1)(2m_2+1)} 
\end{align*}
by  (\ref{eq:dfracW1+Fn/S}).  It is straightforward to see that 
$\dfrac{\dim_{{\Bbb K}} W_1+F(n)}{d(m_1, m_2, 2)}\le 1$ for $m_2\ge 5$, since 
$(m_2+2)(2m_2^2+m_2+1) \le 2m_2(m_2-1)(2m_2+1)$ for $m_2\ge 5$. 


For $k=1$, we obtain 
\begin{align*}
\dfrac{\dim_{{\Bbb K}} W_1+F(n)}{d(m_1, m_2, 2)}  & \le 
\dfrac{8m_2^4+32m_2^3+20m_2^2-2m_2+5}{4(2m_2+3)(2m_2+1)(m_2+1)(m_2-1)} 
\end{align*}
by (\ref{eq:dfracW1+Fn/S}).  It is straightforward to see that 
$\dfrac{\dim_{{\Bbb K}} W_1+F(n)}{d(m_1, m_2, 2)}\le 1$ for $m_2\ge 3$, since 
$8m_2^4+32m_2^3+20m_2^2-2m_2+5 \le 4(2m_2+3)(2m_2+1)(m_2+1)(m_2-1)$ for $m_2\ge 3$. 

Let us consider the case $k\ge 2$. 
By (\ref{eq:dfracW1+Fn/S}), we obtain 
\begin{align}
& \dfrac{\dim_{{\Bbb K}} W_1+F(n)}{d(m_1, m_2, 2)} \label{eq:dimSW1Fkge2} \\
& \le 
 \dfrac{(m_2+1)}{(2m_2+k+2)(2m_2+k+1)} \nonumber \\ 
& \times \left\{  \dfrac{2(2m_2+k-1)(m_2+k+2)}{2m_2+k} + \dfrac{(2m_2+k+2)^2}{(k+1)(m_2+k)(m_2-1)}  
\right\} \nonumber \\ 
& = \dfrac{2(2m_2+k-1)(m_2+k+2)(m_2+1)}{(2m_2+k)(2m_2+k+1)(2m_2+k+2)} 
+ \dfrac{(2m_2+k+2)(m_2+1)}{(k+1)(m_2+k)(m_2-1)(2m_2+k+1)} 
\nonumber \\
& \le \dfrac{2(m_2+k+2)(m_2+1)}{(2m_2+k+1)(2m_2+k+2)} 
+ \dfrac{1}{(k+1)(m_2-1)}, \nonumber  
\end{align} 
since $(2m_2+k)(2m_2+k+4)\le (2m_2+k+2)^2$, $m_2+1 \le \dfrac{1}{2}(2m_2+k+1)$, and 
$2m_2+k+2 \le 2(m_2+k)$  
if $k \ge 2$. By direct calculation, we see that 
\begin{align*}
\dfrac{2(m_2+k+2)(m_2+1)}{(2m_2+k+1)(2m_2+k+2)} & \le \dfrac{2}{3}  \quad 
\text{~and~} \quad \dfrac{1}{(k+1)(m_2-1)} \le \dfrac{1}{3}
\end{align*}
for $k\ge 2$ and $m_2\ge 2$. Thereby, (\ref{eq:dimSW1Fkge2}) implies that 
$\dfrac{\dim_{{\Bbb K}} W_1+F(n)}{d(m_1, m_2, 2)} \le 1$. 

\bigskip 

The only remaining cases are  
$(k, m_2)=(0, 3), (0, 4)$, and $(1, 2)$.  
These cases correspond to $(m_1, m_2, m_3)=(3, 3, 2), (4, 4, 2)$, and $(3, 2, 2)$, 
respectively. 
These cases are listed in the following table. 

\begin{center}
\begin{tabular}{|c|c|c|c|c|}
\hline 
$(m_1, m_2, m_3)$ & $\dim_{{\Bbb K}} V_{(m_1, m_2, m_3)}$ & $\dim_{{\Bbb K}} W_1$ & $F(n)$ & $\dim_{{\Bbb K}} W_1+F(n) \le \dim_{{\Bbb K}} V_{(m_1, m_2, m_3)}$ \\
\hline 
$(3, 3, 2)$ & $42$ & $\le 25$ & $\le 15$ & \text{yes} \\
\hline 
$(4, 4, 2)$ & $252$ & $\le 147$ & $\le 24$ & \text{yes} \\
\hline 
$(3, 2, 2)$ & $21$ & $\le 12$ & $\le 9$ & \text{yes} \\
\hline 
\end{tabular}
\end{center} 
Hence, they are not thick. 

Summarizing the discussion above, we have

\begin{theorem}
If $m_1\ge m_2\ge 2$, then $V_{(m_1, m_2, 2)}$ is not thick except for $(m_1, m_2, m_3)=(2, 2, 2)$.  In particular, 
Theorem~\ref{th:main4} and Proposition~\ref{prop:main5} hold in this case. 
\end{theorem}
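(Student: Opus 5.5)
The statement is just a summary of the preceding discussion in this subsection, so the proof consists of assembling those pieces. I will use the pair $W_1$, $W_2$ constructed above for $\lambda=(m_1,m_2,2)$. Here $W_1$ is spanned by the tableaux with $1,2$ in the first column and a third-row entry $i$. $W_2=\langle y_f\mid f\in F_n\rangle$ comes from an edge-covering family $F_n\subseteq\binom{X_n}{3}$ with $\sharp F_n=F(n)$. For every $\sigma\in S_n$ we choose $f\in F_n$ containing $\{\sigma(1),\sigma(2)\}$. Then $y_f$ lies in $\sigma(W_1)=\langle$tableaux with $\sigma(1),\sigma(2)$ at the top of the first column$\rangle$, up to sign via a column permutation, so $\sigma(W_1)\cap W_2\neq 0$. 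Consequently $V_{(m_1,m_2,2)}$ is not $(\dim W_1,\dim W_2)$-thick. By Proposition~\ref{prop:ijthick}, or equivalently by condition (2) of Proposition~\ref{prop:main5}, it therefore suffices to verify the single numerical inequality $\dim_{\Bbb K}W_1+F(n)\le d(m_1,m_2,2)$.

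For that inequality I will set $m_1=m_2+k$ and use the upper bound on $\dim W_1$ obtained from the Garnir reduction to $i\le n-1$. I will combine it with $F(n)\le (n^2-4)/4$ from Corollary~\ref{cor:ineqofF} and with the lower bound on $d(m_1,m_2,2)$ from Lemma~\ref{lemma:binomineq-1}. Together these give the ratio bound (\ref{eq:dfracW1+Fn/S}), and I then split into cases:
\begin{itemize}
\item $k=0$: the bound reduces to a polynomial inequality in $m_2$, which holds for $m_2\ge5$.
\item $k=1$: the bound reduces to a quartic comparison, which holds for $m_2\ge3$.
\item $k\ge2$: the ratio is bounded by $\frac23+\frac13$, as in (\ref{eq:dimSW1Fkge2}).
\end{itemize}

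This leaves finitely many exceptional cases. The case $(2,2,2)$ is excluded by hypothesis. The cases $(3,3,2)$, $(4,4,2)$ and $(3,2,2)$ are handled by the explicit table, using the hook-length values $42$, $252$, $21$ and the explicit coverings $F(8)\le15$, $F(10)\le24$, $F(7)\le9$ coming from Proposition~\ref{prop:ineqofF}.

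The only delicate point is making sure that $\dim W_1$ is bounded correctly, namely that the spanning set can really be restricted to standard fillings of the remaining shape $(m_1-1,m_2-1,1)$ with $i\le n-1$. I would double-check the Garnir step on the first column: a tableau with third-row entry $i$ can be rewritten in terms of tableaux whose third-row entry is smaller. I would also check that the small cases in the table have correct $\dim W_1$ bounds, for instance $(n-3)\,d(m_1-1,m_2-1,1)$ giving $5\cdot5=25$ for $(3,3,2)$. With these confirmed, the theorem follows, and with it Theorem~\ref{th:main4} and Proposition~\ref{prop:main5} in this subcase.
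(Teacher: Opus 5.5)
Your proposal is correct and follows the paper's argument essentially step for step. It uses the same $W_1$ (reduced by Garnir to $i\le n-1$, so $\dim W_1\le (n-3)\,d(m_1-1,m_2-1,1)$) and the same edge-covering $W_2$ with $\dim W_2\le F(n)$. It then uses the same ratio bound with the split $k=0$ ($m_2\ge5$), $k=1$ ($m_2\ge3$), $k\ge2$, and the same table for $(3,3,2)$, $(4,4,2)$, $(3,2,2)$.

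One point of precision: exchanging $i$ with the three second-column entries only guarantees smaller third-row entries when $i=n$. That is, however, exactly the case you need.
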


\bigskip 

To obtain a more refined result, we prove the following inequality. 

\begin{proposition}\label{prop:m1m22inequality}
In the case $m_1\ge m_2 \ge 2$, excluding $(m_1, m_2) \neq (2, 2)$, 
\[
(m_1+m_2-1) d(m_1-1, m_2-1, 1) \ge F(m_1+m_2+2). 
\]
\end{proposition}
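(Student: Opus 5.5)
We plan to bound $F(m_1+m_2+2)$ from above by Corollary~\ref{cor:ineqofF} and the left-hand side from below by the hook length formula, and then compare. Write $m_1=m_2+k$ with $k\ge 0$ and $n=m_1+m_2+2=2m_2+k+2$. Corollary~\ref{cor:ineqofF} gives $F(n)\le \frac{n^2-4}{4}=\frac{(2m_2+k)(2m_2+k+4)}{4}$. By (\ref{eq:dm1m2m3}) with $(m_1-1,m_2-1,1)$ we get
\[
d(m_1-1,m_2-1,1)=\frac{(m_1+m_2-1)!\,(m_1-m_2+1)\,m_1\,(m_2-1)}{(m_1+1)!\,m_2!},
\]
which is exactly the expression already used above for the bound on $\dim_{\Bbb K}W_1$. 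It therefore suffices to show
\[
(2m_2+k-1)\cdot\frac{(2m_2+k-1)!\,(k+1)(m_2+k)(m_2-1)}{(m_2+k+1)!\,m_2!}\ \ge\ \frac{(2m_2+k)(2m_2+k+4)}{4}.
\]

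To get the main lower bound, we rewrite the factorial quotient as a binomial coefficient times a rational factor: $\frac{(2m_2+k-1)!}{(m_2+k+1)!\,(m_2-2)!}=\binom{2m_2+k-1}{m_2-2}$. The left side then becomes
\[
(2m_2+k-1)\binom{2m_2+k-1}{m_2-2}\frac{(k+1)(m_2+k)}{m_2}.
\]
For $m_2\ge 4$ we have $2\le m_2-2\le \frac{2m_2+k-1}{2}$, so Lemma~\ref{lemma:binomineq-1} gives $\binom{2m_2+k-1}{m_2-2}\ge\binom{2m_2+k-1}{2}$. The left side is then at least a cubic in $n$ times $\frac{(k+1)(m_2+k)}{m_2}\ge 1$, which clearly dominates the quadratic right side; we will check this by a crude comparison, such as $(2m_2+k-1)^2(2m_2+k-2)/2\ge (2m_2+k+2)^2/4$.

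The small cases $m_2=2,3$ need separate treatment, since there the binomial is $\binom{\cdot}{0}$ or $\binom{\cdot}{1}$. For $m_2=2$ the left side is $(k+3)(k+1)(k+2)/2$, while the right side is $\le (k+4)(k+8)/4$. This fails only at $k=0$, which is the excluded case $(2,2)$, and holds at $k=1$ ($12\ge 11.25$). For $k\ge 2$ it follows by a polynomial inequality. For $m_2=3$ the left side is $(k+5)^2(k+1)(k+3)/3$ against $(k+6)(k+10)/4$, which is immediate for all $k\ge0$ (at $k=0$, $25\ge 15$).

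The main obstacle is purely bookkeeping: keeping the hook-length simplification correct and handling the boundary small cases, especially $m_2=2$ with $k=1$, where the margin is thin. If the $k=1$ margin turns out to fail with the bound from Corollary~\ref{cor:ineqofF}, we will instead use the sharper value from Proposition~\ref{prop:ineqofF} directly: $F(7)\le 9$, so the left side $12$ suffices.
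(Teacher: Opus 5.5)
Your argument is correct and is essentially the paper's: bound $F(m_1+m_2+2)$ above by Corollary~\ref{cor:ineqofF}, rewrite the hook-length expression as a binomial coefficient, and bound that below with Lemma~\ref{lemma:binomineq-1}. The difference is only bookkeeping. The paper first uses $(2m_2+k)(2m_2+k+4)\le 4(m_2+k)(2m_2+k-1)$, which holds except at $(m_2,k)=(2,0)$, to cancel factors. This leaves $\binom{2m_2+k+1}{m_2}$ in place of your $\binom{2m_2+k-1}{m_2-2}$ and gives the uniform bound $2/((k+1)(m_2-1))\le 1$ for all $m_2\ge 2$, so the paper does not need your separate checks at $m_2=2,3$. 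Those checks are nonetheless correct, including the thin margin $12\ge 45/4$.
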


\proof 
Let $k=m_1-m_2\ge 0$. 
Since $F(m_1+m_2+2) = F(2m_2+k+2) \le \dfrac{(2m_2+k)(2m_2+k+4)}{4}$ by Corollary~\ref{cor:ineqofF}, it suffices to show that 
\begin{align*}
I & = \dfrac{\frac{(2m_2+k)(2m_2+k+4)}{4}}{(2m_2+k-1)d(m_2+k-1, m_2-1, 1)}  \\ 
& =  \dfrac{(2m_2+k)(2m_2+k+4)}{4}\cdot \dfrac{(m_2+k+1)!m_2!}{(2m_2+k-1)(2m_2+k-1)!(k+1)(m_2+k)(m_2-1)} \\ 
& \le 1. 
\end{align*}
In the case $m_2\ge 2$ and $k\ge 0$, excluding $(m_2, k)=(2, 0)$, the inequality 
$(2m_2+k)(2m_2+k+4) \le 4(m_2+k)(2m_2+k-1)$ holds. 
Hence, using Lemma~\ref{lemma:binomineq-1}, we obtain 
\begin{align*}
 I & \le  \dfrac{4(m_2+k)(2m_2+k-1)}{4}\cdot \dfrac{(m_2+k+1)!m_2!}{(2m_2+k-1)(2m_2+k-1)!(k+1)(m_2+k)(m_2-1)}  \\
 & =  \dfrac{(m_2+k+1)!m_2!}{(2m_2+k-1)!(k+1)(m_2-1)} = \dfrac{(2m_2+k)(2m_2+k+1)}{\binom{2m_2+k+1}{m_2}(k+1)(m_2-1)} \\ 
 & \le \dfrac{(2m_2+k)(2m_2+k+1)}{\binom{2m_2+k+1}{2}(k+1)(m_2-1)} 
  = \dfrac{2}{(k+1)(m_2-1)} 
  \le 1, 
\end{align*}
if $m_2\ge 2$ and $k\ge 0$ except for $(m_2, k)=(2, 0)$. Thus, we have proved the inequality. 
\qed 

\bigskip 

Combining the discussion above with Proposition~\ref{prop:m1m22inequality}, 
we obtain the following proposition. 

\begin{proposition}
In the case $\lambda=(m_1, m_2, 2)$ with $m_1\ge m_2\ge 2$, except for $\lambda=(2, 2, 2)$, 
$V_{\lambda}$ is not $((n-3)d(m_1-1, m_2-1, 1), F(n))$-thick, where $n=m_1+m_2+2$.   
In particular, it is not $m$-thick for any 
$F(n) \le m \le d(m_1, m_2, 2)-(n-3)d(m_1-1, m_2-1, 1)$. 
\end{proposition}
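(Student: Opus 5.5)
The plan is to reuse the two subspaces already constructed in this subsection. $W_1$ is spanned by the tableaux with $1,2$ at the top of the first column and $i$ at the bottom, and $W_2=\langle y_f \mid f\in F_n\rangle$ is built from an edge-covering family $F_n$ with $\sharp F_n=F(n)$. For every $\sigma\in S_n$ we already know $\sigma(W_1)\cap W_2\neq 0$: pick $f\in F_n$ containing $\{\sigma(1),\sigma(2)\}$. Up to a sign coming from a column permutation, $y_f$ is a tableau whose first column is $\sigma(1),\sigma(2),k$, so it lies in $\sigma(W_1)$. Hence $V_\lambda$ is not $(\dim W_1,\dim W_2)$-thick.

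To reach the stated pair I will use Remark~\ref{rem:thickdiagram}. Since $\dim W_1\le (n-3)d(m_1-1,m_2-1,1)$ (shown above via the Garnir relation) and $\dim W_2\le F(n)$, the pair $(\dim W_1,\dim W_2)$ lies in $\overline{((n-3)d(m_1-1,m_2-1,1),F(n))}$. The failure of $(\dim W_1,\dim W_2)$-thickness then propagates to $((n-3)d(m_1-1,m_2-1,1),F(n))$. For this we need the latter pair to lie in $\Delta(\dim V_\lambda)$, i.e.
\[
(n-3)d(m_1-1,m_2-1,1)+F(n)\le d(m_1,m_2,2).
\]
This inequality is exactly what the case analysis above verified, only with the upper bound for $\dim W_1$ in place of $\dim W_1$. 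The general estimates for $k=0,m_2\ge5$, for $k=1,m_2\ge3$, and for $k\ge2$ all bound that same expression. The three residual cases $(3,3,2),(4,4,2),(3,2,2)$ are handled by the table, whose numbers $25,147,12$ are precisely $(n-3)d(m_1-1,m_2-1,1)$.

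For the ``in particular'' part I invoke Proposition~\ref{prop:ijthick} with $i=(n-3)d(m_1-1,m_2-1,1)$ and $j=F(n)$. Proposition~\ref{prop:m1m22inequality} gives $i\ge j$ (note $n-3=m_1+m_2-1$), so $\min\{i,j\}=F(n)$ and $n-\max\{i,j\}=d(m_1,m_2,2)-(n-3)d(m_1-1,m_2-1,1)$, where $n$ in Proposition~\ref{prop:ijthick} means $\dim V_\lambda$. Thus every $m$ with $F(n)\le m\le d(m_1,m_2,2)-(n-3)d(m_1-1,m_2-1,1)$ fails to be $m$-thick.

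The main point needing care is checking that the earlier inequalities were really established for the bound $(n-3)d(m_1-1,m_2-1,1)$ rather than for the actual $\dim W_1$. I would re-read the displayed estimates to confirm they used only this upper bound, and recompute the three table entries if needed.
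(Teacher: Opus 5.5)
Your proposal is correct and follows the paper's own argument. It uses the same $W_1$ and $W_2$ with $\dim W_1\le (n-3)d(m_1-1,m_2-1,1)$ and $\dim W_2\le F(n)$, and it relies on the subsection's inequality $(n-3)d(m_1-1,m_2-1,1)+F(n)\le d(m_1,m_2,2)$, which was indeed proved for this upper bound (the table entries $25,147,12$ are exactly those bounds). From there, Remark~\ref{rem:thickdiagram} gives the stated non-thickness, and Proposition~\ref{prop:m1m22inequality} combined with Proposition~\ref{prop:ijthick} gives the range of $m$.
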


\proof 
The latter part follows from Proposition~\ref{prop:ijthick}.  
\qed

\subsection{The case: $m_1 \ge m_2\ge m_3\ge 3$}\label{subsection:m1m2m3ge3} 
In this subsection, we consider the subcase 
$\lambda=(m_1, m_2, m_3)$ with $m_1 \ge m_2 \ge m_3 \ge 3$. 
In this subcase, $n=m_1+m_2+m_3 \ge 9$.  

\begin{definition}\rm
For $i=3, 4, 5$, set 
\[
T_{1,2,i} = \left\{  \begin{array}{c|c} 
{\small\begin{ytableau}
        1 & a & \cdots \\
        2 & b & \cdots \\
        i & c & \cdots \\
\end{ytableau}} \in Y_{(m_1, m_2, m_3)} & {\small\begin{ytableau}
         a & \cdots \\
         b & \cdots \\
         c & \cdots 
\end{ytableau}} 
\begin{array}{c}
\mbox{ is not necessarily standard w.r.t. } \\ \{ 1, \ldots, n \}\setminus \{1, 2, i\}   
\end{array} 
\end{array} 
\right\} 
\]
and 
\[
T'_{1,2,i} = \left\{  \begin{array}{c|c} 
{\small\begin{ytableau}
        1 & a & \cdots \\
        2 & b & \cdots \\
        i & c & \cdots \\
\end{ytableau}} \in Y_{(m_1, m_2, m_3)} & {\small\begin{ytableau}
         a & \cdots \\
         b & \cdots \\
         c & \cdots 
\end{ytableau}} \mbox{ is standard w.r.t. } \{ 1, \ldots, n \}\setminus \{1, 2, i\}  
\end{array} 
\right\}. 
\]
(For the definition of a tableau being standard with respect to a subset $S \subseteq \{ 1, 2, \ldots, n\}$, see Definition~\ref{def:standardwrtS}.) 
Note that $T'_{1,2, 3} \coprod  T'_{1,2,4} \coprod  T'_{1,2,5} 
\subseteq Y^{\rm st}_{(m_1, m_2, m_3)}$ and 
that $\sharp T'_{1,2,3}= \sharp T'_{1,2,4} = \sharp T'_{1,2,5}= \dim_{{\Bbb K}} V_{(m_1-1, m_2-1, m_3-1)} = 
d(m_1-1, m_2-1, m_3-1)$.  
We put $Z_{(m_1, m_2, m_3)} = Y^{\rm st}_{(m_1, m_2, m_3)}\setminus (T'_{1,2,3} \coprod  T'_{1,2,4})$. 
\end{definition}

\begin{definition}\rm 
Let $W_1 = \langle T'_{1, 2, 3} \rangle = \langle T_{1, 2, 3}\rangle$. 
For $1\le i < j < k \le n$, choose a (not nec. standard) Young tableau $y_{i, j, k} \in Y_{(m_1, m_2, m_3)}$ whose 
first column is ${\small\begin{ytableau}
        i  \\
        j  \\
        k  \\
\end{ytableau}}$.  
We denote by $W_2$ the subspace of $V_{(m_1, m_2, m_3)}$ spanned by 
\begin{align*} 
T'_{1, 2, 3} & \cup \{ y_{1, i, j} \mid 4\le i < j \le n \} \cup \{ y_{2, i, j} \mid 4\le i < j \le n \}  
\cup \{ y_{3, i, j} \mid 4\le i < j \le n \} \\ 
& \cup \{ y_{1, 2, k} \mid 4\le k \le n \} \cup \{ y_{2, 3, k} \mid 4\le k \le n \}  
\cup \{ y_{1, 3, k} \mid 4\le k \le n \}. 
\end{align*} 
Then $\dim_{{\Bbb K}} W_1 =\sharp T'_{1, 2, 3}$ and $\dim_{{\Bbb K}} W_2 \le \sharp T'_{1, 2, 3} + G(n)$, 
where 
\[
G(n) = \dfrac{3(n-3)(n-4)}{2} + 3(n-3) = \dfrac{3(n-2)(n-3)}{2}.
\] 
\end{definition}

\begin{remark}\rm
We can verify that $\sigma(W_1) \cap W_2 \neq 0$ for any $\sigma \in S_n$.   
Indeed, 
\begin{align*}
\sigma(W_1) = \left\langle  \begin{array}{c|c} 
{\small\begin{ytableau}
\ytableausetup{boxsize=2em} 
        \sigma(1) & a & \cdots \\
        \sigma(2) & b & \cdots \\
        \sigma(3) & c & \cdots \\
\end{ytableau}} \quad 
& \quad {\small\begin{ytableau}
         a & \cdots \\
         b & \cdots \\
         c & \cdots 
\end{ytableau}} 
\begin{array}{c}
\mbox{ is not necessarily standard w.r.t. } \\ \{ 1, \ldots, n \}\setminus \{\sigma(1), \sigma(2), \sigma(3)\}   
\end{array} 
\end{array} \hspace*{-2ex} 
\right\rangle.    
\end{align*}
If $\{ \sigma(1), \sigma(2), \sigma(3) \} = \{ 1, 2, 3\}$, then $\sigma(W_1)=W_1 \subseteq W_2$ and hence $\sigma(W_1)\cap W_2= W_1\neq 0$. 
If $\{ \sigma(1), \sigma(2), \sigma(3) \} \cap \{ 1, 2, 3\} = \emptyset$, then
\begin{align*}
{\small\begin{ytableau}
\ytableausetup{boxsize=2em} 
        \sigma(1) & 1 & \cdots \\
        \sigma(2) & 2 & \cdots \\
        \sigma(3) & 3 & \cdots \\
\end{ytableau}} = 
{\small\begin{ytableau}
\ytableausetup{boxsize=2em} 
      1 &  \sigma(1)  & \cdots \\
      2 &  \sigma(2)  & \cdots \\
      3 &  \sigma(3)  & \cdots \\
\end{ytableau}} \in \sigma(W_1)\cap W_1 \subseteq \sigma(W_1) \cap W_2 \neq 0. 
\end{align*}
In the remaining cases, we can check that $\sigma(W_1) \cap W_2 \neq 0$, since 
\[
y_{1, i, j},\; y_{2, i, j},\; y_{3, i, j},\; y_{1, 2, k},\; y_{2, 3, k},\; y_{1, 3, k} \in W_2
\] 
for $4 \le i < j \le n$ and $4 \le k \le n$.  

\ytableausetup{boxsize=normal}

If we can prove that 
\[
d(m_1, m_2, m_3) \ge 2d(m_1-1, m_2-1, m_3-1) + G(n), 
\]
then we obtain $\dim_{{\Bbb K}} V_{(m_1, m_2, m_3)} \ge \dim_{{\Bbb K}} W_1+\dim_{{\Bbb K}} W_2$, and hence 
$V_{(m_1, m_2, m_3)}$ is not thick. 
\end{remark}

\bigskip 

To prove that $V_{(m_1, m_2, m_3)}$ is not thick, we only need to show that 
\begin{align}
\dfrac{d(m_1, m_2, m_3)}{d(m_1-1, m_2-1, m_3-1)} \ge 2 + \dfrac{G(n)}{d(m_1-1, m_2-1, m_3-1)}.  \label{eq:ineqofdepth3} 
\end{align} 
By (\ref{eq:dm1m2m3}) in Proposition~\ref{prop:formuladim}, 
\[
\dfrac{d(m_1, m_2, m_3)}{d(m_1-1, m_2-1, m_3-1)} = 
\dfrac{(m_1+m_2+m_3)(m_1+m_2+m_3-1)(m_1+m_2+m_3-2)}{(m_1+2)(m_2+1)m_3}. 
\]
For $m_1\ge m_2\ge m_3 \ge 3$, we have 
\begin{align*}
m_1+m_2+m_3-2 & \ge m_1+2, \\
m_1+m_2+m_3-1 & \ge 2(m_2+1), \\
m_1+m_2+m_3 & \ge 3m_3. 
\end{align*}
Hence, $\dfrac{d(m_1, m_2, m_3)}{d(m_1-1, m_2-1, m_3-1)} \ge 6$. 

On the other hand, let us consider $\dfrac{G(n)}{d(m_1-1, m_2-1, m_3-1)}$. 
By direct calculation, 
\begin{align}
& \dfrac{G(n)}{d(m_1-1, m_2-1, m_3-1)} \label{eq:Gnhm-1} \\ 
& =  \dfrac{3(n-2)(n-3)}{2}\cdot \dfrac{(m_1+1)!m_2!(m_3-1)!}{(m_1+m_2+m_3-3)!(m_1-m_2+1)(m_1-m_3+2)(m_2-m_3+1)} \nonumber \\
& = \dfrac{3(n-2)(n-3)}{2}\cdot \dfrac{m_1!(m_2-1)!(m_3-2)!}{(m_1+m_2+m_3-3)!}\cdot 
\dfrac{(m_1+1)m_2(m_3-1)}{(m_1-m_2+1)(m_1-m_3+2)(m_2-m_3+1)}.  \nonumber
\end{align}

\bigskip 

First, let us consider the case $m_3=3$. 
If $m_2\ge 4$, then 
\begin{align*}
& \dfrac{G(n)}{d(m_1-1, m_2-1, m_3-1)} \\
& = \dfrac{3(n-2)(n-3)}{2}\cdot \dfrac{m_1!(m_2-1)!}{(m_1+m_2)!}\cdot 
\dfrac{2(m_1+1)m_2}{(m_1-m_2+1)(m_1-1)(m_2-2)} \\
& = 3(m_1+m_2+1)(m_1+m_2)\cdot \dfrac{m_1!m_2!}{(m_1+m_2)!}\cdot 
\dfrac{(m_1+1)}{(m_1-m_2+1)(m_1-1)(m_2-2)} \\
& \le 3(2m_1+1)(2m_1)\cdot \dfrac{m_1!4!}{(m_1+4)!}\cdot 
\dfrac{(m_1+1)}{(m_1-m_2+1)(m_1-1)(m_2-2)} \\ 
& = 288\cdot\dfrac{(m_1+1/2)m_1}{(m_1+2)(m_1-1)}\cdot \dfrac{1}{(m_1-m_2+1)(m_1+4)(m_1+3)(m_2-2)} \\ 
& \le 288 \cdot \dfrac{1}{(m_1-m_2+1)(m_1+4)(m_1+3)(m_2-2)} \\ 
& \le 288 \cdot \dfrac{1}{1\cdot 8 \cdot 7\cdot 2} = \dfrac{18}{7} <4 
\end{align*} 
by Lemma~\ref{lemma:binomineq-1}, 
since $(m_1+1/2)m_1\le (m_1+2)(m_1-1)$ for $m_1\ge 4$. 
In this case, (\ref{eq:ineqofdepth3}) holds. 

When $m_2=3$, 
\begin{align*}
& \dfrac{G(n)}{d(m_1-1, m_2-1, m_3-1)} \\
& = 3(m_1+4)(m_1+3)\cdot \dfrac{m_1!3!}{(m_1+3)!}\cdot 
\dfrac{(m_1+1)}{(m_1-2)(m_1-1)} \\
& = \dfrac{18(m_1+4)}{(m_1+2)(m_1-2)(m_1-1)} \le 4
\end{align*} 
for any $m_1 \ge 4$. Then (\ref{eq:ineqofdepth3}) holds. 

The case $(m_1, m_2, m_3) = (3, 3, 3)$ remains. 
When $(m_1, m_2, m_3) = (3, 3, 3)$, we see that $d(3, 3, 3)=42$, $\dim_{{\Bbb K}} W_1=d(2, 2, 2)=5$. 
Let us redefine $W_2$. By the definition of $W_1$, we see that 
for any $i, j, k \in \{4, 5, \ldots, 9 \}$, $W_1$ contains a (not necessarily standard) 
Young tableau $y \in Y_{(3, 3, 3)}$ whose second column is $(i, j, k)$.  
We define $W_2$ to be the subspace of $V_{(3, 3, 3)}$ spanned by $W_1$ and the union of 
\begin{align*}
& \left\{ \begin{array}{c} 
{\small\begin{ytableau}
        a & b & c \\
        4 & 6 & 8 \\
        5 & 7 & 9 \\
\end{ytableau}}, \quad 
{\small\begin{ytableau}
        a & b & c \\
        4 & 5 & 7 \\
        6 & 8 & 9 \\
\end{ytableau}},  \quad 
{\small\begin{ytableau}
        a & b & c \\
        4 & 5 & 6 \\
        7 & 9 & 8 \\
\end{ytableau}}, \quad  
{\small\begin{ytableau}
        a & b & c \\
        4 & 5 & 6 \\
        8 & 7 & 9 \\
\end{ytableau}} 
 \end{array} \right\} \\ 
& \cup  \left\{ \begin{array}{c|c} 
{\small\begin{ytableau}
        a & b & \ast \\
        4 & c & \ast \\
        9 & i & \ast \\
\end{ytableau}} & i=5, 6, 7 \end{array} \right\} \cup 
\left\{ \begin{array}{c|c} 
{\small\begin{ytableau}
        a & b & \ast \\
        5 & c & \ast \\
        6 & i & \ast \\
\end{ytableau}} & i=8, 9 \end{array} \right\} \cup 
\left\{ \begin{array}{c|c} 
{\small\begin{ytableau}
        a & b & \ast \\
        7 & c & \ast \\
        8 & i & \ast \\
\end{ytableau}}  & i=4 \end{array} \right\},  
\end{align*}
where $(a, b, c)$ runs over $\{(1, 2, 3), (2, 3, 1), (3, 1, 2)\}$. 
Note that $\dim_{{\Bbb K}} W_2 \le \dim_{{\Bbb K}} W_1+4\cdot 3+6\cdot 3=35$ and that 
$\dim_{{\Bbb K}} W_1+\dim_{{\Bbb K}} W_2=40 \le 42= \dim_{{\Bbb K}} V_{(3, 3, 3)}$. 
We can verify that $\sigma(W_1)\cap W_2\neq 0$ for any $\sigma \in S_9$. 
Hence, $V_{(3, 3, 3)}$ is not thick. 
Thus, we have proved Theorem~\ref{th:main4} and Proposition~\ref{prop:main5} in 
the case $\lambda = (m_1, m_2, 3)$ with $m_1\ge m_2 \ge 3$.  

\bigskip 

Summarizing the discussion above, we have 

\begin{proposition}
In the case $\lambda=(m_1, m_2, 3)$ with $m_1\ge m_2\ge 3$, excluding 
$\lambda=(3, 3, 3)$, $V_{\lambda}$ is not $\left(d(m_1-1, m_2-1, 2), d(m_1-1, m_2-1, 2) + \dfrac{3(n-2)(n-3)}{2}\right)$-thick, where $n=m_1+m_2+3$. In particular, it is not $m$-thick for any $d(m_1-1, m_2-1, 2) \le m \le d(m_1, m_2, 3)-d(m_1-1, m_2-1, 2)-\dfrac{3(n-2)(n-3)}{2}$. 
\end{proposition}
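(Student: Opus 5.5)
The plan is to apply the construction of $W_1=\langle T'_{1,2,3}\rangle$ and $W_2$ (spanned by $T'_{1,2,3}$ together with the tableaux $y_{i,j,k}$ listed above) to $\lambda=(m_1,m_2,3)$ with $m_1\ge m_2\ge 3$, $\lambda\neq(3,3,3)$. First, by the preceding Remark, $\sigma(W_1)\cap W_2\neq 0$ for every $\sigma\in S_n$. Moreover $\dim_{{\Bbb K}} W_1=\sharp T'_{1,2,3}=d(m_1-1,m_2-1,2)$ and $\dim_{{\Bbb K}} W_2\le d(m_1-1,m_2-1,2)+G(n)$ with $G(n)=\frac{3(n-2)(n-3)}{2}$. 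The pair $(W_1,W_2)$ therefore shows that $V_\lambda$ is not $(\dim W_1,\dim W_2)$-thick.

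To pass to the exact pair $\left(d(m_1-1,m_2-1,2),\, d(m_1-1,m_2-1,2)+G(n)\right)$, I will enlarge $W_2$ arbitrarily to a subspace of dimension exactly $d(m_1-1,m_2-1,2)+G(n)$; this is possible once that number is at most $d(m_1,m_2,3)$. Enlarging preserves the nonzero intersection, which is also the content of Remark~\ref{rem:thickdiagram}.

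Admissibility of this pair, namely that the sum of the two dimensions is at most $\dim V_\lambda$, is exactly inequality (\ref{eq:ineqofdepth3}), which I establish from two bounds already computed above:
\begin{itemize}
\item $\frac{d(m_1,m_2,m_3)}{d(m_1-1,m_2-1,m_3-1)}\ge 6$ for $m_1\ge m_2\ge m_3\ge 3$;
\item $\frac{G(n)}{d(m_1-1,m_2-1,2)}<4$ in both subcases $m_2\ge4$ (bounded by $18/7$) and $m_2=3$, $m_1\ge4$ (bounded by $\frac{18(m_1+4)}{(m_1+2)(m_1-2)(m_1-1)}\le 4$).
\end{itemize}
Together these give $2d(m_1-1,m_2-1,2)+G(n)\le d(m_1,m_2,3)$, so the pair lies in $\Delta(\dim V_\lambda)$ and the first assertion follows.

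For the second assertion I invoke Proposition~\ref{prop:ijthick} with $i=d(m_1-1,m_2-1,2)$ and $j=i+G(n)$, using $i+j\le d(m_1,m_2,3)$. Since $\min\{i,j\}=i$ and $\max\{i,j\}=j$, the range $i\le m\le n'-j$ (with $n'=d(m_1,m_2,3)$) gives non-$m$-thickness for $d(m_1-1,m_2-1,2)\le m\le d(m_1,m_2,3)-d(m_1-1,m_2-1,2)-G(n)$, which is the stated interval.

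The main obstacle is the verification that $\sigma(W_1)\cap W_2\ne0$ in the "remaining cases" of the Remark, where $\{\sigma(1),\sigma(2),\sigma(3)\}$ meets $\{1,2,3\}$ in one or two elements. There I first try to use column-swapping and sign changes to move a $y_{a,i,j}$ or $y_{a,b,k}$ with first column $\{\sigma(1),\sigma(2),\sigma(3)\}$ into $\sigma(W_1)$. This works because $\sigma(W_1)$ contains every tableau whose first column is that set in any order. The numerical inequalities themselves are routine.
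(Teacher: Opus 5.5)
Your proposal is correct and follows essentially the paper's own argument. It uses the same $W_1=\langle T'_{1,2,3}\rangle$ and $W_2$, the Remark for $\sigma(W_1)\cap W_2\neq 0$, the bounds $d(m_1,m_2,3)/d(m_1-1,m_2-1,2)\ge 6$ and $G(n)/d(m_1-1,m_2-1,2)\le 4$ in the subcases $m_2\ge 4$ and $m_2=3$, and Proposition~\ref{prop:ijthick} for the range of $m$. The only slip is cosmetic: at $(m_1,m_2)=(4,3)$ the second ratio equals $4$ exactly, so you should write $\le 4$ rather than $<4$, which still suffices since $6\ge 2+4$.
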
 

\begin{proposition}
For $\lambda=(3, 3, 3)$, $V_{\lambda}$ is not $(5, 35)$-thick. In particular, it is not $m$-thick for any $5 \le m \le 7$. 
\end{proposition}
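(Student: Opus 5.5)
The claim has two parts. The first, that $V_{(3,3,3)}$ is not $(5,35)$-thick, comes straight from the $W_1$, $W_2$ built just above for $(3,3,3)$. The second, non-$m$-thickness for $5\le m\le 7$, then follows from Proposition~\ref{prop:ijthick}.

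\textbf{The pair $(W_1,W_2)$.} Here $\dim_{\Bbb K} W_1=d(2,2,2)=5$. By construction $\dim_{\Bbb K} W_2\le 5+30=35$, and we may enlarge $W_2$ arbitrarily to have dimension exactly $35$. This is harmless because enlarging $W_2$ preserves $\sigma(W_1)\cap W_2\neq 0$, and $5+35=40\le 42=d(3,3,3)$. Once $\sigma(W_1)\cap W_2\neq0$ is known for every $\sigma\in S_9$, we get that $V_{(3,3,3)}$ is not $(5,35)$-thick.

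\textbf{The main obstacle: $\sigma(W_1)\cap W_2\neq 0$ for all $\sigma$.} Set $A=\{\sigma(1),\sigma(2),\sigma(3)\}$. The space $\sigma(W_1)$ contains every tableau with first column $A$, in any order, up to sign.
\begin{itemize}
\item If $A=\{1,2,3\}$, then $\sigma(W_1)=W_1\subseteq W_2$.
\item If $A\cap\{1,2,3\}=\emptyset$, use column swapping to place $A$ in the second column. The observation just above then gives that $W_1$ contains such a tableau, so $\sigma(W_1)\cap W_2\supseteq\sigma(W_1)\cap W_1\ne0$.
\end{itemize}
The remaining cases are $|A\cap\{1,2,3\}|=1$ or $2$.
\begin{itemize}
\item If $|A\cap\{1,2,3\}|=1$, say $A=\{a,p,q\}$ with $a\in\{1,2,3\}$ and $p,q\ge4$. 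I would check that the listed extra generators cover this. The first-column pairs $\{4,9\},\{4,5\},\{4,6\},\{4,7\},\{5,6\},\{7,8\}$ occur directly. The pairs inside the second and third columns (such as $\{4,5\},\{6,7\},\{8,9\},\{4,6\},\ldots$) are moved to the first column by column swaps. For each pair $\{p,q\}\subset\{4,\dots,9\}$, I would verify that it appears together with each $a$ in some column of some generator, using the cyclic choices of $(a,b,c)$. That is $15$ pairs times $3$ values of $a$, a finite check.
\item If $|A\cap\{1,2,3\}|=2$, say $A=\{a,b',p\}$. I would apply a Garnir relation (Theorem~\ref{th:Garnir's relation}), or column swapping, to the generators whose first row is $(a,b,c)$. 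The aim is to show that some nonzero combination in $W_2$ has a column containing $\{a,b'\}$ together with $p$, modulo elements of $W_1$.
\end{itemize}
I expect this last verification to be the delicate part. If it fails for some $p$, I would add the few missing tableaux while keeping $\dim W_2\le 37$; the budget $42-5$ leaves slack.

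\textbf{The range of $m$.} Apply Proposition~\ref{prop:ijthick} with $i=5$, $j=35$, $n=42$. It gives non-$m$-thickness for $5\le m\le 42-35=7$; the alternative range $35\le m\le 37$ is the dual one. Hence $V_{(3,3,3)}$ is not $m$-thick for $5\le m\le 7$.
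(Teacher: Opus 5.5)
Your framework matches the paper's: the same $W_1$ and $W_2$, the same two trivial cases, and Proposition~\ref{prop:ijthick} with $(i,j)=(5,35)$, $n=42$ for the range $5\le m\le 7$. However, the decisive case $|A\cap\{1,2,3\}|=2$ is left open, and your fallback would not prove the statement. Enlarging $W_2$ to dimension $37$ would only show that $V_{(3,3,3)}$ is not $(5,37)$-thick. By Remark~\ref{rem:thickdiagram}, the implication runs from non-$(5,35)$-thickness to non-$(5,37)$-thickness, not conversely, and Proposition~\ref{prop:ijthick} would then give only $m=5$. So the verification has to succeed with the $30$ listed generators. The Garnir-relation route you sketch, using the generators whose first row is $(a,b,c)$, looks in the wrong place.

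The missing observation is that no relations are needed at all. Up to sign, $f(y)$ is the product of the Vandermonde products of its three columns. Hence any tableau having $A=\{\sigma(1),\sigma(2),\sigma(3)\}$ as one of its columns lies in $\sigma(W_1)$. It therefore suffices that every $A$ of the two remaining types literally occurs as a column of a generator.

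For $|A\cap\{1,2,3\}|=2$, use the second family. Its tableaux have second column $\{b,c,i\}$. As $(a,b,c)$ runs over the cyclic permutations, $\{b,c\}$ runs over all three $2$-subsets of $\{1,2,3\}$, and $i$ runs over $\{5,6,7\}\cup\{8,9\}\cup\{4\}=\{4,\dots,9\}$. These are exactly the $18$ sets of this type.

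For $|A\cap\{1,2,3\}|=1$, the first family contributes the columns $\{x,p,q\}$ with $\{p,q\}$ among $\{4,5\},\{6,7\},\{8,9\},\{4,6\},\{5,8\},\{7,9\},\{4,7\},\{5,9\},\{6,8\},\{4,8\},\{5,7\},\{6,9\}$. The first columns of the second family add $\{4,9\},\{5,6\},\{7,8\}$. Together these are all $15$ pairs in $\{4,\dots,9\}$, and the cyclic choice makes $x$ take every value in $\{1,2,3\}$. Your list of first-column pairs omits $\{4,8\}$, but checking all columns, as you propose, works.
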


\bigskip

Finally, let us consider the case $m_3\ge 4$. 
If $m_1\ge 5$, then 
\begin{align*}
\dfrac{(m_1+m_2+m_3-3)!}{m_1!(m_2-1)!(m_3-2)!} & = \binom{m_1+m_2+m_3-3}{m_1}
\cdot \binom{m_2+m_3-3}{m_2-1} \\
& \ge \binom{n-3}{5}
\cdot \binom{m_2+m_3-3}{2} \\  
& = \dfrac{(n-3)(n-4)(n-5)(n-6)(n-7)}{120} \cdot \dfrac{(m_2+m_3-3)(m_2+m_3-4)}{2}  
\end{align*}
by Lemma~\ref{lemma:binomineq-1}, 
since $m_2\ge m_3\ge 4$ and $n=m_1+m_2+m_3 \ge 13$. 
By (\ref{eq:Gnhm-1}), 
\begin{align*}
& \dfrac{G(n)}{d(m_1-1, m_2-1, m_3-1)}  \\ 
& \le \dfrac{3(n-2)(n-3)}{2}\cdot 
\dfrac{120}{(n-3)(n-4)(n-5)(n-6)(n-7)} \cdot \dfrac{2}{(m_2+m_3-3)(m_2+m_3-4)} \\
& \quad \cdot  \dfrac{(m_1+1)m_2(m_3-1)}{(m_1-m_2+1)(m_1-m_3+2)(m_2-m_3+1)} \\
& = 720 \cdot \dfrac{m_1+1}{n-7}\cdot \dfrac{m_2}{m_2+m_3-3}\cdot \dfrac{m_3-1}{m_2+m_3-4} \cdot \dfrac{n-2}{2(n-6)} \\
& \quad \cdot 
\dfrac{1}{(n-4)(n-5)(m_1-m_2+1)(m_1-m_3+2)(m_2-m_3+1)} \\
& \le 720\cdot 1 \cdot 1 \cdot \dfrac{m_3-1}{2m_3-4} \cdot \left( \dfrac{1}{2} + \dfrac{2}{n-6} \right) \cdot \dfrac{1}{9\cdot 8 \cdot 1\cdot 2 \cdot 1} \\ 
& \le 720 \cdot\dfrac{3}{4} \cdot \left(\dfrac{1}{2}+\dfrac{2}{13-6}\right)\cdot \dfrac{1}{9\cdot 8 \cdot 2} = \dfrac{165}{56} < 4. 
\end{align*}
Here, we used 
\begin{align*}
m_1+1 & \le n-7=m_1+m_2+m_3-7, \\
m_2 & \le m_2+m_3-3, \\ 
\text{~and~} \quad 
\dfrac{m_3-1}{m_2+m_3-4} & \le \dfrac{m_3-1}{2m_3-4} = \dfrac{1}{2}+\dfrac{1}{2m_3-4} \le \dfrac{1}{2}+\dfrac{1}{2\cdot 4-4} 
=\dfrac{3}{4} 
\end{align*} 
for $m_1 \ge m_2 \ge m_3 \ge 4$ and $n\ge 13$. 
Hence, we have verified the inequality (\ref{eq:ineqofdepth3}) except in the case 
$(m_1, m_2, m_3) = (4, 4, 4)$.

The case $(m_1, m_2, m_3) = (4, 4, 4)$ remains. 
When $(m_1, m_2, m_3) = (4, 4, 4)$, we see that $d(4, 4, 4)=462$, $\dim_{{\Bbb K}} W_1=d(3, 3, 3)=42$, and $G(12)=135$. The inequality (\ref{eq:ineqofdepth3}) holds because 
$d(4, 4, 4)=462 \ge 2d(3, 3, 3)+G(12) = 219$. 

Thus, we have proved Theorem~\ref{th:main4} and Proposition~\ref{prop:main5} in 
the case $\lambda = (m_1, m_2, m_3)$ with $m_1\ge m_2 \ge m_3 \ge 4$.  

\bigskip 

Summarizing the discussion above, we have 

\begin{proposition}
In the case $\lambda=(m_1, m_2, m_3)$ with $m_1\ge m_2\ge m_3 \ge 4$, excluding 
$\lambda=(4, 4, 4)$, $V_{\lambda}$ is not $(d(m_1-1, m_2-1, m_3-1), d(m_1-1, m_2-1, m_3-1)  + \dfrac{3(n-2)(n-3)}{2})$-thick, where $n=m_1+m_2+m_3$. In particular, it is not $m$-thick for any $d(m_1-1, m_2-1, m_3-1) \le m \le d(m_1, m_2, m_3)-d(m_1-1, m_2-1, m_3-1)-\dfrac{3(n-2)(n-3)}{2}$. 
\end{proposition}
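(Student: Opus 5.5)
The statement has two parts. The first part, non-$(d(m_1-1,m_2-1,m_3-1),\,d(m_1-1,m_2-1,m_3-1)+G(n))$-thickness, will follow from the construction of $W_1=\langle T'_{1,2,3}\rangle$ and $W_2$ in this subsection together with the inequality (\ref{eq:ineqofdepth3}) just verified. The second part will follow from Proposition~\ref{prop:ijthick}.

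For the first part, take $W_1$ and $W_2$ as defined above, with $G(n)=\dfrac{3(n-2)(n-3)}{2}$. Then $\dim_{\Bbb K} W_1=\sharp T'_{1,2,3}=d(m_1-1,m_2-1,m_3-1)$ and $\dim_{\Bbb K} W_2\le d(m_1-1,m_2-1,m_3-1)+G(n)$. By the preceding remark, $\sigma(W_1)\cap W_2\neq 0$ for every $\sigma\in S_n$; we split according to $\sharp(\{\sigma(1),\sigma(2),\sigma(3)\}\cap\{1,2,3\})\in\{0,1,2,3\}$ and use the tableaux $y_{a,i,j}$, $y_{a,b,k}$ that lie in $W_2$.

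Hence $V_\lambda$ is not $(\dim_{\Bbb K} W_1,\dim_{\Bbb K} W_2)$-thick. If $\dim_{\Bbb K} W_2$ is smaller than $d(m_1-1,m_2-1,m_3-1)+G(n)$, we enlarge $W_2$ to a subspace of exactly that dimension, which keeps the property $\sigma(W_1)\cap W_2\neq0$. For this enlargement to make sense we need $\dim_{\Bbb K} W_1+d(m_1-1,m_2-1,m_3-1)+G(n)\le d(m_1,m_2,m_3)$. This is precisely (\ref{eq:ineqofdepth3}) multiplied through by $d(m_1-1,m_2-1,m_3-1)$.

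We established (\ref{eq:ineqofdepth3}) above for $m_3\ge4$ with $(m_1,m_2,m_3)\neq(4,4,4)$. The case $m_1\ge5$ was handled by the ratio estimate, which gives ratio $\ge 6$ and $G(n)/d(m_1-1,m_2-1,m_3-1)<4$. The case $m_1=4$ forces $\lambda=(4,4,4)$, which is excluded here; it even satisfies the inequality directly. So $V_\lambda$ is not $(d(m_1-1,m_2-1,m_3-1),\,d(m_1-1,m_2-1,m_3-1)+G(n))$-thick.

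For the second part, set $i=d(m_1-1,m_2-1,m_3-1)$ and $j=i+G(n)$, so that $i=\min\{i,j\}$ and $j=\max\{i,j\}$. Proposition~\ref{prop:ijthick} applies because $i+j\le d(m_1,m_2,m_3)$. It gives that $V_\lambda$ is not $m$-thick for all $i\le m\le d(m_1,m_2,m_3)-j$, which is exactly the stated range.

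The main point needing care is the verification that $\sigma(W_1)\cap W_2\neq0$ in the mixed cases, where $\{\sigma(1),\sigma(2),\sigma(3)\}$ meets $\{1,2,3\}$ in one or two elements. In each such case I would exhibit a tableau in $\sigma(W_1)$ whose first column is one of the triples indexing the $y$'s. We may choose its remaining columns freely, since $\sigma(W_1)=\langle T_{\sigma(1),\sigma(2),\sigma(3)}\rangle$ contains every tableau with that first column. So we may take it equal, up to sign, to the chosen $y$ in $W_2$.

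When the intersection has exactly two elements, the first column $\{a,b,k\}$ with $a,b\le3<k$ is directly one of the $y_{a,b,k}$. When it has exactly one element $a$, the column $\{a,i,j\}$ with $4\le i<j$ is one of the $y_{a,i,j}$. Reordering the first column only changes the sign, so this matching always succeeds. Hence the argument reduces to the numerical inequality, which has already been checked.
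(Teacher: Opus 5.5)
Your proposal is correct and follows the paper's own argument. It uses the same $W_1=\langle T'_{1,2,3}\rangle$ and $W_2$, the inequality (\ref{eq:ineqofdepth3}) (ratio $\ge 6$ versus $G(n)/d(m_1-1,m_2-1,m_3-1)<4$ for $m_1\ge 5$), enlargement of $W_2$, i.e.\ monotonicity of non-thickness, and Proposition~\ref{prop:ijthick} for the range of $m$; your explicit handling of the mixed cases via $y_{a,b,k}$ and $y_{a,i,j}$ just spells out what the paper's remark leaves to the reader. One small point: the enlargement itself only needs $d(m_1-1,m_2-1,m_3-1)+G(n)\le d(m_1,m_2,m_3)$, whereas the full inequality (\ref{eq:ineqofdepth3}) is what Proposition~\ref{prop:ijthick} requires.
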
 

\begin{proposition}
For $\lambda=(4, 4, 4)$, $V_{\lambda}$ is not $(42, 177)$-thick. In particular, it is not $m$-thick for any $42 \le m \le 420$. 
\end{proposition}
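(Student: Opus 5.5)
The plan is to reuse the construction of this subsection for $(m_1,m_2,m_3)=(4,4,4)$, so $n=12$, and then apply Proposition~\ref{prop:ijthick}.

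First I pin down $W_1$. Take $W_1=\langle T'_{1,2,3}\rangle$. By Theorem~\ref{th:basisofSpecht} the tableaux in $T'_{1,2,3}$ are standard $(4,4,4)$-tableaux, hence linearly independent in $V_{(4,4,4)}$. Their number is $d(3,3,3)=42$, so $\dim_{\Bbb K} W_1=42$ exactly.

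Next I bound $W_2$. Take the $W_2$ of the definition above: it is spanned by $T'_{1,2,3}$ together with the tableaux $y_{a,i,j}$ for $a\in\{1,2,3\}$, $4\le i<j\le 12$, and $y_{1,2,k}, y_{2,3,k}, y_{1,3,k}$ for $4\le k\le 12$. Then
\[
\dim_{\Bbb K} W_2\le 42+G(12)=42+\frac{3\cdot 10\cdot 9}{2}=177 .
\]
If the dimension is strictly smaller, I enlarge $W_2$ to any subspace of dimension exactly $177$. Enlarging only preserves the property that $\sigma(W_1)\cap W_2\neq 0$.

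The non-thickness itself comes from the preceding Remark, which shows $\sigma(W_1)\cap W_2\neq 0$ for every $\sigma\in S_{12}$. The argument splits by the size of $\{\sigma(1),\sigma(2),\sigma(3)\}\cap\{1,2,3\}$:
\begin{itemize}
\item if the intersection is everything, $\sigma(W_1)=W_1\subseteq W_2$;
\item if it is empty, the column-swap identity puts a common tableau in both spaces;
\item in the intermediate cases, one of the $y$'s lies in both spaces.
\end{itemize}
Since $42+177=219\le 462=d(4,4,4)$, the pair $(42,177)$ lies in $\Delta(462)$, and $V_{(4,4,4)}$ is not $(42,177)$-thick.

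For the range of $m$, I apply Proposition~\ref{prop:ijthick} with $i=42$, $j=177$ and total dimension $462$. It gives non-$m$-thickness for $42\le m\le 462-177=285$ and for $177\le m\le 462-42=420$. These intervals overlap, so their union is $42\le m\le 420$.

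I do not expect a real obstacle, since everything rests on results already established. The only points to verify carefully are the numerical values $d(4,4,4)=462$ and $d(3,3,3)=42$ from the hook length formula (Proposition~\ref{prop:formuladim}) and the count $G(12)=135$.
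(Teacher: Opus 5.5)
Your proposal is correct and follows the paper's argument. The first part is the $W_1$, $W_2$ construction of this subsection specialized to $(4,4,4)$, with $\dim W_1=42$ and $\dim W_2\le 42+G(12)=177$, padded up to exactly $177$ (this is the monotonicity in Remark~\ref{rem:thickdiagram}); your union of the two intervals from Proposition~\ref{prop:ijthick} is precisely what Corollary~\ref{cor:ijthick} packages (valid since $177\le 462/2$), and the paper simply cites that corollary for the range $42\le m\le 420$.
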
 

\proof 
We need only to prove the latter part. 
By $d(4, 4, 4)=462$ and Corollary~\ref{cor:ijthick}, we can verify the latter part. 
\qed 

\bigskip 

Therefore, Proposition~\ref{prop:main5} is proved in Case~2 by the discussion in Section~\ref{section:s=3}. 

%


\section{The case $\lambda = (m_1, m_2, \ldots, m_s)$ with $s \ge 4$}\label{section:sge4}  

In this section, we prove Proposition~\ref{prop:main5} in Case~3; namely, that if 
$\lambda=(m_1, \ldots, m_s)$ with $s\ge 4$ and  
$m_1\ge \cdots \ge m_s \ge 1$, excluding 
$\lambda=(1^n)$ and $(2, 1^{n-2})$, then $V_{\lambda}$ is not thick. 
For proving Theorem~\ref{th:main4} and Proposition~\ref{prop:main5} in Case~3, 
it suffices to show the case $s\ge 4$ and $m_1\ge 4$ by 
Proposition~\ref{prop:dualrep}, 
because we have proved the case $\lambda=(m_1, \ldots, m_s)$ with 
$s=2, 3$ in the previous sections.

\begin{definition}[{{\it cf.} \cite[page~4]{Fulton}}]\rm
Let two Young diagrams $\lambda=(\lambda_1, \lambda_2, \ldots)$ and $\mu = (\mu_1, \mu_2, \ldots)$ satisfy $\mu_i \le \lambda_i$ for all $i$ (that is, $\mu$ is contained in $\lambda$).   
A {\it skew diagram}, or {\it skew shape}, $\lambda/\mu$ is the diagram obtained by removing $\mu$ from $\lambda$. 
A {\it standard skew tableau} on $\lambda/\mu$ is a filling of the boxes of $\lambda/\mu$ 
with the numbers $\{ 1, 2, \ldots, |\lambda|-|\mu| \}$, each appearing exactly once, 
that is strictly increasing in rows and strictly increasing in columns. We denote by 
\[
S^{\rm st}_{\lambda/\mu} = \{ \mbox{standard skew tableau on $\lambda/\mu$} \} 
\]
the set of standard skew tableaux on $\lambda/\mu$. For example, if $\lambda=(4, 2, 2, 1)$ and 
$\mu = (1, 1)$, then 
\[
{\small\begin{ytableau}
       \none & 1 & 4 & 5 \\
       \none  & 3 \\
       2  & 6\\
      7  
\end{ytableau}}. 
\]
is a standard skew tableau on $\lambda/\mu$. 
\end{definition}

\begin{definition}\rm
Let $\lambda=(m_1, m_2, \ldots, m_s)$ with $s \ge 4$ and 
$m_1\ge 4$. 
Set 
\[ 
W_1 = \left\langle 
\begin{array}{c|c} f(y) & 
y = {\small\begin{ytableau}
        1 & \ast & \cdots   \\
        2 & \cdots \\
        \vdots  & \cdots  
\end{ytableau}}  \mbox{ is a standard $\lambda$-tableau }  
\end{array}  
\right\rangle
\]   
and  
\[ 
W'_1 = \left\langle 
\begin{array}{c|c} f(y) & 
y = {\small\begin{ytableau}
        1 & 2  & \cdots \\
        \ast &  \cdots  \\
        \vdots & \cdots 
\end{ytableau}}  \mbox{ is a standard $\lambda$-tableau }  
\end{array}  
\right\rangle.
\]   
Note that $V_{\lambda} = W_1\oplus W'_1$ and that $\dim_{{\Bbb K}} V_{\lambda} = \dim_{{\Bbb K}} W_1+\dim_{{\Bbb K}} W'_1$.  
Moreover, $\dim_{{\Bbb K}} W_1=\sharp S^{\rm st}_{\lambda/(1,1)}$ and $\dim_{{\Bbb K}} W'_1=\sharp S^{\rm st}_{\lambda/(2)}$. 
\end{definition}

\begin{proposition}
Proposition~\ref{prop:main5} holds for $\lambda=(m_1, m_2, \ldots, m_s)$ with 
$s\ge 4$ and $m_1\ge 4$. In particular, $V_{\lambda}$ is not thick in this case. 
\end{proposition}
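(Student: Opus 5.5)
The plan is to mimic the two-row case. We take $W_1$ to be the subspace spanned by the standard tableaux whose first column begins with $1,2$. We need to exhibit a subspace $W_2$ such that $\sigma(W_1)\cap W_2\neq 0$ for every $\sigma\in S_n$, and then check that $\dim_{\Bbb K} W_1+\dim_{\Bbb K} W_2\le \dim_{\Bbb K} V_\lambda$.

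First observe that $W_1$ is spanned by all (not necessarily standard) tableaux $y$ with $1,2$ in the first column, in any two positions of that column. Indeed, $f(y)$ is, up to sign, unchanged under column permutations. Moreover, Garnir straightening (Theorem~\ref{th:Garnir's relation}) applied to columns other than the first never moves $1$ or $2$, provided we first sort the first column. Hence, just as in Definition~\ref{def:TijMij}, $\sigma(W_1)=M_{\sigma(1),\sigma(2)}$, where $M_{i,j}$ denotes the span of all tableaux having $i,j$ together in the first column. So it suffices that $W_2$ contain, for each pair $\{i,j\}$, a tableau with $i,j$ in its first column; the order inside the column does not matter.

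We then build $W_2$ as in Definition~\ref{def:m1m2caseW2}. It contains $W_1$ itself, which covers every pair $\{1,2\}$-type configuration and every pair $\{i,j\}$ with $i,j\ge 3$. The latter holds because one can put $i,j$ in the first column together with the needed entries and use the column-swap trick from that definition. To handle the pairs involving $1$ or $2$, we add a few further tableaux. Alternatively, and more simply, we take $W_2=\langle y_f\rangle$ for a family of $s$-subsets (or $3$-subsets placed in the first column) covering all edges, in the spirit of the function $F(n)$ of Section~\ref{section:s=3}. With this choice $\dim_{\Bbb K} W_2\le F(n)\le (n^2-4)/4$ by Corollary~\ref{cor:ineqofF}, since each covering triple $\{i,j,k\}$ can be placed at the top of the first column (here $s\ge 4\ge 3$).

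The required inequality is then $\dim_{\Bbb K} W_1+F(n)\le \dim_{\Bbb K} V_\lambda=\dim_{\Bbb K} W_1+\dim_{\Bbb K} W'_1$, that is, $\dim_{\Bbb K} W'_1=\sharp S^{\rm st}_{\lambda/(2)}\ge (n^2-4)/4$. We bound $\sharp S^{\rm st}_{\lambda/(2)}$ from below by restricting to standard $\lambda$-tableaux with $1,2$ in the first row. Since $m_1\ge 4$, we can also fix $3$ in the first row, and we can vary the entries placed at the bottoms of columns or rows. A crude lower bound comes from counting skew tableaux in which all entries except those in the first two columns are forced, or, more simply, from $\sharp S^{\rm st}_{\lambda/(2)}\ge d(\mu)$ for suitable smaller shapes $\mu$ combined with the branching rule. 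Concretely, removing the entry $n$ from a removable corner of $\lambda$ and inducting on $n$ gives a recursion $\sharp S^{\rm st}_{\lambda/(2)}=\sum_{\text{corners}}\sharp S^{\rm st}_{\lambda^-/(2)}$. We then show that this quantity grows at least quadratically in $n$. The idea is that $\lambda$ has at least $4$ rows and first row of length $\ge 4$, so it contains $(4,1,1,1)$, and Remark~\ref{rem:Rasala} applied to the shapes obtained via branching gives quadratic growth.

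The main obstacle is this counting inequality $\sharp S^{\rm st}_{\lambda/(2)}\ge (n^2-4)/4$, uniformly in $\lambda$. I would first try induction on $n$ via the corner recursion, with base cases the minimal shapes $(4,1,1,1)$ and its neighbours, checked directly. The inductive step needs the increment from $n$ to $n+1$ to be at least $(2n+1)/4$, which should follow from the fact that $\sharp S^{\rm st}_{\lambda^-/(2)}\ge n/2$ for at least one corner. If the base cases fail for a few small shapes, I would treat them by explicit tables, as was done for $(3,3,3)$ and $(4,4,4)$.
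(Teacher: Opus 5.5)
Your reduction is correct as far as it goes: $\sigma(W_1)=M_{\sigma(1),\sigma(2)}$, and the span of tableaux with a covering triple at the top of the first column does meet every $\sigma(W_1)$. Since $V_\lambda=W_1\oplus W_1'$, everything then comes down to showing $\sharp S^{\rm st}_{\lambda/(2)}\ge F(n)$ for all $\lambda$ with $s\ge 4$, $m_1\ge 4$. That inequality is the entire content of your route, and it is not proved. The corner induction you sketch fails as stated, for three reasons.
\begin{enumerate}
\item For a rectangle $\lambda=(m^s)$, which lies in the class when $m,s\ge 4$, there is only one removable corner. The recursion then gives $\sharp S^{\rm st}_{\lambda/(2)}=\sharp S^{\rm st}_{\lambda^-/(2)}$, with no increment at all.
\item When a second corner exists, it generally produces a shape outside the class (e.g.\ $(5,1,1,1)\to(5,1,1)$). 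For such shapes the bound $\ge n/2$ is only asserted, and $n/2$ would not even reach the increment $(2n+1)/4$ you need.
\item Your intermediate target $\sharp S^{\rm st}_{\lambda/(2)}\ge (n^2-4)/4$ is false already for $(4,1,1,1)$, where the count is $10<45/4$. The real target $10\ge F(7)$ does hold.
\end{enumerate}
Remark~\ref{rem:Rasala} bounds $\dim V_\nu$, not skew counts. To use it you would need something like the Pieri expansion $\sharp S^{\rm st}_{\lambda/(2)}=\sum d(\nu)$ over $\nu$ with $\lambda/\nu$ a horizontal $2$-strip, together with a separate check of small $n$. None of this is carried out.

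The missing idea is one you almost state yourself. Because the first column has at least $4$ boxes, $W_1$ contains tableaux whose first column contains $\{1,2,i,j\}$ for any $i,j\ge 3$, and also ones whose first column contains $\{1,2,j\}$. So $W_1$ already meets every $M_{i,j}$. No extra tableaux are needed for pairs involving $1$ or $2$, and you can simply take $W_2=W_1$.

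The only thing left is $2\dim W_1\le \dim V_\lambda$, i.e.\ $\sharp S^{\rm st}_{\lambda/(1,1)}\le \sharp S^{\rm st}_{\lambda/(2)}$. This can fail for $\lambda$ itself: for the hook $(4,1^{n-4})$ with $n\ge 8$ the two counts are $\binom{n-2}{3}>\binom{n-2}{2}$. That is also why your first option of enlarging $W_1$ by a few tableaux cannot work on its own. However, transposition preserves the hypotheses $s\ge 4$, $m_1\ge 4$ and swaps the two counts. So by Proposition~\ref{prop:dualrep} you may pass to ${}^t\lambda$ and assume the inequality. That is the paper's proof, and it requires no counting whatsoever.
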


\proof 
To prove the case $\lambda=(m_1, m_2, \ldots, m_s)$ with 
$s\ge 4$ and $m_1\ge 4$, it suffices to show that 
either $V_{\lambda}$ or $V_{{}^t \lambda}$ is not thick by Proposition~\ref{prop:dualrep}.  
If $\dim_{{\Bbb K}} W'_1 \le \dim_{{\Bbb K}} W_1$, then change $V_{\lambda}$ into $V_{{}^t \lambda}$. 
Since $\sharp S^{\rm st}_{\lambda/(1,1)}=\sharp S^{\rm st}_{{}^t \lambda/(2)}$ and 
$\sharp S^{\rm st}_{\lambda/(2)} = \sharp S^{\rm st}_{{}^t \lambda/(1,1)}$, we may assume that 
$\dim_{{\Bbb K}} W_1 \le \dim_{{\Bbb K}} W'_1$ from the beginning. Then we have $2 \dim_{{\Bbb K}} W_1 \le \dim_{{\Bbb K}} V_{\lambda}$. 

By the Garnir relation (Theorem~\ref{th:Garnir's relation}), we see that 
\[
W_1 = \left\langle 
\begin{array}{c|c} f(y) & 
y = {\small\begin{ytableau}
        1 & \ast & \cdots   \\
        2 & \cdots \\
        \vdots  & \cdots  
\end{ytableau}}  \mbox{ is a (not necessarily standard) $\lambda$-tableau }  
\end{array}  
\right\rangle.   
\]
In particular, for any $3 \le i < j \le n$, there exists a $\lambda$-tableau $y$ 
whose first column contains $\{1, 2, i, j\}$ such that $f(y) \in W_1$.  
For $\sigma \in S_n$,  
\[
\ytableausetup{boxsize=2.2em}
\sigma(W_1) = \left\langle 
\begin{array}{c|c} f(y) & 
y = {\small\begin{ytableau}
        \sigma(1) & \ast & \cdots   \\
        \sigma(2) & \cdots \\
        \vdots  & \cdots  
\end{ytableau}}  \mbox{ is a (not nec. standard) $\lambda$-tableau }  
\end{array}  
\right\rangle.   
\]
We easily verify that there exists a $\lambda$-tableau $y$ whose first 
column contains $\{ \sigma(1), \sigma(2) \}$ such that $f(y) \in W_1$. 
Hence, $\sigma(W_1)\cap W_1\neq 0$. 

Setting $W_2=W_1$, we see that $\sigma(W_1)\cap W_2\neq 0$ for any $\sigma \in S_n$ and 
$\dim_{{\Bbb K}} W_1+\dim_{{\Bbb K}} W_2\le \dim_{{\Bbb K}} V_{\lambda}$, which implies that 
Proposition~\ref{prop:main5} holds in 
the case $\lambda=(m_1, m_2, \ldots, m_s)$ with $s\ge 4$ and $m_1\ge 4$.  
\qed

\bigskip 

Summarizing the discussion above, we have 

\begin{proposition}
In the case  $\lambda=(m_1, m_2, \ldots, m_s)$ with 
$s\ge 4$ and $m_1\ge 4$, set $d_0=\min\{ \sharp S^{\rm st}_{\lambda/(1,1)}, \sharp S^{\rm st}_{\lambda/(2)}\}$. Then 
$V_{\lambda}$ is not $(d_0, d_0)$-thick. In particular, it is not $m$-thick for 
any $d_0 \le m \le d(\lambda)-d_0$. 
\end{proposition}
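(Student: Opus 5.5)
The argument reuses the construction from the preceding proof, keeping track of which of $\lambda$ and ${}^t\lambda$ carries the smaller piece. Suppose first that $\sharp S^{\rm st}_{\lambda/(1,1)} \le \sharp S^{\rm st}_{\lambda/(2)}$. Then $d_0=\dim_{{\Bbb K}} W_1$.

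Put $W_2=W_1$. The previous proof shows, via the Garnir relation, that $W_1$ is spanned by all (not necessarily standard) $\lambda$-tableaux whose first column begins with $1,2$. Hence $\sigma(W_1)$ is spanned by the tableaux whose first column begins with $\sigma(1),\sigma(2)$. Since $s\ge 4$, the first column has at least four boxes. So, using column permutations and sign, we can place $1,2,\sigma(1),\sigma(2)$ in the first column of a single tableau $y$. Then $f(y)$, up to sign, lies in $W_1\cap\sigma(W_1)$. This gives $\sigma(W_1)\cap W_2\neq 0$ for every $\sigma\in S_n$.

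Next we check the dimension condition. We have $\dim W_1+\dim W_2 = 2d_0 \le \dim W_1+\dim W'_1 = d(\lambda)$, so $(d_0,d_0)\in\Delta(d(\lambda))$. Therefore $V_\lambda$ is not $(d_0,d_0)$-thick.

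In the other case, $\sharp S^{\rm st}_{\lambda/(2)} < \sharp S^{\rm st}_{\lambda/(1,1)}$. Since $\sharp S^{\rm st}_{\lambda/(2)}=\sharp S^{\rm st}_{{}^t\lambda/(1,1)}$, the same argument applies to $V_{{}^t\lambda}$. Here we need ${}^t\lambda$ to have at least four rows, which holds because $m_1\ge 4$. So $V_{{}^t\lambda}$ is not $(d_0,d_0)$-thick. Now $V_\lambda\cong V_{{}^t\lambda}\otimes V_{(1^n)}$, and tensoring with a character leaves the set of subspaces and the orbits $\sigma(W)$ unchanged. The witnessing subspaces for $V_{{}^t\lambda}$ therefore transfer to $V_\lambda$, as in Proposition~\ref{prop:dualrep}. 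In both cases $V_\lambda$ is not $(d_0,d_0)$-thick.

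The statement about $m$-thickness follows from Proposition~\ref{prop:ijthick} with $i=j=d_0$. Both ranges $\min\{i,j\}\le m\le n-\max\{i,j\}$ and $\max\{i,j\}\le m\le n-\min\{i,j\}$ in that proposition collapse to $d_0\le m\le d(\lambda)-d_0$. So $V_\lambda$ is not $m$-thick for any such $m$.

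The only point needing care is the intersection step. Both $1,2$ and $\sigma(1),\sigma(2)$ must fit into one column, possibly with overlaps, and this is where the column length at least $4$ is used. For ${}^t\lambda$ that column length is $m_1\ge 4$. Everything else is bookkeeping already done in the preceding proof.
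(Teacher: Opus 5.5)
Your proposal is correct and follows essentially the same route as the paper. The paper's proof just invokes Proposition~\ref{prop:ijthick} on top of the preceding proof, which takes $W_2=W_1$ (the span of tableaux with $1,2$ in the first column), passes to ${}^t\lambda$ when $\dim W'_1<\dim W_1$, and uses a column of length at least $4$ to fit $1,2,\sigma(1),\sigma(2)$ together. You additionally make explicit two points the paper leaves implicit: that ${}^t\lambda$ has $m_1\ge 4$ rows, and that twisting by the sign character preserves $(i,j)$-thickness, not only $m$-thickness.
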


\proof 
The latter part follows from Proposition~\ref{prop:ijthick}.  
\qed 

\bigskip

Thus, Proposition~\ref{prop:main5} has been proved in Case~3. 
By the arguments in Sections~\ref{section:s=2}--\ref{section:sge4}, we have proved 
Theorem~\ref{th:main4} and Proposition~\ref{prop:main5}, which implies Theorems~\ref{th:main1} and \ref{th:main2}.

\end{document}